\newtheorem{lem}{Lemma}[section]
\newtheorem{cor}[lem]{Corollary}
\newtheorem{prop}[lem]{Proposition}
\newtheorem{thm}[lem]{Theorem}
\newtheorem{Defn}[lem]{Definition}
\newtheorem{Ex}[lem]{Example}
\newtheorem{Question}[lem]{Question}
\newtheorem{Property}[lem]{Property}
\newtheorem{Properties}[lem]{Properties}
\newtheorem{Discussion}[lem]{Remark}
\newtheorem{Construction}[lem]{Construction}
\newtheorem{Notation}[lem]{Notation}
\newtheorem{Fact}[lem]{Fact}
\newtheorem{Notationdefinition}[lem]{Definition/Notation}
\newtheorem{Remarkdefinition}[lem]{Remark/Definition}
\newtheorem{Subprops}{}[lem]
\newtheorem{Para}[lem]{}
\newenvironment{defn}{\begin{Defn}\rm}{\end{Defn}}
\newenvironment{ex}{\begin{Ex}\rm}{\end{Ex}}
\newenvironment{question}{\begin{Question}\rm}{\end{Question}}
\newenvironment{disc}{\begin{Discussion}\rm}{\end{Discussion}}
\newcommand{\cat}[1]{\mathcal{#1}}
\newcommand{\catd}{\mathsf{D}}
\newcommand{\cata}{\cat{A}}
\newcommand{\catb}{\cat{B}}
\newcommand{\pd}{\operatorname{pd}}
\newcommand{\depth}{\operatorname{depth}}	
\newcommand{\rank}{\operatorname{rank}}	
\newcommand{\edim}{\operatorname{edim}}
\newcommand{\ann}{\operatorname{Ann}}
\newcommand{\len}{\operatorname{length}}
\newcommand{\grade}{\operatorname{grade}}
\newcommand{\ext}{\operatorname{Ext}}	
\newcommand{\lotimes}{\otimes^{\mathbf{L}}}
\newcommand{\HH}{\operatorname{H}}
\newcommand{\Hom}{\operatorname{Hom}}	
\newcommand{\coker}{\operatorname{Coker}}
\newcommand{\tor}{\operatorname{Tor}}
\newcommand{\im}{\operatorname{Im}}
\newcommand{\Ker}{\operatorname{Ker}}
\newcommand{\ideal}[1]{\mathfrak{#1}}
\newcommand{\m}{\ideal{m}}
\newcommand{\n}{\ideal{n}}
\newcommand{\p}{\ideal{p}}
\newcommand{\fa}{\ideal{a}}
\newcommand{\fr}{\ideal{r}}
\newcommand{\comp}[1]{\widehat{#1}}
\newcommand{\ol}{\overline}
\newcommand{\xra}{\xrightarrow}
\newcommand{\vf}{\varphi}
\newcommand{\x}{\mathbf{x}}
\renewcommand{\geq}{\geqslant}
\renewcommand{\leq}{\leqslant}
\renewcommand{\ker}{\Ker}
\renewcommand{\hom}{\Hom}
\newcommand{\fm}{\mathfrak{m}}
\newcommand{\fn}{\mathfrak{n}}
\newcommand{\om}{\omega_{R}}
\newcommand{\Ext}{\operatorname{Ext}}
\newcommand{\Tor}{\operatorname{Tor}}
\numberwithin{equation}{lem}
\begin{document}

\bibliographystyle{amsplain}

\title
{Rings that are homologically of minimal multiplicity}

\author{Keivan Borna}

\address{Keivan Borna: Faculty of Mathematical Sciences and Computer, Tarbiat Moallem University, Tehran,
Iran and School of Mathematics, Institute for
Research in Fundamental Sciences (IPM), P.O. Box: 19395-5746, Tehran, Iran.}

\email{borna@ipm.ir}

\urladdr{http://www.ipm.ac.ir/personalinfo.jsp?PeopleCode=IP0500002}

\thanks{K. Borna was supported in part by a grant from
IPM (No. 88130035).}

\author{Sean Sather-Wagstaff}

\address{Sean Sather-Wagstaff:
Department of Mathematics,
NDSU Dept \# 2750,
PO Box 6050,
Fargo, ND 58108-6050
USA.}

\email{Sean.Sather-Wagstaff@ndsu.edu}

\urladdr{http://www.ndsu.edu/pubweb/\~{}ssatherw/}

\thanks{S. Sather-Wagstaff was supported in part by a grant from
the NSA}

\author{Siamak Yassemi}

\address{Siamak Yassemi: Department of Mathematics, University of
Tehran, Tehran, Iran and School of Mathematics, Institute for
Research in Fundamental Sciences (IPM), Tehran, Iran.}

\email{yassemi@ipm.ir}

\urladdr{http://math.ipm.ac.ir/yassemi}

\thanks{S. Yassemi was supported in part by a grant from
IPM (No. 88130214).}

\keywords{Betti numbers, canonical module, Gorenstein rings,
minimal multiplicity.}

\subjclass[2000]{13D07, 13D02, 13H10}

\date{\today}

\begin{abstract}
Let $R$ be a local Cohen-Macaulay ring with canonical module
$\om$. We investigate the following question of Huneke: If
the sequence of Betti numbers $\{\beta_i^R(\om)\}$ has polynomial growth,
must $R$ be  Gorenstein? This question is well-understood when $R$ has
minimal multiplicity. We investigate this question for a 
more general class of rings which we say are homologically
of  minimal multiplicity.
We provide several characterizations of the rings in this class and
establish a general ascent and descent result.
\end{abstract}

\maketitle

\section{Introduction} \label{sec0}

Throughout this paper $(R,\m,k)$ is a commutative local noetherian ring. 
Recall that a finitely generated $R$-module $\om$ is a \emph{canonical
module} for $R$ if 
$$\ext^i_R(k,\om)\cong\begin{cases}
k & \text{if $i=\dim(R)$} \\ 0 & \text{if $i\neq\dim(R)$.}
\end{cases}$$
In some of the literature, canonical modules are also called dualizing modules.
They were introduced by Grothendieck~\cite{hartshorne:lc}
for the study of local cohomology.
Foxby~\cite{foxby:gmarm}, 
Reiten~\cite{reiten:ctsgm} and 
Sharp~\cite{sharp:gmccmlr} prove that 
$R$ admits a canonical module if and only if $R$ is Cohen-Macaulay
and a homomorphic image of a local Gorenstein ring. In particular, if $R$ is
complete and Cohen-Macaulay, then it admits a canonical module. 

One useful property is the following:
The ring $R$ is Gorenstein if and only if $R$ is its own canonical module.
This leads to  the following question of Huneke.\footnote{To the best of our knowledge,
Huneke has only posed this question in conversations and talks, not in print.}

\begin{question} \label{q01}
Assume that $R$ is Cohen-Macaulay with canonical module $\om$.
If the sequence of Betti numbers $\{\beta_i^R(\om)\}$ is bounded above by
a polynomial in $i$, must $R$ be Gorenstein?
\end{question}

For rings of minimal multiplicity, it is straightforward to answer 
this question: reduce to the case where
$\m^2=0$ and show that $\beta_i^R(\om)=(r^2-1)r^{i-1}$ for all $i\geq 1$;
here $r$ is the Cohen-Macaulay type of $R$.
(See Example~\ref{ex11} below.) 
This question has been answered in the affirmative 
for other classes of rings by
Jorgensen and Leuschke~\cite{jorgensen:gbscm}
and Christensen, Striuli and Veliche~\cite{christensen:gmirlr}.
These classes include the classes of Golod rings, rings with codimension at most 3, rings that are
one link from a complete intersection, rings with $\m^3=0$, 
Teter rings, and nontrivial fiber product rings.

In this paper, we investigate Question~\ref{q01} for the following classes of rings
which contain the rings of minimal multiplicity: 

\begin{defn}\label{defn01}
Let $m$, $n$ and $t$ be integers with $m,t\geq 1$ and $n\geq 0$.
The ring $R$ is \emph{homologically of minimal multiplicity 
of type $(m,n,t)$}
if there exists a 
local ring homomorphism 
$\vf\colon(R,\fm,k)\to (S,\fn,l)$ and a finitely generated $S$-module $M\neq 0$
such that 
\begin{enumerate}[\quad(1)]
\item\label{defn01a}
the ring $S$ has a canonical module $\omega_{S}$,
\item\label{defn01b}
the map $\vf$ is flat with Gorenstein closed fibre $S/\m S$,
\item\label{defn01c}
one has 
$\tor^S_i(\omega_S,M)=0$ for $i\geq t$, and
\item \label{defn01d}
one has $\n^2M=0$ and $m=\beta^S_0(M)$ and $n=\beta^S_0(\n M)$.
\end{enumerate}
The ring $R$ is \emph{strongly homologically of minimal multiplicity 
of type $(m,n)$}
if there exists a 
local ring homomorphism 
$\vf\colon(R,\fm,k)\to (S,\fn,l)$ and a finitely generated $S$-module $M\neq 0$
satisfying conditions~\eqref{defn01a}, \eqref{defn01b}, \eqref{defn01d}, and
\begin{enumerate}[\quad(3')]
\item\label{defn01'c}
the $S$-module $M$ is in the Auslander class $\cata(S)$.
\end{enumerate}
(Consult Section~\ref{sec4} for background information on Auslander classes.)
\end{defn}

The following facts are proved in Section~\ref{sec4}.
If $R$ is Cohen-Macaulay and has minimal multiplicity, then it is
strongly homologically of minimal multiplicity of type $(1,e(R)-1)$; here $e(R)$ is the 
Hilbert-Samuel multiplicity of $R$ with respect to $\m$. 
If $R$ is Gorenstein, then it is
strongly homologically of minimal multiplicity of type $(m,n)$ for all integers $m,n\geq 1$.
Also, if $R$ is homologically of minimal multiplicity, then it is Cohen-Macaulay.

We provide an affirmative answer to Question~\ref{q01} for 
rings that are strongly  homologically of minimal multiplicity
in
the following result, which is contained in Theorems~\ref{para11} and~\ref{para11s}.

\begin{thm}\label{thm01} 
Assume that $R$ is homologically of minimal multiplicity of type 
$(m,n,t)$ and with canonical module $\om$.
\begin{enumerate}[\quad\rm(a)]
\item\label{thm01a} 
One has
$\beta_{t+s}^{R}(\om)=(n/m)^{s}\cdot \beta_{t}^{R}(\om)$
for all $s\geq 0$.
\item\label{thm01b}  
If $n > m$ and $R$ is not Gorenstein, then the sequence $\{\beta_{i}^{R}(\om)\}$ grows
exponentially.
\item\label{thm01c}  
If $n = m$, then the sequence $\{\beta_{i}^{R}(\om)\}$ is eventually
constant.
\item\label{thm01d}  
If $n < m$, then $R$ is Gorenstein.
\item\label{thm01e}  
If $R$ is not Gorenstein, then $m\mid n$.
\item\label{thm01f}  
If $R$ is strongly homologically of minimal multiplicity of type 
$(m,n)$ and
$n = m$, then $R$ is Gorenstein.
\end{enumerate}
\end{thm}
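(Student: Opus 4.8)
The plan is to transport the statement to the ring $S$, where $M$ lives, and to read everything off from two long exact sequences: one of $\tor$'s for parts~(a)--(e), and one of $\ext$'s for part~(f). First I would record three facts about the flat local map $\vf\colon R\to S$ with Gorenstein closed fibre. (i) The ring $S$ is Cohen--Macaulay and $\omega_S\cong S\otimes_R\om$, so applying $S\otimes_R-$ to a minimal free resolution of $\om$ over $R$ yields a minimal free resolution of $\omega_S$ over $S$ (its differentials have entries in $\m S\subseteq\n$); hence $\beta_i^R(\om)=\beta_i^S(\omega_S)=:b_i$ for all $i$. (ii) $R$ is Gorenstein if and only if $S$ is. (iii) Since $\omega_S$ is a maximal Cohen--Macaulay $S$-module, Auslander--Buchsbaum shows that $b_i=0$ for some $i$ forces $\omega_S$ free, i.e. $S$ --- hence $R$ --- Gorenstein; so if $R$ is not Gorenstein then $b_i\geq 1$ for all $i$. (Recall also that $R$ is Cohen--Macaulay.) The minimality in (i) moreover gives $\tor_i^S(\omega_S,l)\cong l^{b_i}$ and $\ext^i_S(\omega_S,l)\cong l^{b_i}$ for every $i$, which I use throughout.

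For part~(a): since $\n^2M=0$, the modules $\n M$ and $M/\n M$ are $l$-vector spaces of dimensions $n=\beta_0^S(\n M)$ and $m=\beta_0^S(M)$, so there is an exact sequence $0\to l^n\to M\to l^m\to 0$ of $S$-modules. Applying $\omega_S\lotimes_S-$ and invoking $\tor_i^S(\omega_S,M)=0$ for $i\geq t$, the long exact sequence collapses --- for each $i\geq t+1$, where $\tor_i^S(\omega_S,M)$ and $\tor_{i-1}^S(\omega_S,M)$ both vanish --- to $0\to l^{mb_i}\to l^{nb_{i-1}}\to 0$; hence $mb_i=nb_{i-1}$, i.e. $b_{t+s}=(n/m)^sb_t$ for all $s\geq 0$, which is part~(a). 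Parts~(b)--(e) are then formal. If $R$ is not Gorenstein, $b_i\geq 1$ for all $i$ by~(iii): then $n>m$ gives $b_{t+s}\geq(n/m)^s$, i.e. exponential growth --- part~(b); $n=m$ gives $b_{t+s}=b_t$ --- part~(c); when $n<m$ the nonnegative integers $b_{t+s}=(n/m)^sb_t$ tend to $0$, hence are eventually $0$, so $R$ is Gorenstein by~(iii) --- part~(d); and for part~(e), if $R$ is not Gorenstein then $n\geq 1$ and, writing $n/m=p/q$ in lowest terms, $b_{t+s}=(p/q)^sb_t\in\zz$ for all $s$ forces $q^s\mid b_t$ for all $s$, so $q=1$ and $m\mid n$.

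For part~(f): now $M\in\cata(S)$, so $\tor_i^S(\omega_S,M)=0$ for all $i>0$ and $R$ is homologically of minimal multiplicity of type $(m,n,1)$; thus parts~(a)--(e) apply with $t=1$, and, supposing for contradiction that $R$ (equivalently $S$) is not Gorenstein, (iii) gives $b_i\geq 1$ for all $i$ and (c) gives $\beta:=b_1=b_2=\cdots\geq 1$. By Foxby equivalence, $N:=\omega_S\otimes_SM=\omega_S\lotimes_SM$ lies in the Bass class $\catb(S)$, so $\ext^i_S(\omega_S,N)=0$ for $i>0$. Since $M\otimes_Sl\cong l^m$ one has $\beta_0^S(N)=\dim_l(\omega_S\otimes_Sl^m)=rm$, where $r:=\beta_0^S(\omega_S)$ is the Cohen--Macaulay type of $S$; and the piece $0\to l^{mb_1}\to l^{rn}\to N\to l^{rm}\to 0$ of the $\tor$ long exact sequence above, together with $n=m$ and the computation of $\beta_0^S(N)$, shows $\n N\cong l^{m(r-\beta)}$ and $\n^2N=0$. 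Applying $\rhom_S(\omega_S,-)$ to $0\to\n N\to N\to N/\n N\to 0$ and using $\ext^i_S(\omega_S,N)=0$ for $i>0$, the long exact sequence collapses, for each $i\geq 1$, to $0\to l^{rmb_i}\to l^{m(r-\beta)b_{i+1}}\to 0$, so $rb_i=(r-\beta)b_{i+1}$ for $i\geq 1$. Taking $i=1$ and using $b_1=b_2=\beta$ gives $r\beta=(r-\beta)\beta$, forcing $\beta=0$ --- contradicting $\beta\geq 1$. Hence $R$ is Gorenstein, which is part~(f).

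The main obstacle is part~(f): one must pair the $\tor$-side recursion from~(a) with the \emph{dual} $\ext$-side recursion coming from the Bass-class membership of $\omega_S\otimes_SM$, which requires identifying $\beta_0^S(N)$ and the structure of $\n N$ precisely enough --- via the $\tor$ long exact sequence --- that the two recursions together pin $\beta$ to $0$. Recognizing that the Foxby-equivalence input is exactly what upgrades ``eventually constant Betti numbers'' (part~(c)) to ``Gorenstein'' is the heart of the matter; parts~(a)--(e) are comparatively routine bookkeeping with one long exact sequence and elementary divisibility.
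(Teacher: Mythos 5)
Your proposal is correct and follows essentially the same route as the paper: transport the Betti numbers of $\om$ across the flat Gorenstein map to $\beta_i^S(\omega_S)$, extract the recursion $m\beta_{i+1}^S(\omega_S)=n\beta_i^S(\omega_S)$ from the $\tor$ long exact sequence of $0\to l^n\to M\to l^m\to 0$ (the paper's Theorem~\ref{thm11} and Corollary~\ref{cor11}), and for part~(f) play this off against the dual $\ext$ recursion coming from $\ext_S^i(\omega_S,\omega_S\otimes_SM)=0$, which follows directly from $M\in\cata(S)$. The only organizational difference is in~(f): the paper first derives a general closed-form identity $\beta_1^S(\omega_S)=\beta_0^S(\omega_S)(r^2-1)/r$ (Theorem~\ref{prop13}\eqref{prop13b}, Corollary~\ref{prop13z}) valid for any $r=n/m>0$, and then specializes to $r=1$; you instead specialize to $r=1$ before running the $\ext$ side, extracting $b_0\beta=(b_0-\beta)\beta$ directly and concluding $\beta^2=0$. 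The mechanism --- comparing the two Betti-number recursions forced by the Auslander class membership --- is identical, and the underlying computations agree; the paper's version just also buys the explicit formula used later in Example~\ref{blah}.
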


Section~\ref{sec1} also contains further analysis of the behavior of the 
Betti numbers under various hypotheses.
While this investigation is motivated by questions about the Betti numbers 
of  canonical modules, our
methods yield results about  Betti numbers
of arbitrary modules. 
For instance, Theorem~\ref{thm01}\eqref{thm01f} is essentially a special case of
Theorem~\ref{prop13}\eqref{prop13d}. 
Accordingly, we state and prove these more general results,
and periodically give explicit specializations to the case of 
rings that are (strongly) homologically of minimal multiplicity. 

Section~\ref{sec3} contains three alternate characterizations of the rings
that are homologically of minimal multiplicity. One of them,
Theorem~\ref{disc11d}, states that, 
if $R$ is homologically of minimal multiplicity, then one can assume in Definition~\ref{defn01}
that the homomorphism $\vf$ is flat with regular closed fibre and that the ring $S$
is complete with algebraically closed residue field.
The second, Theorem~\ref{thm51} shows that $R$ is homologically of minimal
multiplicity whenever there is a ``quasi-Gorenstein'' homomorphism $R\to S$
satisfying conditions~\eqref{defn01a}, \eqref{defn01c} and~\eqref{defn01d}
of Definition~\ref{defn01}. (Definition~\ref{defn51} contains 
background information on quasi-Gorenstein homomorphisms.)
The third characterization is dual to the original definition, using Ext-vanishing
in place of Tor-vanishing; see Remark~\ref{disc13} and Proposition~\ref{disc11c}.
Similar characterizations are given for rings that are 
strongly homologically of minimal multiplicity.

Finally, Section~\ref{sec9} documents ascent and descent behavior for these classes of rings.
The most general statements are contained in Corollaries~\ref{prop41}
and~\ref{prop41x}.
The result for flat maps is given here; see Theorems~\ref{lem43} and~\ref{lem43x}.

\begin{thm} \label{thm02}
Assume that $\psi\colon R\to R'$ is a flat local ring
homomorphism with Gorenstein closed fibre $R'/\m R'$. 
If  $R'$
is (strongly) homologically of minimal multiplicity,
then so is $R$. The converse holds when 
$k$ is perfect and $R'/\m R'$ is regular.
\end{thm}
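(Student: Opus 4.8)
The theorem breaks into a descent statement — if $R'$ is homologically of minimal multiplicity then so is $R$ — and an ascent statement, the converse, under the added hypotheses that $k$ is perfect and $R'/\m R'$ is regular. The plan for descent is simply to compose. Suppose $R'$ is homologically of minimal multiplicity of type $(m,n,t)$, witnessed by a flat local homomorphism $\vf'\colon R'\to S'$ with Gorenstein closed fibre, a canonical module $\omega_{S'}$, and an $S'$-module $M'$ as in Definition~\ref{defn01}. I would check that $\vf'\circ\psi\colon R\to S'$, with the same $\omega_{S'}$ and $M'$, witnesses the same for $R$: the composite is flat and local; its closed fibre $S'/\m S'$ is local (a quotient of $S'$) and is flat over the Gorenstein ring $R'/\m R'$ with closed fibre $S'/\m' S'$ (where $\m'$ is the maximal ideal of $R'$), which is Gorenstein, so $S'/\m S'$ is Gorenstein by the usual ascent of the Gorenstein property along a flat local map with Gorenstein closed fibre; and conditions~\eqref{defn01a}, \eqref{defn01c}, \eqref{defn01d} of Definition~\ref{defn01} (and~\eqref{defn01'c} in the strong case) mention only $S'$ and $M'$, so they are untouched. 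Hence $R$ is homologically of minimal multiplicity (resp. strongly so) of the same type.

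For ascent, suppose $R$ is homologically of minimal multiplicity of type $(m,n,t)$, witnessed by $\vf\colon R\to S$ and an $S$-module $M$, and assume $k$ is perfect and $R'/\m R'$ is regular. The plan is to base change $S$ along $\psi$ and then fix up the resulting module. By Theorem~\ref{disc11d} I may take $S$ complete; then $S\otimes_R R'$, completed along the ideal generated by $\n$ and $\m'$ to keep it Noetherian, is a semilocal ring flat over both $S$ and $R'$. Since $(S/\n)\otimes_k(R'/\m')\neq 0$, I would choose a maximal ideal $\q$ of it lying over both $\n$ and $\m'$ and set $S':=(S\otimes_R R')_{\q}$, so that $R'\to S'$ and $S\to S'$ are flat and local. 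The closed fibre of $R'\to S'$ is a localization of $(S/\m S)\otimes_k(R'/\m')$, which is flat over the Gorenstein ring $S/\m S$; as $k$ is perfect, the residue field $S/\n$ is separable over $k$, so $(S/\n)\otimes_k(R'/\m')$ is a regular ring, and it follows that $S'/\m' S'$ is Gorenstein. Likewise $F:=S'/\n S'$, the closed fibre of $S\to S'$, is a localization of the regular ring $(S/\n)\otimes_k(R'/\m R')$ (again by separability, using that $R'/\m R'$ is regular), hence a regular local ring. Since $R$, and therefore $S$, is Cohen--Macaulay, $S'$ is Cohen--Macaulay, and $\omega_{S'}:=\omega_S\otimes_S S'$ is a canonical module for $S'$.

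The naive choice $M\otimes_S S'$ of module over $S'$ does not work, because the square of the maximal ideal of $S'$ fails to annihilate it once $\dim F>0$; I would collapse $F$. Lift a regular system of parameters of $F$ to a sequence $\x$ in the maximal ideal of $S'$. By flatness of $S\to S'$, $\x$ is a regular sequence on $S'$, on the maximal Cohen--Macaulay module $\omega_{S'}$, and on $M\otimes_S S'$, and $T:=S'/(\x)S'$ is flat over $S$ with closed fibre the residue field of $S'$. Set $M'':=M\otimes_S T$, regarded as an $S'$-module via $S'\onto T$; it is a nonzero finitely generated $S'$-module. Since the maximal ideal of $T$ equals $\n T$, the square of the maximal ideal of $S'$ kills $M''$, because $(\n T)^2 M''=(\n^2 M)\otimes_S T=0$; and flatness of $S\to T$ gives $\beta^{S'}_0(M'')=\beta^S_0(M)=m$ and, likewise, $\beta^{S'}_0(\n' M'')=\beta^S_0(\n M)=n$, where $\n'$ is the maximal ideal of $S'$. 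Finally, since $\x$ is $\omega_{S'}$-regular one has $\omega_{S'}\lotimes_{S'}T\simeq\omega_T=\omega_S\otimes_S T$, so change of rings along $S'\onto T$ together with flat base change along $S\to T$ gives $\tor^{S'}_i(\omega_{S'},M'')\cong\tor^T_i(\omega_T,M\otimes_S T)\cong\tor^S_i(\omega_S,M)\otimes_S T$, which vanishes for $i\ge t$. Thus $R'\to S'$, $\omega_{S'}$ and $M''$ witness that $R'$ is homologically of minimal multiplicity of type $(m,n,t)$. For the strong version I would instead use that $M\in\cata(S)$ forces $M\otimes_S T\in\cata(T)$ (flat base change of the Auslander class) and hence $M''\in\cata(S')$, since $T$ is a quotient of $S'$ by a regular sequence.

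The hard part is this ascent construction, and in particular producing $M''$: making it annihilated by the square of the maximal ideal without destroying the $\omega_S$-Tor-vanishing (or Auslander-class) condition. The two extra hypotheses are used precisely here — regularity of $R'/\m R'$ so that $F$ is regular and the cutting sequence $\x$ exists with the needed regularity properties, and perfectness of $k$ so that the residue field extension in play is separable, making the fibres above regular (hence Gorenstein) and endowing $S'$ with a canonical module. The single genuinely non-formal point is the verification that passing to $T=S'/(\x)S'$ preserves the Tor condition, i.e. the displayed chain of isomorphisms.
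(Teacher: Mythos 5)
Your descent argument is exactly the paper's: compose $\vf'\circ\psi$ and use that a composition of flat local maps with Gorenstein closed fibres is again flat with Gorenstein closed fibre, keeping the same $M'$. No issues there.

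Your ascent argument takes a genuinely different route than the paper. The paper proceeds in two steps: first it cuts $R'$ by a sequence $\x\subset\m'$ lifting a regular system of parameters of $R'/\m R'$, so that the closed fibre becomes the field $k'$, and invokes Lemma~\ref{lem42} (descent along quotients by a regular sequence) to reduce to that case; then, with $R'/\m R'$ a field, it enlarges residue fields with Grothendieck's gonflements (Remark~\ref{disc15}), writes down two Cohen factorizations of one and the same map $R\to k''$, and uses the Avramov--Foxby--Herzog comparison theorem \cite[(1.7)]{avramov:solh} (this is where perfectness of $k$, via separability of $k\to k''$, enters) to produce a flat local map $R''\to S_3$ with regular closed fibre that witnesses the condition for $R''$, and finally descends from $R''$ to $R'$. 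You instead try to construct the witness $S'$ for $R'$ directly as a (localized, completed) pushout $S\otimes_R R'$, and afterwards lower the excess fibre dimension by passing to $T=S'/(\x)$; your change-of-rings computation $\tor^{S'}_i(\omega_{S'},M'')\cong\tor^T_i(\omega_T,M\otimes_ST)\cong\tor^S_i(\omega_S,M)\otimes_ST$, and the (squared-max-ideal, Betti-number, Auslander-class) bookkeeping, are correct once $S'$ exists with the stated properties.

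The gap is precisely in producing $S'$. You say ``$S\otimes_R R'$, completed along the ideal generated by $\n$ and $\m'$ to keep it Noetherian, is a semilocal ring flat over both $S$ and $R'$.'' This is not something one may simply assert: $S\otimes_R R'$ is in general not Noetherian (both $S$ and $R'$ can have enormous residue field extensions of $k$), and its $I$-adic completion for a non-Noetherian $A$ need not be Noetherian either; moreover after completing you would still need to check flatness over both $S$ and $R'$, that a localization at $\q$ is a Noetherian local ring, that it is CM and admits a canonical module (i.e.\ is a quotient of a Gorenstein ring), and that the two closed fibres have the regularity/Gorenstein features you need. None of this is automatic, and it is exactly the technical obstruction the paper routes around with Cohen factorizations: Grothendieck's gonflements (Remark~\ref{disc15}) are guaranteed to be Noetherian, complete, and flat with trivial closed fibre, and the comparison theorem for Cohen factorizations supplies the missing arrow $R''\to S_3$ without ever invoking a tensor-product construction. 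So the place you identify as ``the hard part'' (keeping the Tor condition after killing the fibre dimension) is actually the easy, correct part of your argument; the hard part, which you treat as routine, is the existence of the intermediate ring $S'$.
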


Example~\ref{ex9454} shows that the converse statement can  fail
when $R'/\m R'$ is only assumed to be of minimal multiplicity.
It also shows that, in general, the localized tensor product of 
rings that are strongly homologically of minimal multiplicity
need not be homologically of minimal multiplicity.
On the other hand, we do not know at this time whether this 
class of rings is closed under localization. See Section~\ref{sec9}
for other open problems.

\section{Basic Properties}\label{sec4}

In this section we make some observations about
rings that are (strongly) homologically of minimal multiplicity.
We  begin with a definition that is due to Foxby.

\begin{defn}\label{defn11}
Let $S$ be a Cohen-Macaulay local ring with canonical module $\omega_S$.
The \emph{Auslander class} of $S$ is the class $\cata(S)$ consisting of all
$R$-modules $M$ 
satisfying the following conditions:
\begin{enumerate}[\quad(1)]
\item
the natural map $\xi_M\colon M\to\Hom_S(\omega_S,\omega_S\otimes_SM)$
given by $\xi_M(m)(x)=x\otimes m$ is an isomorphism, and
\item
one has
$\tor^S_i(\omega_S,M)=0=\ext^i_S(\omega_S,\omega_S\otimes_SM)$ for all $i\geq 1$.
\end{enumerate}
The \emph{Bass class} of $S$ is the class $\catb(S)$ consisting of all
$R$-modules $M$ 
satisfying the following conditions:
\begin{enumerate}[\quad(1)]
\item
the natural map $\gamma_M\colon \omega_S \otimes_S\Hom_S(\omega_S ,M)\to M$
given by $\gamma_M(x\otimes\psi)=\psi(x)$ is an isomorphism, and
\item
one has
$\ext_S^i(\omega_S,M)=0=\tor_i^S(\omega_S,\Hom_S(\omega_S,M))$ 
for all $i\geq 1$.
\end{enumerate}
\end{defn}

Here are some straightforward facts about Auslander classes.

\begin{disc}\label{disc11}
Let $S$ be a Cohen-Macaulay local ring with canonical module $\omega_S$.
The Auslander class $\cata(S)$
contains every projective $S$-module. Furthermore, if two modules in a 
short exact sequence are in $\cata(S)$, then so is the third module.
It follows that $\cata(S)$
contains every $S$-module of finite projective dimension.

From the definitions, we conclude that rings that are
strongly homologically of minimal multiplicity of type $(m,n)$
are homologically of minimal multiplicity of type $(m,n,1)$.
Also, with $\vf$ and $M$ as in Definition~\ref{defn01},
the condition $n\geq 1$ implies that $\n M\neq 0$.
\end{disc}

For the sake of clarity, we recall the definition of minimal multiplicity,
first studied by Abhyankar~\cite{abhyankar:lrhed}.

\begin{defn}\label{defn12}
Let $(R,\m)$ be a local ring. 
The \emph{Hilbert-Samuel multiplicity} of $R$, denoted $e(R)$, is the normalized leading
coefficient of the polynomial that agrees with the function $\len_R(R/\m^{n})$ for $n\gg 0$.
If $R$ is Cohen-Macaulay, then there is an inequality $e(R)\geq \beta^R_0(\m)-\dim(R)+1$,
and $R$ has \emph{minimal multiplicity} when $e(R)= \beta^R_0(\m)-\dim(R)+1$.
\end{defn}

\begin{ex} \label{ex11}
Let $k$ be a field, let $r$ be a positive integer, and consider the ring
$R=k[X_1,\ldots,X_r]/(X_1,\ldots,X_r)^2$. This is a local
artinian ring of minimal multiplicity, with multiplicity $e(R)=r+1$
and type $r$.
(In particular $R$ is Gorenstein if and only if $r=1$.)
Hence, the canonical module $\omega_R$ has
$\beta_0^R(\omega_R)=r$. Furthermore, the exact sequence
$$0\to k^{r^2-1}\to R^r\to\om\to 0$$
(obtained by truncating a minimal free resolution of $\om$)
can be used to show that
$\beta_i^R(\om)=(r^2-1)r^{i-1}$ for all $i\geq 1$.
\end{ex}

We will have several opportunities to use the following fact
from~\cite[0.(10.3.1)]{grothendieck:ega3-1}.

\begin{disc} \label{disc15}
Let $(R,\m,k)$ be a local ring and let $\vf_0\colon k\to l$ be a field extension.
Then there is a flat local ring homomorphism
$\vf\colon (R,\m,k)\to (S,\n,l)$ such that $S$ is complete, the 
extension $k\to l$ induced by $\vf$ is precisely $\vf_0$,
and $\n=\m S$.
\end{disc}

The next three
results explain the location of rings
homologically of minimal multiplicity in the heierarchy of rings.

\begin{prop} \label{disc11a}
If $R$ is a local Cohen-Macaulay ring with minimal multiplicity, then it is
strongly homologically of minimal multiplicity of type $(1,e(R)-1)$. 
\end{prop}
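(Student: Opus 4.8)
The plan is to produce the local homomorphism $\vf\colon R\to S$ and the $S$-module $M$ demanded by the ``strong'' version of Definition~\ref{defn01}, choosing $S$ to be a flat extension of $R$ with infinite residue field and $M$ to be the quotient of $S$ by a minimal reduction of its maximal ideal.

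First I would apply Remark~\ref{disc15} to a field extension $k\to l$ with $l$ infinite (for instance the algebraic closure of $k$, or $k$ itself when it is already infinite), obtaining a flat local homomorphism $\vf\colon(R,\m,k)\to(S,\n,l)$ with $S$ complete and $\n=\m S$. Since $S$ is flat over the Cohen-Macaulay ring $R$ with Cohen-Macaulay closed fibre $S/\m S=l$, the ring $S$ is Cohen-Macaulay; being also complete, it has a canonical module $\omega_S$, which is Definition~\ref{defn01}\eqref{defn01a}. Condition~\eqref{defn01b} holds because $\vf$ is flat and $S/\m S=l$ is a field, hence Gorenstein. Because $S$ is flat over $R$ with closed fibre a field, one has $\n^i/\n^{i+1}\cong(\m^i/\m^{i+1})\otimes_k l$ for all $i\geq 0$, so $S$ and $R$ have the same Hilbert-Samuel function; in particular $\dim S=\dim R$, $\beta_0^S(\n)=\beta_0^R(\m)$, and $e(S)=e(R)$, so $S$ again has minimal multiplicity.

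Next, using that $l$ is infinite, I would fix a minimal reduction $\fb=(x_1,\dots,x_d)$ of $\n$ with $d=\dim S$. Then $\fb$ is $\n$-primary, so $x_1,\dots,x_d$ is a system of parameters, hence an $S$-regular sequence since $S$ is Cohen-Macaulay. Set $M:=S/\fb$; this is nonzero, and the Koszul complex on $x_1,\dots,x_d$ is a finite free resolution of $M$, so $\pd_S M=d<\infty$ and therefore $M\in\cata(S)$ by Remark~\ref{disc11}; this settles condition~\eqref{defn01'c}. For condition~\eqref{defn01d} with $m=1$ and $n=e(R)-1$: by the classical characterization of rings of minimal multiplicity, the equality $e(S)=\beta_0^S(\n)-\dim S+1$ recalled in Definition~\ref{defn12} is equivalent to $\n^2=\fb\n$ for a minimal reduction $\fb$, and in particular gives $\n^2\subseteq\fb$, whence $\n^2M=0$; moreover $\beta_0^S(M)=1$, so $m=1$. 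Since $\n^2\subseteq\fb$, the artinian local ring $\overline S:=S/\fb$ has maximal ideal $\n\overline S=\n/\fb$ with $(\n\overline S)^2=0$, so this maximal ideal is an $l$-vector space and $\beta_0^S(\n M)=\dim_l(\n\overline S)=\len_S(\overline S)-1$. Finally $\fb$ is generated by a system of parameters for the Cohen-Macaulay ring $S$ and is a reduction of $\n$, so $\len_S(\overline S)=e(\fb;S)=e(\n;S)=e(S)=e(R)$; hence $\beta_0^S(\n M)=e(R)-1$. Thus $\vf$ and $M$ exhibit $R$ as strongly homologically of minimal multiplicity of type $(1,e(R)-1)$.

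I expect the only step requiring genuine care to be the verification of~\eqref{defn01d}, and specifically the identification of $\beta_0^S(\n M)$ with exactly $e(R)-1$: this is the single place where the hypothesis of minimal multiplicity on $R$ (rather than mere Cohen-Macaulayness) is used, entering through the identity $\n^2=\fb\n$ for a minimal reduction $\fb$ together with the length computation $\len_S(S/\fb)=e(R)$.
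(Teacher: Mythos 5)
Your proof is correct and follows essentially the same route as the paper: pass to a flat local extension $S$ with infinite residue field via Remark~\ref{disc15}, take $M=S/\fb$ for a minimal reduction $\fb$ of $\n$, and use minimal multiplicity to see $\n^2 M=0$, finite projective dimension to see $M\in\cata(S)$, and a length count to identify $\beta_0^S(\n M)=e(R)-1$. The only stylistic difference is that you invoke the classical characterization ``minimal multiplicity $\Leftrightarrow\n^2=\fb\n$'' as a black box, whereas the paper rederives $\n^2(S/(\x)S)=0$ by an explicit length computation along the filtration $\n^2(S/(\x)S)\subseteq\n(S/(\x)S)\subseteq S/(\x)S$; the substance is the same.
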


\begin{proof}
Remark~\ref{disc15} provides a flat local ring homomorphism 
$\psi\colon(R,\m,k) \to (S,\n,l)$
such that $S$ is complete,
$l$ is the algebraic closure of $k$
and $\n=\m S$.
It follows readily that $S$ is Cohen-Macaulay and 
has a canonical module $\omega_S$. Furthermore, we have
$e(S)=e(R)$ and $\beta^S_0(\n)= \beta^R_0(\m)$
and $\dim(S)=\dim(R)$. In particular, the ring $S$ has minimal multiplicity.

The fact that $S$ is Cohen-Macaulay and has infinite residue field
implies that 
there exists an $S$-regular sequence $\x\in\n\smallsetminus\n^2$ such that
$\len_S(S/(\x)S)=e(S)$.
(The sequence $\x$ generates a minimal reduction of $\n$.)
This explains the second equality in the next sequence:
\begin{align*}
\beta^S_0(\n)-\dim(S)+1
&=e(S) \\
&=\len_S(S/(\x)S)\\
&= 1+\beta^S_0(\fn/(\x)S)+\len_S(\n^2 (S/(\x)S))\\
&= 1+\beta^S_0(\n)-\dim(S)+\len_S(\n^2 (S/(\x)S)).
\end{align*}
The first equality is from the minimal multiplicity condition.
The third equality is explained by the filtration
$\n^2 (S/(\x)S)\subseteq\n (S/(\x)S) \subseteq S/(\x)S$.
The fourth equality is from the fact that
$\x$ is a maximal $S$-regular sequence in $\n\smallsetminus\n^2$.
From this sequence, it follows 
that $\n^2(S/(\x)S)=0$. (See also the proof of~\cite[(1)]{abhyankar:lrhed}.)

Since the sequence $\x$ is $S$-regular, 
the $S$-module $M=S/(\x)S$ has finite projective dimension.
Remark~\ref{disc11} then implies that $M\in\cata(S)$.
It follows that $R$ is strongly homologically of minimal multiplicity of type
$(m,n)$ where
$m=\beta^S_0(M)=1$ and $n=\beta^S_0(\n M)=e(R)-1$.
\end{proof}

\begin{prop} \label{disc11z}
If $R$ is a local Gorenstein ring, then it is
strongly homologically of minimal multiplicity of type $(m,n)$
for all integers $m\geq 1$ and $n\geq 0$.
\end{prop}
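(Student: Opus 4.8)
The plan is to exhibit, for an arbitrary Gorenstein local ring $R$ and arbitrary integers $m \geq 1$, $n \geq 0$, a flat local homomorphism $\vf \colon R \to S$ with Gorenstein closed fibre together with an $S$-module $M$ witnessing the definition. Since the definition requires $n \geq 0$ (and the case distinctions in Theorem~\ref{thm01} only need $n \geq 1$ to force $\n M \neq 0$), I will allow $n = 0$ here, with the understanding that in that degenerate case $\n M = 0$. First I would reduce to a convenient $S$: apply Remark~\ref{disc15} to a field extension $k \to l$ to get a flat local map $\vf_0 \colon R \to S_0$ with $S_0$ complete, $\n_0 = \m S_0$, and residue field $l$; because $R$ is Gorenstein and $\vf_0$ is flat with Gorenstein (indeed regular, since $\n_0 = \m S_0$ makes the closed fibre $S_0/\m S_0 = l$ a field) closed fibre, $S_0$ is again Gorenstein, hence Cohen-Macaulay with canonical module $\omega_{S_0} \cong S_0$. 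The point of passing to $S_0$ is to arrange an infinite (or large enough) residue field so that there is room to build the module $M$ with prescribed Betti data.

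Next I would construct $M$ over a further quotient-or-modification of $S_0$. The cleanest route: inside $S_0$, since $\dim S_0 = \dim R$, we need a module $M$ with $\n^2 M = 0$, $\beta_0^{S_0}(M) = m$, $\beta_0^{S_0}(\n M) = n$, and $M \in \cata(S_0)$. Because $S_0$ is Gorenstein, every finitely generated module has the property that membership in $\cata(S_0)$ is governed by finite Gorenstein dimension, and over a Gorenstein ring G-dimension is always finite; so in fact \emph{every} finitely generated $S_0$-module lies in $\cata(S_0)$ (this uses Remark~\ref{disc11} together with the standard fact $\catg(S_0) = $ all modules when $S_0$ is Gorenstein). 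Therefore condition~\eqref{defn01'c} is automatic, and I only need to produce a module with the right $\m$-adic data. Take $T = S_0/(\x)S_0$ for a maximal $S_0$-regular sequence $\x$ (so $T$ is Artinian Gorenstein, and finite free over nothing but still: $\beta_0^{S_0}$ of a $T$-module equals its minimal number of generators over $T$ since $\x \subseteq \n_0^{\text{sufficiently deep}}$ — more precisely choose $\x$ in $\n_0 \smallsetminus \n_0^2$ when possible, or just note $\beta_0^{S_0}(-) = \beta_0^{T}(-)$ for $T$-modules because $S_0 \to T$ is surjective with kernel in $\n_0$). Then pick $M$ to be the $T$-module $T/\n_0^2 T \oplus$ (copies of $T/\n_0 T = l$) assembled so that $\beta_0 = m$ and $\beta_0(\n M) = n$: concretely $M = (l)^{m-1} \oplus N$ where $N$ is a cyclic module with $\n^2 N = 0$ and $\beta_0(\n N) = n$, e.g. $N = T/\fa$ for an ideal $\fa$ with $\n_0^2 \subseteq \fa$ and $\dim_l(\n_0/\fa) = n$; such $\fa$ exists once $\dim_l \n_0/\n_0^2 \geq n$, which I can guarantee by enlarging $S_0$ further (adjoin power-series variables, staying flat with regular fibre and Gorenstein).

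The remaining verification is routine: $\n^2 M = 0$ by construction; $\beta_0^S(M) = m$ and $\beta_0^S(\n M) = n$ by the $l$-dimension count; $M \in \cata(S)$ by the Gorenstein-implies-all-modules-have-finite-Gdim observation plus Remark~\ref{disc11}; and $\vf \colon R \to S$ is flat with regular (hence Gorenstein) closed fibre as a composite of such. The main obstacle is purely bookkeeping: making sure the chosen $S$ simultaneously has enough embedding dimension in its maximal ideal to realize both $m$ and $n$ as the prescribed Betti numbers, while keeping the closed fibre Gorenstein and the map flat — this is handled by first base-changing along $R \to R[[Y_1,\dots,Y_N]]$-type maps (or the completion trick of Remark~\ref{disc15} to get an infinite residue field and then a regular fibre extension) to create as many ``degrees of freedom'' in $\n/\n^2$ as needed before cutting down to the Artinian Gorenstein quotient where $M$ lives.
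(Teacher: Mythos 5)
Your proposal lands on the same core construction as the paper: adjoin power-series variables to create enough room in $\n/\n^2$, take $M$ to be $l^{m-1}$ plus a cyclic finite-length module with $\n^2 M = 0$ and $\dim_l(\n M) = n$, and use that every module over a Gorenstein ring lies in the Auslander class because $\omega_S \cong S$ makes the defining conditions trivially satisfied. The paper does this directly and concretely with $S = R[\![X_1,\dots,X_n]\!]$ and $M = k^{m-1} \oplus S/(\m S + ((X_1,\dots,X_n)S)^2)$, whereas you take two detours that add nothing: the initial passage to $S_0$ via Remark~\ref{disc15} is unnecessary (no field extension or completion is needed, and in any case it fails to enlarge $\n/\n^2$ since $\n_0 = \m S_0$), and the intermediate Artinian quotient $T = S_0/(\x)S_0$ introduces spurious bookkeeping about where $\x$ sits relative to $\n_0^2$ without buying anything---you only ever need a finite-length $S$-module, not a new base ring. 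You do correctly diagnose the real obstacle (insufficient embedding dimension when $R$ is regular, e.g.\ a field) and correctly identify the fix (adjoin power-series variables), but you defer it to a parenthetical rather than making it the construction. Had you simply started with $S = R[\![X_1,\dots,X_n]\!]$, every other step you outline goes through cleanly and you would have recovered the paper's proof.
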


\begin{proof}
Fix integers $m\geq 1$ and $n\geq 0$.
The ring $S=R[\![X_1,\ldots,X_n]\!]$
is local with maximal ideal $\n=(\m,X_1,\ldots,X_n)S$
and residue field $k$. The natural inclusion
$\vf\colon R\to S$ is flat with Gorenstein closed fibre
$S/\m S\cong k[\![X_1,\ldots,X_n]\!]$.
Since $R$ is Gorenstein, the same is true of $S$.
Thus $S$ has canonical module $\omega_S=S$.
It follows readily from the definition that every $S$-module is in
$\cata(S)$. 
In particular, the $S$-module
$$M=k^{m-1}\oplus S/(\m S+((X_1,\ldots,X_n)S)^2)$$ 
is in $\cata(S)$. 
It is straightforward to show that 
$\n^2 M=0$ and
$\beta^S_0(M)=m$.
To complete the proof, use  the following isomorphisms
\begin{align*}
\n M
&\cong \n S/(\m S+((X_1,\ldots,X_n)S)^2) \\
&\cong (X_1,\ldots,X_n)k[\![X_1,\ldots,X_n]\!]/((X_1,\ldots,X_n)k[\![X_1,\ldots,X_n]\!])^2\\
&\cong k^n
\end{align*}
to see that $\beta^S_0(\n M)=n$.
\end{proof}

\begin{prop}\label{disc11b}
If $R$ is homologically of minimal multiplicity, then it is Cohen-Macaulay.
\end{prop}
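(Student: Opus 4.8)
The plan is to extract from the hypothesis the flat local homomorphism $\vf\colon(R,\fm,k)\to(S,\fn,l)$ furnished by Definition~\ref{defn01}, and then to descend the Cohen-Macaulay property from $S$ to $R$ along $\vf$; the auxiliary module $M$ of Definition~\ref{defn01} plays no role here.

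First I would note that condition~\eqref{defn01a} says that $S$ admits a canonical module $\omega_S$, so by the theorem of Foxby, Reiten and Sharp recalled in the introduction, $S$ is Cohen-Macaulay. Next, condition~\eqref{defn01b} says that $\vf$ is flat with Gorenstein closed fibre $S/\fm S$; since Gorenstein local rings are Cohen-Macaulay, the closed fibre $S/\fm S$ is Cohen-Macaulay as well.

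Finally I would invoke the standard behaviour of depth and Krull dimension under a flat local homomorphism: for $\vf\colon(R,\fm)\to(S,\fn)$ flat and local one has $\depth(S)=\depth(R)+\depth(S/\fm S)$ and $\dim(S)=\dim(R)+\dim(S/\fm S)$, whence $S$ is Cohen-Macaulay if and only if both $R$ and $S/\fm S$ are. Since we have just seen that $S$ and $S/\fm S$ are Cohen-Macaulay, it follows that $R$ is Cohen-Macaulay. I do not anticipate any real obstacle in this argument: it is simply the combination of two well-known facts (a local ring with a canonical module is Cohen-Macaulay, and Cohen-Macaulayness descends along a flat local map with Cohen-Macaulay closed fibre), and the only point worth flagging is that the data $M$, $t$, the condition $\fn^2 M=0$, and the Tor-vanishing are irrelevant for this particular statement.
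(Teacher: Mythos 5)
Your proof is correct and follows essentially the same route as the paper: conclude that $S$ is Cohen-Macaulay from the existence of a canonical module, then descend along the flat local map $\vf$. The only superfluous step is the observation that the closed fibre is Gorenstein, hence Cohen-Macaulay: for a flat local homomorphism the equalities $\depth S=\depth R+\depth(S/\m S)$ and $\dim S=\dim R+\dim(S/\m S)$ already force both $R$ and $S/\m S$ to be Cohen-Macaulay once $S$ is, so the Gorenstein hypothesis on the fibre is not needed here.
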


\begin{proof}
Definition~\ref{defn01}, 
the ring $S$ has a canonical module, so it is Cohen-Macaulay.
The homomorphism $\vf$ is flat and local, and it follows that $R$ is Cohen-Macaulay.
\end{proof}

\begin{disc} \label{d67zzz}
If $R$ is strongly homologically of minimal multiplicity,
then $R$ need not have minimal multiplicity. To see this, 
let $R$ be a local Gorenstein ring that is not of minimal multiplicity.
(For example, it is straightforward to show that the ring $R=k[X]/(X^3)$ satisfies
these conditions.) Proposition~\ref{disc11z} shows that
$R$ is strongly homologically of minimal multiplicity.
\end{disc}

\begin{disc} \label{d67}
The following diagram summarizes the relations between the classes of rings
under consideration:
$$\xymatrix{
\txt{Cohen-Macaulay and  minimal multiplicity} \ar@<1ex>@{=>}[d]^-{\eqref{disc11a}} \\
\txt{strongly homologically  of minimal multiplicity}
\ar@<1ex>@{=>}[d]^-{\eqref{disc11}} 
\ar@<1ex>@{=>}[u] |-{-} ^-{\eqref{d67zzz}} 
\ar@{=>}@<1ex>[r]|-{|}^-{\eqref{ex11}+\eqref{disc11a}}
& \text{Gorenstein} \ar@<1ex>@{=>}[l]^-{\eqref{disc11z}} \\
\txt{homologically of  minimal multiplicity}\ar@<1ex>@{=>}[r]^-{\eqref{disc11b}} 
\ar@<1ex>@{==>}[u]^{\eqref{qblah}} 
&
\txt{Cohen-Macaulay.}\ar@<1ex>@{=>}[l] |-{|}^-{\eqref{ex9454}}
}$$
At this time we do not know 
whether the vertical implication marked~\eqref{qblah} holds.
We pose this explicitly as a question next.
\end{disc}

\begin{question} \label{qblah}
If $R$ is homologically  of minimal multiplicity, must
it be strongly homologically  of minimal multiplicity?
\end{question}

We end this section with a natural result to be used later.

\begin{lem}\label{lem41}
Let $\fa$ be an ideal of $R$ with
$\fa$-adic completion $\comp{R}^{\fa}$.
\begin{enumerate}[\rm(a)]
\item\label{lem41a}
Then $R$ is homologically of minimal multiplicity of type $(m,n,t)$
if and only if  $\comp R^{\fa}$
is  homologically of minimal multiplicity of type $(m,n,t)$.
\item\label{lem41b}
Then $R$ is strongly homologically of minimal multiplicity of type $(m,n)$
if and only if $\comp R^{\fa}$
is strongly homologically of minimal multiplicity of type $(m,n)$.
\end{enumerate}
\end{lem}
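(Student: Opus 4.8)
The plan is to reduce both statements to a single observation: the data witnessing the ``homologically of minimal multiplicity'' condition for $R$ is essentially insensitive to replacing $R$ by $\comp{R}^{\fa}$, because the witnessing homomorphism $\vf\colon R\to S$ factors through $\comp{R}^{\fa}$ in a way that preserves all the relevant hypotheses. First I would treat the forward direction of~\eqref{lem41a}. Suppose $R$ is homologically of minimal multiplicity of type $(m,n,t)$, witnessed by a flat local homomorphism $\vf\colon(R,\m,k)\to(S,\n,l)$ with Gorenstein closed fibre $S/\m S$, a canonical module $\omega_S$, and a finitely generated $S$-module $M\neq 0$ satisfying Definition~\ref{defn01}\eqref{defn01c}--\eqref{defn01d}. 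The point is that, since $\vf$ is flat and local with $\n=\m S$ contained in the maximal ideal (more precisely $\n\supseteq\m S$ and $S/\m S$ is local), the $\fa S$-adic topology on $S$ refines the topology needed, and the composite $R\to\comp{R}^{\fa}$ followed by the natural base change gives a flat local homomorphism $\comp{\vf}\colon\comp{R}^{\fa}\to\comp{S}^{\fa S}$. Here $\comp{S}^{\fa S}\cong S\otimes_R\comp{R}^{\fa}$ is flat over $S$, so it has canonical module $\omega_S\otimes_S\comp{S}^{\fa S}$, its closed fibre over $\comp{R}^{\fa}$ is $\comp{S}^{\fa S}/\fa\comp{R}^{\fa}\cdot\comp{S}^{\fa S}\cong S/\fa S$, which still has the Gorenstein closed-fibre property one needs (one must check that the closed fibre of $\comp{\vf}$ coincides with $S/\m S$, using $\m\comp{R}^{\fa}=\comp{\m}{}$ and faithful flatness). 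Then one takes $\comp{M}=M\otimes_S\comp{S}^{\fa S}$; flat base change preserves $\tor^S_i(\omega_S,M)=0$ for $i\geq t$, preserves $\n^2 M=0$, and preserves the minimal numbers of generators $\beta_0(M)=m$ and $\beta_0(\n M)=n$ since $\comp{S}^{\fa S}$ is flat over $S$ with the same residue field $l$.

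Next I would handle the reverse direction of~\eqref{lem41a}. This is slightly more delicate because a priori the witnessing homomorphism $\vf'\colon\comp{R}^{\fa}\to S'$ need not descend to $R$. However, one can compose with the natural map $R\to\comp{R}^{\fa}$: the composite $R\to\comp{R}^{\fa}\to S'$ is again flat and local (a composite of flat local maps), and its closed fibre is $S'/\m S'$. One must verify that this closed fibre agrees with the closed fibre $S'/\comp{\m}{}S'$ of $\vf'$, which holds because $\m\comp{R}^{\fa}$ generates $\comp{\m}{}$ up to completion and $S'$ is complete/noetherian so $\m S'=\comp{\m}{}S'$; hence the Gorenstein closed-fibre hypothesis transfers verbatim. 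The canonical module $\omega_{S'}$ and the module $M'$ are unchanged, so conditions~\eqref{defn01a}, \eqref{defn01c}, \eqref{defn01d} are literally the same statement. This gives that $R$ is homologically of minimal multiplicity of type $(m,n,t)$.

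For part~\eqref{lem41b} the argument is identical, except that in place of the Tor-vanishing condition~\eqref{defn01c} one must check that membership in the Auslander class $\cata(S)$ is preserved under the flat base change $S\to\comp{S}^{\fa S}$ (forward direction) and is unchanged when the witnessing ring $S'$ is simply reused (reverse direction). For the forward direction, this is a standard flat-base-change property of the Auslander class: if $M\in\cata(S)$ and $S\to T$ is flat, then $M\otimes_S T\in\cata(T)$, which one sees by tensoring the defining isomorphism $\xi_M$ and the vanishing conditions with $T$, using $\omega_S\otimes_S T\cong\omega_T$ and the commutation of $\Hom_S(\omega_S,-)$ with flat base change on finitely generated modules; the reverse direction requires nothing new.

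The main obstacle I expect is purely bookkeeping rather than conceptual: one must be careful that ``$\fa$-adic completion'' of $R$ interacts correctly with the maximal ideal and residue field (so that $\comp{R}^{\fa}$ is again local with residue field $k$, which requires $\fa\subseteq\m$; if $\fa\not\subseteq\m$ then $\comp{R}^{\fa}=R$ or splits, and the statement is trivial or must be interpreted componentwise), and that the base-changed ring $\comp{S}^{\fa S}$ really is the completion one wants and really does carry a canonical module with the right identification $\omega_S\otimes_S\comp{S}^{\fa S}$. Once these identifications are pinned down, every condition in Definition~\ref{defn01} transfers along a flat local base change with the same residue field, and the lemma follows. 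I would organize the write-up as: (i) set up $\comp{\vf}$ and $\comp{S}^{\fa S}$; (ii) check the five bullet conditions transfer forward; (iii) observe the reverse direction is the composite with $R\to\comp{R}^{\fa}$; (iv) remark that the Auslander-class variant needs only the flat-base-change stability of $\cata(-)$.
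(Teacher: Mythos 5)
Your proposal is correct and follows essentially the same route as the paper: one direction composes the witnessing homomorphism with the canonical map $R\to\comp{R}^{\fa}$, and the other passes to the $\fa$-adic completion of $S$ (noting that $M$, being killed by $\n^2$, is unchanged by completion) to obtain the witnessing data for $\comp{R}^{\fa}$. The only cosmetic difference is that the paper tacitly uses $M\cong\comp{M}^{\fa}$ rather than writing it as a tensor product $M\otimes_S\comp{S}^{\fa S}$.
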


\begin{proof}
We prove part~\eqref{lem41a}; the proof of part~\eqref{lem41b} is similar.

Assume that $\comp {R}^{\fa}$ is homologically of minimal multiplicity of type $(m,n,t)$.
Let $\vf_1\colon \comp R^{\fa}\to S_1$ be a ring homomorphism, and let $M_1$ be an $S_1$-module
as in Definition~\ref{defn01}. It is straightforward to verify that the composition
$\vf_1\psi\colon R\to S_1$ and the $S_1$-module $M_1$ satisfy the axioms to show that 
$R$ is homologically of minimal multiplicity of type $(m,n,t)$.

Assume next that $R$ is homologically of minimal multiplicity of type $(m,n,t)$.
Let $\vf_2\colon R\to S_2$ be a ring homomorphism, and let $M_2$ be an $S_2$-module
as in Definition~\ref{defn01}. Then the induced map
$\comp {\vf_2}^{\fa}\colon \comp R^{\fa}\to \comp {S_2}^{\fa}$ and the 
$\comp{S_2}^{\fa}$ module $M_2\cong\comp{M_2}^{\fa}$ 
show that 
$\comp R^{\fa}$ is homologically of minimal multiplicity of type $(m,n,t)$.
\end{proof}



\section{Patterns in Betti Numbers} \label{sec1}

This section contains the proof of Theorem~\ref{thm01} from the introduction.

\begin{thm}\label{thm11} 
Let $(S,\n,l)$ be a local  ring, and let $M$ and $N$ be 
finitely generated $S$-modules.
Let $m$, $n$ and $t$ be integers, and
assume that there is an exact sequence of $S$-module homomorphisms
$0\to l^{n}\to M\to l^{m}\to 0$.
\begin{enumerate}[\quad\rm(a)]
\item\label{thm11a} 
If $\Tor_{i}^{S}(N,M)=0$
for $i=t,t+1$, then 
$m\beta_{t+1}^{S}(N)=n\beta_{t}^{S}(N)$. 
\item\label{thm11b} 
If $m\geq 1$ and
$\Tor_{i}^{S}(N,M)=0$ for $i=t,\ldots,t+s$ for some positive integer $s$, then
$\beta_{t+s}^{S}(N)=(n/m)^{s}\cdot \beta_{t}^{S}(N)$.
\end{enumerate}
\end{thm}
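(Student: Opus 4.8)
The plan is to feed the given short exact sequence $0\to l^{n}\to M\to l^{m}\to 0$ into the long exact sequence of $\Tor^{S}(N,-)$ and read off the consequences of the vanishing of $\Tor_{i}^{S}(N,M)$. Two elementary facts are used throughout: first, $\Tor$ commutes with finite direct sums in each variable, so $\Tor_{i}^{S}(N,l^{j})\cong\Tor_{i}^{S}(N,l)^{j}$ for every $j\geq 0$; second, if $F_{\bullet}$ is a minimal free resolution of $N$, then the complex $F_{\bullet}\otimes_{S}l$ has zero differentials, so $\dim_{l}\Tor_{i}^{S}(N,l)=\beta_{i}^{S}(N)$.

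For part~\eqref{thm11a}, the long exact sequence associated to $0\to l^{n}\to M\to l^{m}\to 0$ contains the segment
\[
\Tor_{t+1}^{S}(N,M)\to\Tor_{t+1}^{S}(N,l^{m})\to\Tor_{t}^{S}(N,l^{n})\to\Tor_{t}^{S}(N,M).
\]
The hypothesis $\Tor_{t}^{S}(N,M)=0=\Tor_{t+1}^{S}(N,M)$ forces the middle arrow to be an isomorphism $\Tor_{t+1}^{S}(N,l)^{m}\cong\Tor_{t}^{S}(N,l)^{n}$ of $l$-vector spaces. Comparing $l$-dimensions yields $m\,\beta_{t+1}^{S}(N)=n\,\beta_{t}^{S}(N)$.

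For part~\eqref{thm11b}, I would induct on $s$. When $s=1$ the assertion is exactly part~\eqref{thm11a} divided by $m$, which is legitimate since $m\geq 1$. For the inductive step, observe that for each $j$ with $0\leq j\leq s-1$ the indices $t+j$ and $t+j+1$ both lie in $\{t,\dots,t+s\}$, so part~\eqref{thm11a} applies with $t$ replaced by $t+j$ and gives $\beta_{t+j+1}^{S}(N)=(n/m)\,\beta_{t+j}^{S}(N)$. Multiplying these $s$ relations together (a telescoping product) gives $\beta_{t+s}^{S}(N)=(n/m)^{s}\,\beta_{t}^{S}(N)$.

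There is essentially no serious obstacle here; the only points requiring care are bookkeeping ones — ensuring the $\Tor$ groups are taken against the residue field $l$ so that the objects in play are genuine $l$-vector spaces whose dimensions are the Betti numbers, and checking that $t,\dots,t+s$ all lie in the range where $\Tor_{i}^{S}(N,M)$ is assumed to vanish so that part~\eqref{thm11a} may be invoked at each stage of the induction.
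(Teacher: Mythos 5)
Your proof is correct and is essentially the same as the paper's: both extract the same segment of the long exact sequence in $\Tor^S(N,-)$, identify the two outer terms as $l$-vector spaces of dimensions $m\beta^S_{t+1}(N)$ and $n\beta^S_t(N)$, and then iterate (the paper says ``recursively,'' you phrase it as an induction with a telescoping product) to get part~(b).
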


\begin{proof}
For each integer $i$, we have
$\Tor_i^S(N,l^m)\cong l^{m\beta^S_i(N)}$
and
$\Tor_i^S(N,l^n)\cong l^{n\beta^S_i(N)}$.
Thus, a piece of the long exact sequence in $\Tor^S(N,-)$ associated to the given sequence
has the form
\[\Tor_{i+1}^{S}(N,M)\to
l^{m\beta^S_{i+1}(N)}\to l^{n\beta^S_i(N)}\to \Tor_{i}^{S}(N,M).
\]
If  $\Tor_{t}^{S}(N,M)=0=\Tor_{t+1}^{S}(N,M)$, then
the sequence yields an isomorphism
$l^{m\beta^S_{t+1}(N)}\cong l^{n\beta^S_t(N)}$
and hence the equality 
$m\beta_{t+1}^{S}(N)=n\beta_{t}^{S}(N)$.
Recursively, if $\Tor_{i}^{S}(N,M)=0$ for $i=t,\ldots,t+s$, then
$\beta_{t+s}^{S}(N)=(n/m)^{s}\cdot \beta_{t}^{S}(N)$.
\end{proof}

The next result is  dual to the previous one.
It can be proved using the ideas from Theorem~\ref{thm11}
with $\ext^i_S(N,-)$ in place of $\tor^S_i(N,-)$. 
See also Remark~\ref{disc13}.

\begin{thm}\label{thm11'} 
Let $(S,\n,l)$ be a local  ring, and let $M$ and $N$ be 
finitely generated $S$-modules.
Let $m$, $n$ and $t$ be integers, and
assume that there is an exact sequence of $S$-module homomorphisms
$0\to l^{m}\to M\to l^{n}\to 0$.
\begin{enumerate}[\quad\rm(a)]
\item\label{thm11'a} 
If $\Ext^{i}_{S}(N,M)=0$
for $i=t,t+1$, then 
$n\beta_{t}^{S}(N)=m\beta_{t+1}^{S}(N)$. 
\item\label{thm11'b} 
If $m\geq 1$ and
$\Ext^{i}_{S}(N,M)=0$ for $i=t,\ldots,t+s$ for some positive integer $s$, then
$\beta_{t+s}^{S}(N)=(n/m)^{s}\cdot \beta_{t}^{S}(N)$.
\qed
\end{enumerate}
\end{thm}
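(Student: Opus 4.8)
The plan is to transcribe the proof of Theorem~\ref{thm11} with $\Ext^\bullet_S(N,-)$ in place of $\Tor_\bullet^S(N,-)$, being mindful that $\Ext_S(N,-)$ is covariant in the module sitting in the second argument. First I would record the basic computation: if $F_\bullet\xrightarrow{\simeq} N$ is a minimal free resolution, then all differentials of the complex $\Hom_S(F_\bullet,l)$ vanish, so $\Ext^i_S(N,l)\cong l^{\beta^S_i(N)}$ for every $i$; passing to direct sums gives $\Ext^i_S(N,l^m)\cong l^{m\beta^S_i(N)}$ and $\Ext^i_S(N,l^n)\cong l^{n\beta^S_i(N)}$.

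Next I would apply $\Ext_S(N,-)$ to the given exact sequence $0\to l^m\to M\to l^n\to 0$ and extract from the resulting long exact sequence the four-term piece
\[
\Ext^i_S(N,M)\longrightarrow l^{n\beta^S_i(N)}\longrightarrow l^{m\beta^S_{i+1}(N)}\longrightarrow \Ext^{i+1}_S(N,M),
\]
where the middle arrow is the connecting homomorphism $\Ext^i_S(N,l^n)\to\Ext^{i+1}_S(N,l^m)$. For part~\eqref{thm11'a}, set $i=t$: the vanishing $\Ext^t_S(N,M)=0=\Ext^{t+1}_S(N,M)$ makes this connecting map an isomorphism $l^{n\beta^S_t(N)}\cong l^{m\beta^S_{t+1}(N)}$, and comparing $l$-vector-space dimensions gives $n\beta^S_t(N)=m\beta^S_{t+1}(N)$. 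For part~\eqref{thm11'b}, running the same argument for $i=t,t+1,\dots,t+s-1$ (for each such $i$ both $\Ext^i_S(N,M)$ and $\Ext^{i+1}_S(N,M)$ vanish by hypothesis) yields $n\beta^S_i(N)=m\beta^S_{i+1}(N)$, hence, since $m\geq 1$, $\beta^S_{i+1}(N)=(n/m)\beta^S_i(N)$; composing these $s$ equalities gives $\beta^S_{t+s}(N)=(n/m)^s\beta^S_t(N)$.

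I do not expect a real obstacle here---the argument is a formal dualization of Theorem~\ref{thm11}---so the only thing to watch is the bookkeeping. In this statement the short exact sequence is $0\to l^m\to M\to l^n\to 0$, with $l^m$ the submodule rather than the quotient, which is the reverse of the arrangement in Theorem~\ref{thm11}. Because $\Ext_S(N,-)$ is covariant it preserves this order, so the connecting map runs $l^{n\beta^S_i(N)}\to l^{m\beta^S_{i+1}(N)}$ and the ratio produced is $n/m$, consistent with the stated formula; one must simply take care not to invert it.
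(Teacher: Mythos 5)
Your proof is correct and is exactly the direct argument the paper indicates: it says Theorem~\ref{thm11'} ``can be proved using the ideas from Theorem~\ref{thm11} with $\Ext^i_S(N,-)$ in place of $\Tor^S_i(N,-)$,'' which is precisely your transcription, with the covariance bookkeeping handled correctly. (The paper also notes in Remark~\ref{disc13} that one can instead deduce Theorem~\ref{thm11'} from Theorem~\ref{thm11} by Matlis duality, but the route you chose is the one the authors primarily have in mind.)
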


\begin{disc}\label{disc13}
Theorem~\ref{thm11'} is more than just dual to Theorem~\ref{thm11}; it  is equivalent to
Theorem~\ref{thm11}. To show this, we require a few facts from Matlis duality. 
Let $(S,\n,l)$ be a local ring.
Let $E_S(l)$ denote the injective hull of the residue field $l$,
and consider the Matlis duality functor $(-)^\vee=\Hom_S(-,E_S(l))$. 

First, recall that a module $M$ has finite length if and only if its Matlis dual $M^\vee$
has finite length. 
When $M$ has finite length, the natural biduality map $M\to M^{\vee\vee}$ is an
isomorphism, and $\n^2M=0$ if and only if $\n^2M^\vee=0$.
Using this, it is straightforward to show that there is
an exact sequence $0\to l^{m}\to M\to l^{n}\to 0$
if and only if there is an exact sequence
$0\to l^{n}\to M^\vee\to l^{m}\to 0$.

Second, if $M$ has finite length, then there  are isomorphisms
$$\Tor^S_i(N,M^\vee)^\vee\cong \ext^i_S(N,M^{\vee\vee})\cong\ext^i_S(N,M).$$
for each integer $i$ and each $S$-module $N$.
The first is a version of Hom-tensor adjointness, and the second comes from the
biduality isomorphism $M\to M^{\vee\vee}$.
Thus, we have $\Tor^S_i(N,M^\vee)=0$ if and only if $\ext^i_S(N,M)=0$.
(Furthermore, it is readily shown that
$M\in\cata(S)$ if and only if $M^{\vee}\in\catb(S)$.)
The equivalence of Theorems~\ref{thm11} and~\ref{thm11'}  now follows readily.

Each of the remaining results of this paper has a dual version that is equivalent
via a similar argument.
Because the results are equivalent, and not just similar, 
we only state the ``Tor-version'', and leave the 
``Ext-version'' for the reader.
\end{disc}

\begin{cor}\label{cor11} 
Let $(S,\n,l)$ be a local  ring, and let $M$ and $N$ be 
finitely generated $S$-modules.
Let $m$, $n$ and $t$ be integers with $m\geq 1$, and
assume that there is an exact sequence of $S$-module homomorphisms
$0\to l^{n}\to M\to l^{m}\to 0$.
Assume that $\Tor_{i}^{S}(N,M)=0$ for all $i\geq t$.
\begin{enumerate}[\quad\rm(a)]
\item \label{cor11a} 
If $n > m$ and $\beta_{t}^{S}(N)\neq 0$, then the sequence $\{\beta_{i}^{S}(N)\}$ grows
exponentially.
\item \label{cor11b} 
If $n = m$, then the sequence $\{\beta_{i}^{S}(N)\}$ is eventually
constant.
\item \label{cor11c} 
If $n < m$, then the sequence $\{\beta_{i}^{S}(N)\}$
is eventually zero, that is, the module $N$ has finite projective dimension.
\item \label{cor11d} 
If $N$ has infinite projective dimension, then $\n M\neq 0$ and
the number $n/m$ is a positive integer.
\end{enumerate}
\end{cor}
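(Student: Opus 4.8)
The plan is to read everything off the exact formula in Theorem~\ref{thm11}\eqref{thm11b}. Since $m\geq 1$ and $\Tor_i^S(N,M)=0$ for \emph{all} $i\geq t$, that result applies with an arbitrary positive integer $s$, so
\[
\beta_{t+s}^S(N)=(n/m)^s\cdot\beta_t^S(N)\qquad\text{for all }s\geq 0.
\]
Parts~\eqref{cor11a}, \eqref{cor11b} and~\eqref{cor11c} are then immediate consequences of this one identity. If $n>m$, then $n/m>1$, so when $\beta_t^S(N)\neq 0$ the quantity $\beta_{t+s}^S(N)=(n/m)^s\beta_t^S(N)$ grows exponentially in $s$, hence in $i=t+s$. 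If $n=m$, then $n/m=1$ and $\beta_{t+s}^S(N)=\beta_t^S(N)$ for every $s\geq 0$, so the sequence $\{\beta_i^S(N)\}$ is eventually constant. If $n<m$, then $0\leq n/m<1$, so $(n/m)^s\beta_t^S(N)$ is a nonnegative integer tending to $0$ as $s\to\infty$; it must therefore equal $0$ once $s$ is large, which says exactly that $N$ has finite projective dimension.

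For part~\eqref{cor11d} I would argue by contraposition. Assume $\pd_S(N)=\infty$. Then $\beta_i^S(N)\neq 0$ for every $i$, since a single vanishing Betti number truncates the minimal free resolution; in particular $b:=\beta_t^S(N)$ is a positive integer. To see $\n M\neq 0$: if $\n M=0$, then $M$ is a finitely generated $S/\n$-module, i.e. an $l$-vector space, and the given exact sequence of $l$-vector spaces forces $M\cong l^{m+n}$; then $\Tor_i^S(N,M)\cong l^{(m+n)\beta_i^S(N)}\neq 0$ for all $i$ because $m+n\geq m\geq 1$, contradicting the hypothesis that $\Tor_i^S(N,M)=0$ for $i\geq t$. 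Hence $\n M\neq 0$. Next, part~\eqref{cor11c} shows that $n<m$ would force $\pd_S(N)<\infty$, so $n\geq m$ and therefore $n/m\geq 1$. Finally, to upgrade $n/m$ from a rational number to an integer, write $n/m=p/q$ in lowest terms; the displayed identity gives $q^s\mid p^s b$ for all $s\geq 0$, and since $\gcd(p^s,q^s)=1$ this yields $q^s\mid b$ for all $s$. As $b$ is a fixed positive integer, $q=1$, so $n/m=p$ is an integer, and it is positive because $n/m\geq 1$.

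The computations are all short, and the genuine content of the Corollary is entirely carried by Theorem~\ref{thm11}. The only place that takes a moment of care is part~\eqref{cor11d}, where one must separately dispose of the degenerate case $\n M=0$ and then run the elementary divisibility argument that promotes $n/m$ to a positive integer; I do not anticipate any obstacle beyond this bookkeeping.
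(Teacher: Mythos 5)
Your proof is correct and follows the same route as the paper: everything is read off from the identity $\beta_{t+s}^S(N)=(n/m)^s\beta_t^S(N)$ furnished by Theorem~\ref{thm11}\eqref{thm11b}, with parts~\eqref{cor11a}--\eqref{cor11c} immediate and part~\eqref{cor11d} handled by ruling out $\n M=0$ via the forced isomorphism $M\cong l^{m+n}$ and the resulting nonvanishing of $\Tor_t^S(N,l)$. The only difference is cosmetic: the paper simply asserts that the identity ``$\beta_i^S(N)=(n/m)^{i-t}\beta_t^S(N)$ is a positive integer for all $i\geq t$'' entails $n/m\in\mathbb{Z}_{>0}$, while you supply the short divisibility argument (writing $n/m=p/q$ in lowest terms and deducing $q^s\mid\beta_t^S(N)$ for all $s$); this fills in a step the paper leaves implicit.
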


\begin{proof}
Theorem~\ref{thm11}\eqref{thm11b} 
implies that $\beta_{i}^{S}(N)=(n/m)^{i-t}\cdot
\beta_{t}^{S}(N)$ for all $i\geq t$. The conclusions~\eqref{cor11a}--\eqref{cor11c} now follow
immediately in this case, recalling that $N$ has finite projective dimension
if and only if $\beta_i^S(N)=0$ for $i\gg 0$. 

For part~\eqref{cor11d}, assume that $N$ has infinite projective dimension. 
It follows that 
$\beta_{i}^{S}(N)=(n/m)^{i-t}\cdot
\beta_{t}^{S}(N)$ is a positive integer for all $i\geq t$.
We conclude that $n/m$ is a positive integer.
If $\n M=0$, then $M\cong k^{m+n}$. Since $m+n\geq 1$, our Tor-vanishing
assumption implies that $\Tor_{t}^{S}(N,k)=0$, contradicting
the infinitude of $\pd_S(N)$.
\end{proof}

The next result contains parts~\eqref{thm01a}--\eqref{thm01e} 
of Theorem~\ref{thm01} from the introduction.

\begin{thm}\label{para11} 
Assume that $R$ is homologically of minimal multiplicity of type 
$(m,n,t)$, and set $r=n/m$.
\begin{enumerate}[\quad\rm(a)]
\item\label{para11a} 
If $R$ has a  canonical module $\om$, then
$\beta_{t+s}^{R}(\om)=r^{s}\cdot \beta_{t}^{R}(\om)$
for all $s\geq 0$.
\item\label{para11b}  
Assume that $n > m$ and $R$ has a  canonical module $\om$.
If $R$ is not Gorenstein, then the sequence $\{\beta_{i}^{R}(\om)\}$ grows
exponentially.
\item\label{para11c}  
If $n = m$ and $R$ has a  canonical module $\om$, 
then the sequence $\{\beta_{i}^{R}(\om)\}$ is eventually
constant.
\item\label{para11d}  
If $n < m$, then $R$ is Gorenstein.
\item\label{para11e}  
If $R$ is not Gorenstein, then $m\mid n$ and $n\geq 1$.
\end{enumerate}
\end{thm}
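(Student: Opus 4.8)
The key mechanism is the exact sequence from Definition~\ref{defn01}\eqref{defn01d}: since $\n^2 M = 0$, the module $M$ is a module over $S/\n$, hence a finite-dimensional $l$-vector space, and the filtration $0 \subseteq \n M \subseteq M$ gives a short exact sequence $0 \to \n M \to M \to M/\n M \to 0$ of $S$-modules in which $\n M \cong l^n$ and $M/\n M \cong l^m$ (using $m = \beta^S_0(M)$ and $n = \beta^S_0(\n M)$). The plan is to transport everything from $R$ to the ring $S$ of Definition~\ref{defn01}, apply Theorem~\ref{thm11} and Corollary~\ref{cor11} with $N = \omega_S$, and then descend conclusions about $S$ back to $R$ along the flat local map $\vf\colon R \to S$.

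First I would record the flat base-change facts. Because $\vf$ is flat local with Gorenstein closed fibre $S/\m S$, the ring $S$ is Cohen-Macaulay (Proposition~\ref{disc11b}), and if $R$ has a canonical module $\om$ then $\omega_S \cong \om \otimes_R S$; moreover flatness gives $\beta^S_i(\om \otimes_R S) = \beta^R_i(\om)$ for all $i$ (a minimal $R$-free resolution of $\om$ base-changes to a minimal $S$-free resolution, since $\n = \m S + (\text{fibre max ideal})$ and the fibre contributes nothing to the differentials). Also $R$ is Gorenstein if and only if $S$ is Gorenstein, equivalently $\om \otimes_R S \cong \omega_S$ is free, equivalently $\pd_S(\omega_S) < \infty$. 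With these identifications in place, parts~\eqref{para11a}--\eqref{para11c} follow by applying Theorem~\ref{thm11}\eqref{thm11b} and Corollary~\ref{cor11}\eqref{cor11a}--\eqref{cor11b} to $N = \omega_S$ and the sequence $0 \to l^n \to M \to l^m \to 0$, using Definition~\ref{defn01}\eqref{defn01c} to supply the hypothesis $\tor^S_i(\omega_S, M) = 0$ for all $i \geq t$, and then rewriting $\beta^S_i(\omega_S) = \beta^R_i(\om)$ and $r = n/m$.

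For part~\eqref{para11d}: if $n < m$, then Corollary~\ref{cor11c} applied with $N = \omega_S$ shows $\pd_S(\omega_S) < \infty$, so $\omega_S$ is free, hence $S$ is Gorenstein, hence $R$ is Gorenstein. For part~\eqref{para11e}: suppose $R$ is not Gorenstein. Then $S$ is not Gorenstein, so $\omega_S$ is not free, i.e.\ $\pd_S(\omega_S) = \infty$. Now Corollary~\ref{cor11d} with $N = \omega_S$ gives that $\n M \neq 0$ (which forces $n \geq 1$, since $n = 0$ would make $\n M \cong l^0 = 0$) and that $n/m$ is a positive integer, i.e.\ $m \mid n$. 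One subtlety to watch is that part~\eqref{para11e} does not assume $R$ has a canonical module, so the argument there should not pass through $\om$ at all but work directly with $\omega_S$, which always exists by Definition~\ref{defn01}\eqref{defn01a}; the ``$R$ not Gorenstein'' hypothesis still transfers to ``$S$ not Gorenstein'' purely by flat descent of the Gorenstein property, with no need for a canonical module over $R$.

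The main obstacle is the base-change bookkeeping: verifying carefully that $\omega_S \cong \om \otimes_R S$ and that Betti numbers are preserved under $\vf$, including the point that the closed fibre being Gorenstein (not merely Cohen-Macaulay) is what makes $\omega_{S/\m S}$ free so that the canonical module ascends cleanly along $\vf$. Once these standard flat-local-homomorphism facts are in hand, the remainder is a direct appeal to Theorem~\ref{thm11} and Corollary~\ref{cor11}, so I would state the base-change lemma explicitly (or cite it) and then keep the proof of Theorem~\ref{para11} short.
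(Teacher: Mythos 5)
Your proposal is correct and follows essentially the route the paper takes: base-change along the flat local map $\vf$ to get $\omega_S\cong\om\otimes_RS$ and $\beta^S_i(\omega_S)=\beta^R_i(\om)$, then invoke Theorem~\ref{thm11} and Corollary~\ref{cor11} with $N=\omega_S$ and the short exact sequence $0\to\n M\to M\to M/\n M\to 0$. The one small routing difference is in parts \eqref{para11d} and \eqref{para11e}, where $R$ need not have a canonical module: the paper handles this by first invoking Lemma~\ref{lem41}\eqref{lem41a} to replace $R$ by its completion (which is again homologically of minimal multiplicity of the same type and does admit $\om$), whereas you bypass completion entirely by transferring the Gorenstein property between $R$ and $S$ using the standard fact that, for a flat local map with Gorenstein closed fibre, $R$ is Gorenstein iff $S$ is. Both are valid; your version avoids a completion step at the cost of invoking flat ascent/descent of Gorensteinness, while the paper's version keeps the statement ``$R$ Gorenstein iff $\beta^R_i(\om)=0$ for $i\gg0$'' as the single Gorenstein criterion throughout.
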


\begin{proof} 
Using Lemma~\ref{lem41}\eqref{lem41a} we may assume that $R$ is complete,
so $R$ has a  canonical module $\om$ in~\eqref{para11d}--\eqref{para11e}.
Let $\vf\colon R\to S$ be as in Definition~\ref{defn01}.
The fact that $\vf$ is flat with Gorenstein closed fibre
implies that $\omega_S\cong S\otimes_R\om$ and
$\tor^R_i(S,\om)=0$ for all $i\geq 1$. It follows that
$\beta^R_i(\om)=\beta^S_i(\omega_S)$ for all $i$.
The desired conclusions now follow from
Theorem~\ref{thm11}\eqref{thm11b} and Corollary~\ref{cor11},
using the fact that $R$ is Gorenstein if and only if $\beta^R_i(\omega_R)=0$
for some $i\geq 1$, equivalently, for all $i\gg 0$.
\end{proof}

The following question is motivated by Theorem~\ref{para11}\eqref{para11e}.

\begin{question}\label{q11}
Assume that $R$ is not Gorenstein.
If $R$  is 
homologically of minimal multiplicity
of type $(r,rm,t)$,
must $R$ be
homologically of minimal multiplicity
of type $(1,m,t)$? 
If $R$  is 
strongly homologically of minimal multiplicity
of type $(r,rm)$,
must $R$ be
strongly homologically of minimal multiplicity
of type $(1,m)$? 
\end{question}

The next result gives two criteria that yield affirmative answers for Question~\ref{q11}

\begin{prop} \label{prop11}
Let $(S,\n,l)$ be a local  ring, and let $M$ and $N$ be 
finitely generated $S$-modules.
Let $m$, $n$ and $t$ be integers with $m\geq 1$, and
assume that there is an exact sequence of $S$-module homomorphisms
\begin{equation}\label{prop11e}
0\to l^{n}\to M\xra\tau l^{m}\to 0
\end{equation}
and that
 $\Tor_{i}^{S}(N,M)=0$
for $i\geq t$.
Assume that $\pd_S(N)$ is infinite, and set $r=n/m$ and
$e=\edim(S)=\beta^S_0(\n)$.
\begin{enumerate}[\quad\rm(a)]
\item \label{prop11a}
There is an equality
$\beta^S_0(M)=m$.
\item \label{prop11d}
There are inequalities
$r\leq \len_S(S/\ann_S(M))-1\leq e$.
\item \label{prop11b}
One has
$r=\len_S(S/\ann_S(M))-1$ if and only if $M\cong (S/\ann_S(M))^m$.
\item \label{prop11c}
One has
$r=e$ if and only if $M\cong (S/\n^2)^m$. 
\end{enumerate}
\end{prop}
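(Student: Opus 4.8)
The plan is to run the long exact sequence in $\Tor^S(N,-)$ against \emph{two} short exact sequences — the given one~\eqref{prop11e} and the intrinsic one $0\to\n M\to M\to M/\n M\to0$ — and to compare the two resulting recursions on the Betti numbers $\beta^S_i(N)$, exploiting that $\pd_S(N)=\infty$ forces every $\beta^S_i(N)$ to be positive. Before starting, set $d=\dim_l(\n M)$ and $b=\beta^S_0(M)=\dim_l(M/\n M)$. Because $l^m$ is annihilated by $\n$ we have $\n M\subseteq\ker\tau\cong l^n$, and because $\n^2M=0$ the submodule $\n M$ is an $l$-vector space; reducing the inclusion $\ker\tau\hookrightarrow M$ modulo $\n M$ gives an exact sequence $0\to l^{n-d}\to M/\n M\to l^m\to0$, whence $b=m+n-d$. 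We also record, via Corollary~\ref{cor11}\eqref{cor11d}, that $\n M\neq0$ and that $r=n/m$ is a positive integer.

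For part~\eqref{prop11a}: Theorem~\ref{thm11}\eqref{thm11a}, applied to~\eqref{prop11e} using the hypothesis $\Tor^S_i(N,M)=0$ for all $i\geq t$, gives $m\,\beta^S_{i+1}(N)=n\,\beta^S_i(N)$ for every $i\geq t$; applied instead to $0\to\n M\to M\to M/\n M\to0$ — whose outer terms are the $l$-vector spaces $\n M\cong l^d$ and $M/\n M\cong l^b$ — it gives $b\,\beta^S_{i+1}(N)=d\,\beta^S_i(N)$ for every $i\geq t$. Eliminating the positive quantities $\beta^S_i(N)$ and $\beta^S_{i+1}(N)$ from these two identities yields $nb=md$; substituting $b=m+n-d$ gives $d(m+n)=n(m+n)$, hence $d=n$ and therefore $b=m$. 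This is the equality $\beta^S_0(M)=m$ of~\eqref{prop11a}; it also shows $\n M=\ker\tau\cong l^n$ and $M/\n M\cong l^m$, identifications that are used below.

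For parts~\eqref{prop11d}, \eqref{prop11b} and~\eqref{prop11c} one works over $\bar S:=S/\ann_S(M)$; since $\n^2M=0$ we have $\n^2\subseteq\ann_S(M)$, so $\bar S$ is a quotient of $S/\n^2$ and $\len_S(\bar S)\leq\len_S(S/\n^2)=1+e$, the upper bound in~\eqref{prop11d}. For the lower bound, choose minimal generators $x_1,\dots,x_m$ of $M$ (legitimate by~\eqref{prop11a}); each $a\mapsto ax_j$ gives a surjection $\n\bar S\twoheadrightarrow\n x_j$, and $\n M=\sum_j\n x_j$, so $n=\dim_l(\n M)\leq\sum_j\dim_l(\n x_j)\leq m\dim_l(\n\bar S)=m(\len_S(\bar S)-1)$, i.e.\ $r\leq\len_S(\bar S)-1$. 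For~\eqref{prop11b}, ``$\Leftarrow$'' is the length identity $n=\dim_l(\n\bar S^m)=m(\len_S(\bar S)-1)$; for ``$\Rightarrow$'', if $r=\len_S(\bar S)-1$ then the cover $\varphi\colon\bar S^m\to M$ taking the standard basis to $x_1,\dots,x_m$ is minimal (an isomorphism modulo $\n$ by~\eqref{prop11a}), so $\ker\varphi\subseteq\n\bar S^m$, while $\dim_l(\n\bar S^m)=m(\len_S(\bar S)-1)=mr=n=\dim_l(\n M)=\dim_l(\varphi(\n\bar S^m))$ forces $\varphi$ to be injective on $\n\bar S^m$, hence $\ker\varphi=0$. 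Finally~\eqref{prop11c} is formal: if $r=e$ then the inequalities of~\eqref{prop11d} collapse, so $\len_S(\bar S)=1+e=\len_S(S/\n^2)$ and, with $\n^2\subseteq\ann_S(M)$, this forces $\ann_S(M)=\n^2$; now $r=\len_S(\bar S)-1$ and~\eqref{prop11b} give $M\cong(S/\n^2)^m$, the converse being the ``$\Leftarrow$'' direction of~\eqref{prop11b}.

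The only genuine idea is in part~\eqref{prop11a}: the given sequence~\eqref{prop11e} says nothing by itself about $\n M$, so one must introduce the intrinsic sequence $0\to\n M\to M\to M/\n M\to0$ and let the positivity of the Betti numbers of $N$ — which is where $\pd_S(N)=\infty$ enters — turn the comparison of the two recursions into the numerical equality $d=n$. Once~\eqref{prop11a} is in hand, parts~\eqref{prop11d}--\eqref{prop11c} are routine bookkeeping with $l$-dimensions, minimal covers, and lengths over $S/\ann_S(M)$.
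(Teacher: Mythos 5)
Your proof is correct, and the interesting divergence from the paper is in part~\eqref{prop11a}. The paper argues by contradiction: if $\beta^S_0(M)>m$, then $\ker\tau\cong l^n$ contains a minimal generator of $M$, so $M\cong l\oplus M'$; then $\tor^S_t(N,l)=0$ (as a direct summand of $\tor^S_t(N,M)=0$), contradicting $\pd_S(N)=\infty$. You instead run Theorem~\ref{thm11}\eqref{thm11a} against \emph{two} sequences with the same middle term $M$ — the given one and $0\to\n M\to M\to M/\n M\to0$ — and use positivity of the Betti numbers to cancel and solve for $d=\dim_l(\n M)$, getting $d=n$ and $b=m$ simultaneously. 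Both arguments are valid and about the same length; the paper's is a bit more structural (it isolates a free-summand obstruction), while yours is more computational and has the small side benefit of directly producing $\n M=\ker\tau$ and $M/\n M\cong l^m$, which you exploit later. For parts~\eqref{prop11d}--\eqref{prop11c} both proofs work with the minimal cover $\bar S^m\twoheadrightarrow M$ over $\bar S=S/\ann_S(M)$ and do length bookkeeping; the paper packages the kernel as $l^s$ with $s=m(a-1-r)$ and reads everything off the sign and vanishing of $s$, whereas you bound $\dim_l(\n M)$ directly via the surjections $\n\bar S\twoheadrightarrow\n x_j$ — equivalent accounting, different emphasis. One minor note on your last step: deducing ``$M\cong(S/\n^2)^m\Rightarrow r=e$'' from the $\Leftarrow$ of~\eqref{prop11b} silently uses that $\ann_S((S/\n^2)^m)=\n^2$ and $\len_S(S/\n^2)=e+1$; this is true and easy, but you might spell it out (the paper instead just computes $\n M\cong(\n/\n^2)^m\cong l^{em}$ directly, which is a touch cleaner for that direction).
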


\begin{proof}
Set $J=\ann_S(M)$ and $a=\len_S(S/J)$.

\eqref{prop11a}
The surjection $\tau\colon M\twoheadrightarrow l^m$
implies that $\beta^S_0(M)\geq m$. Suppose that $\beta^S_0(M)>m$.
It follows that $\ker(\tau)\cong l^n$ contains a minimal generator for $M$.
We conclude that $M\cong l\oplus M'$ for some submodule $M'\subseteq M$.
(To see this, let $x_1\in M$ be a minimal generator in $l^n$, and complete this
to a minimal generating sequence $x_1,\ldots,x_p$ for $M$. The module
$M/(x_2,\ldots,x_p)$ is cyclic and nonzero, generated by the residue of $x_1$,
which we denote $\ol{x_1}$.
Since $\n x_1=0$ it follows that $M/(x_2,\ldots,x_p)\cong l\ol{x_1}$.
The composition $lx_1\subseteq M\to M/(x_2,\ldots,x_p)\cong l\ol{x_1}$
is an isomorphism, so the surjection $M\to l$ splits.)
The condition $0=\tor_i^S(N,M)\cong\tor_i^S(N,M')\oplus \tor_i^S(N,l)$ for $i\geq t$
implies that $\tor_t^S(N,l)=0$,
contradicting the infinitude of $\pd_S(N)$.

\eqref{prop11d}
Since $\n^2M=0$, we have $\n^2\subseteq J\subseteq\n$
and hence
$$a-1\leq\len(S/\n^2)-1=e.$$
This is the second desired inequality.

There is an $S$-module epimorphism
$\pi\colon(S/J)^m\twoheadrightarrow M$. Since $m=\beta^S_0(M)$,
we conclude that $\ker(\pi)\subseteq\n (S/J)=\n/J$.
Since $\n^2\subseteq J$, we see that $\n\ker(\pi)=0$,
so $\ker(\pi)\cong l^s$ for some integer $s$.
Using the exact sequence
$$0\to l^s\to(S/J)^m\xra{\pi} M\to 0$$
we have the first equality in the next sequence
\begin{align*}
s
&=am-\len_S(M)
=am-(m+n)
=m(a-1-r).
\end{align*}
The second equality is from the sequence~\eqref{prop11e}.
The third equality is from the definition $r=n/m$.
Since $s\geq 0$ and $m> 0$, we have $a-1-r\geq 0$ that is,
$r\leq a-1$. This completes the proof of~\eqref{prop11d}.

For the rest of the proof, 
we continue with the notation from the proof of part~\eqref{prop11d}. 

\eqref{prop11b}
We have $M\cong (S/J)^m$ if and only if $\pi$ is an isomorphism,
that is, if and only if $l^s\cong \ker(\pi)=0$.
Since $s=m(a-1-r)$ and $m>0$, we conclude that $s=0$ if and only if $r=a-1$.

\eqref{prop11c}
Assume first that $r=e$.  Part~\eqref{prop11d} implies that
$r\leq a-1\leq e=r$ and thus $r=a-1$. Hence, part~\eqref{prop11b} 
yields an isomorphism
$M\cong (S/J)^m$. The surjection $S/\n^2\twoheadrightarrow S/J$
yields the  inequality in the next sequence
$$a=\len_S(S/J)\leq\len_S(S/\n^2)=e+1=r+1=a.$$
It follows that $\len_S(S/J)=\len_S(S/\n^2)$, so the surjection
$S/\n^2\twoheadrightarrow S/J$ is an isomorphism.
Hence, we have $J=\n^2$, and thus $M\cong (S/J)^m\cong (S/\n^2)^m$. 

For the converse, assume that $M\cong (S/\n^2)^m$. 
It follows that, in the exact sequence~\eqref{prop11e} we have
$l^{rm}\cong l^n\cong\n M\cong (\n/\n^2)^m\cong l^{em}$
and hence $r=e$.
\end{proof}

The next  results describe relations between
$m$, $n$, $\beta^S_1(N)$ and $\beta^S_0(N)$.

\begin{prop} \label{prop12}
Let $(S,\n,l)$ be a local  ring, and let $M$ and $N$ be 
finitely generated $S$-modules such that $\pd_S(N)$ is infinite.
Let $m$ and $n$ be integers with $m\geq 1$, and
assume that there is an exact sequence of $S$-module homomorphisms
\begin{equation}\label{prop12e}
0\to l^{n}\xra\alpha M\xra\tau l^{m}\to 0
\end{equation}
and that
 $\Tor_{i}^{S}(N,M)=0$
for $i\geq 1$.
Set $r=n/m$.
\begin{enumerate}[\quad\rm(a)]
\item \label{prop12a}
There is an inequality
$\beta^S_1(N)\leq \beta^S_0(N)r$.
\item \label{prop12d}
There is an equality
$\ker(N\otimes_S\tau)=\n(N\otimes_SM)$.
\item \label{prop12b}
One has $\beta^S_1(N)= \beta^S_0(N)r$ if and only if $\n(N\otimes_SM)=0$.
\end{enumerate}
\end{prop}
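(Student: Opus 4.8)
The plan is to apply the functor $N\otimes_S-$ to the exact sequence~\eqref{prop12e} and read off the consequences from the resulting long exact sequence in $\Tor^S(N,-)$, exactly as in the proof of Theorem~\ref{thm11}, but now keeping track of the map at the right-hand end rather than discarding it. Since $l^n$ and $l^m$ are $l$-vector spaces, we have $N\otimes_Sl^n\cong l^{n\beta^S_0(N)}$ and $\Tor^S_1(N,l^m)\cong l^{m\beta^S_1(N)}$, and the vanishing hypothesis $\Tor^S_i(N,M)=0$ for $i\geq 1$ leaves the four-term exact sequence
\begin{equation*}
0\to l^{m\beta^S_1(N)}\xra{\partial} l^{n\beta^S_0(N)}\xra{N\otimes_S\alpha} N\otimes_SM\xra{N\otimes_S\tau} l^{m\beta^S_0(N)}\to 0.
\end{equation*}
For part~\eqref{prop12a}, the injection $\partial$ forces $m\beta^S_1(N)\leq n\beta^S_0(N)$, and dividing by $m$ gives $\beta^S_1(N)\leq \beta^S_0(N)r$.

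For part~\eqref{prop12d}, the point is simply exactness: $\ker(N\otimes_S\tau)=\im(N\otimes_S\alpha)$, and this image is the submodule of $N\otimes_SM$ generated by the image of $l^{n\beta^S_0(N)}=N\otimes_Sl^n$. The thing to nail down is that this image equals $\n(N\otimes_SM)$. One containment is clear: $\alpha(l^n)\subseteq M$, and since $\n^2M=0$ one has $\n M\subseteq\ker(\tau)=\im(\alpha)$ (indeed $\tau(\n M)=\n\tau(M)=\n l^m=0$), hence after tensoring $\n(N\otimes_SM)\subseteq\im(N\otimes_S\alpha)=\ker(N\otimes_S\tau)$. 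For the reverse containment one uses that $l^n\cong\n M$ as an $S$-module — this is where one checks that the submodule $\im(\alpha)$ of $M$ is precisely $\n M$: a priori $\im(\alpha)=\ker(\tau)$ contains $\n M$, and since $M/\n M\cong l^{\beta^S_0(M)}$ with $\beta^S_0(M)=m$ by Proposition~\ref{prop11}\eqref{prop11a}, and $M/\ker(\tau)\cong l^m$, the surjection $M/\n M\twoheadrightarrow M/\ker(\tau)$ is an isomorphism of $m$-dimensional vector spaces, so $\ker(\tau)=\n M$. Therefore $\im(N\otimes_S\alpha)$ is the image of $N\otimes_S\n M\to N\otimes_SM$, which lands in $\n(N\otimes_SM)$, giving $\ker(N\otimes_S\tau)\subseteq\n(N\otimes_SM)$ and hence equality.

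For part~\eqref{prop12b}, observe that $\beta^S_1(N)=\beta^S_0(N)r$ holds if and only if the injection $\partial$ in the displayed four-term sequence is an isomorphism, equivalently $N\otimes_S\alpha$ is the zero map, equivalently $\im(N\otimes_S\alpha)=0$. By part~\eqref{prop12d} this image equals both $\ker(N\otimes_S\tau)$ and $\n(N\otimes_SM)$, so the condition is exactly $\n(N\otimes_SM)=0$. The main obstacle — the only genuinely substantive step — is the identification $\ker(\tau)=\n M$ needed in part~\eqref{prop12d}, which rests squarely on the equality $\beta^S_0(M)=m$ from Proposition~\ref{prop11}\eqref{prop11a}; everything else is bookkeeping with the long exact sequence and the fact that tensoring a finite-length $l$-vector space with $N$ multiplies dimensions by the relevant Betti number. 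One should also note at the outset that $\pd_S(N)$ infinite is what licenses invoking Proposition~\ref{prop11}\eqref{prop11a} (and guarantees $\beta^S_0(N),\beta^S_1(N)>0$, though that is not strictly needed for the stated equalities and inequalities).
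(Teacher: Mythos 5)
Your argument is correct and follows essentially the same route as the paper's own proof: both hinge on the four-term exact sequence obtained by tensoring~\eqref{prop12e} with $N$, on the identification $\ker(\tau)=\n M$ via $\beta_0^S(M)=m$ from Proposition~\ref{prop11}\eqref{prop11a}, and on reading off $\im(N\otimes_S\alpha)=\n(N\otimes_SM)$. The only cosmetic difference is in part~\eqref{prop12b}, where the paper computes $\ker(N\otimes_S\tau)\cong\coker(\gamma)\cong l^{m(\beta_0^S(N)r-\beta_1^S(N))}$ explicitly, while you phrase the same content as ``$\partial$ is an isomorphism iff $N\otimes_S\alpha$ vanishes''; these are equivalent.
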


\begin{proof}
For each index $i$, set $b_i=\beta^S_i(N)$.

\eqref{prop12a}
Apply $N\otimes_S-$ to the sequence~\eqref{prop12e} to obtain the following exact sequence
\begin{equation}
\label{eq2}
0\to\tor^S_1(N,l)^m\xra\gamma N\otimes_S l^{mr}\xra{N\otimes_S\alpha} 
N\otimes_SM\xra{N\otimes_S\tau} N\otimes_S l^m\to 0.
\end{equation}
Notice that we have
$$\tor^S_1(N,l)^m\cong (l^{b_1})^m\cong l^{b_1m}
\qquad\text{and}\qquad
N\otimes_S l^{mr}\cong l^{b_0mr}.
$$
The sequence~\eqref{eq2} implies that $\tor^S_1(N,l)^m\subseteq
N\otimes_Sl^{mr}$, so we have
$b_1m\leq b_0mr$. Since $m\geq 1$, this implies $b_1\leq b_0r$.

\eqref{prop12d}
Proposition~\ref{prop11}\eqref{prop11a} shows that $m=\beta^S_0(M)$ and
moreover, the surjection $\tau$ is naturally identified with the 
natural surjection $M\to M\otimes_Sl$.
Accordingly, we have $l^n\cong\n M$, so the sequence~\eqref{prop12e} has the form
$$0\to\n M\xra\alpha M\xra\tau l^m\to 0.$$
Thus, the sequence~\eqref{eq2} has the form
$$0\to\tor^S_1(N,l)^m\xra\gamma N\otimes_S\n M\xra{N\otimes_S\alpha} 
N\otimes_SM\xra{N\otimes_S\tau} N\otimes_S l^m\to 0.
$$
It follows that
$\n(N\otimes_SM)=\im(N\otimes_S\alpha)=\ker(N\otimes_S\tau)$.

\eqref{prop12b}
We have
$$\n(N\otimes_SM)=\ker(N\otimes_S\tau)\cong\coker(\gamma)\cong
l^{m(b_0r-b_1)}.$$
Hence, we have $\n(N\otimes_SM)=0$
if and only if $m(b_0r-b_1)=0$, that is, if and only if $b_1= b_0r$.
\end{proof}

\begin{cor} \label{cor12}
Let $R$ be a local ring with a canonical module $\om$.
If $R$ is homologically of minimal multiplicity of type $(m,n,1)$,
then $\beta^R_1(\om)\leq\beta^R_0(\om)n/m$.
\end{cor}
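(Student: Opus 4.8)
The plan is to read this inequality directly off Proposition~\ref{prop12}\eqref{prop12a}, applied over the ring $S$ furnished by Definition~\ref{defn01}, once the trivial Gorenstein case has been removed.

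First I would dispose of the case where $R$ is Gorenstein: then $\om\cong R$, so $\beta^R_1(\om)=0$ and the desired inequality holds trivially. Hence I may assume $R$ is not Gorenstein. Now fix a local homomorphism $\vf\colon R\to S$ and a nonzero finitely generated $S$-module $M$ witnessing that $R$ is homologically of minimal multiplicity of type $(m,n,1)$. Since $t=1$, Definition~\ref{defn01} provides $\tor^S_i(\omega_S,M)=0$ for all $i\geq 1$, together with $\n^2M=0$, $m=\beta^S_0(M)$, and $n=\beta^S_0(\n M)$. Because $\n^2M=0$, the submodule $\n M$ is an $l$-vector space, so $\n M\cong l^n$ and $M/\n M\cong l^m$; hence the canonical short exact sequence $0\to\n M\to M\to M/\n M\to 0$ has the form $0\to l^n\to M\to l^m\to 0$ required by Proposition~\ref{prop12}. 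Moreover, since $\vf$ is flat and local, $S$ is not Gorenstein either---if it were, then $R$ would be Gorenstein too---so $\pd_S(\omega_S)=\infty$. With all the hypotheses of Proposition~\ref{prop12} now in force for $N=\omega_S$, part~\eqref{prop12a} yields $\beta^S_1(\omega_S)\leq\beta^S_0(\omega_S)\cdot(n/m)$.

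To conclude, I would transport this down to $R$ exactly as in the proof of Theorem~\ref{para11}: flatness of $\vf$ and the Gorenstein closed fibre hypothesis force $\omega_S\cong S\otimes_R\om$ and $\tor^R_i(S,\om)=0$ for $i\geq 1$, so $\beta^R_i(\om)=\beta^S_i(\omega_S)$ for all $i$. Specializing to $i=0$ and $i=1$ turns the inequality above into $\beta^R_1(\om)\leq\beta^R_0(\om)\cdot(n/m)$, which is the claim. I expect no genuine difficulty here; the one point needing care is the reduction to the non-Gorenstein case, since that is precisely what guarantees the standing hypothesis $\pd_S(\omega_S)=\infty$ needed to invoke Proposition~\ref{prop12}.
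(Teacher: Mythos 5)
Your proof is correct and follows essentially the same approach the paper intends: dispose of the Gorenstein case trivially, then invoke Proposition~\ref{prop12}\eqref{prop12a} over the auxiliary ring $S$ with $N=\omega_S$, and transport the inequality down to $R$ via the identification $\beta^R_i(\om)=\beta^S_i(\omega_S)$ coming from flat base change with Gorenstein closed fibre. The paper's own proof is a one-line pointer to the argument of Theorem~\ref{para11} and Proposition~\ref{prop12}\eqref{prop12a}; you have simply unpacked exactly those steps, including the small but necessary observation that $S$ is not Gorenstein (hence $\pd_S(\omega_S)=\infty$) because a flat local map with Gorenstein closed fibre ascends and descends the Gorenstein property.
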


\begin{proof}
If $R$ is Gorenstein, then $\beta^R_1(\om)=0\leq\beta^R_0(\om)n/m$.
When $R$ is not Gorenstein,
argue as  in the proof of Theorem~\ref{para11} to derive the desired inequality
from Proposition~\ref{prop12}\eqref{prop12a}.
\end{proof}

Note that the hypotheses of parts~\eqref{prop13a} and~\eqref{prop13d} 
of the next result
hold automatically when $N=\omega_S$ and $M$ is in the Auslander class $\mathcal{A}(S)$.

\begin{thm} \label{prop13}
Let $(S,\n,l)$ be a local  ring, and let $M$ and $N$ be 
finitely generated $S$-modules such that $\pd_S(N)$ is infinite.
Let $m$ and $n$ be integers with $m\geq 1$, and
assume that there is an exact sequence of $S$-module homomorphisms
\begin{equation}\label{prop13e}
0\to l^{n}\to M\xra\tau l^{m}\to 0
\end{equation}
and that
 $\Tor_{i}^{S}(N,M)=0$
for $i\geq 1$.
Set $r=n/m$.
\begin{enumerate}[\quad\rm(a)]
\item \label{prop13a}
If $M\cong\Hom_S(N,N\otimes_SM)$, then $\beta^S_1(N)<\beta^S_0(N)r$.
\item \label{prop13d}
Assume that $\ext^1_S(N,N\otimes_SM)=0$
and $\len_S(\Hom_S(N,N\otimes_SM))=\len_S(M)$. Then there are equalities 
\begin{align*}
\beta^S_1(N)&=\textstyle\frac{1}{2}
\left[\beta^S_0(N)(r+1)\pm\sqrt{\beta^S_0(N)^2(r+1)^2-4(\beta^S_0(N)^2-1)(r+1)}\right]\\
&=\textstyle\frac{1}{2}
\left[\beta^S_0(N)(r+1)\pm\sqrt{(r+1)[\beta^S_0(N)^2(r-3)+4]}\right].
\end{align*}
In particular, the integer 
$$\beta^S_0(N)^2(r+1)^2-4(\beta^S_0(N)^2-1)(r+1)
=(r+1)[\beta^S_0(N)^2(r-3)+4]$$ 
is a perfect square.
\item \label{prop13b}
If $N=\omega_S\ncong S$ and $M\in\mathcal{A}(S)$, then 
$$\beta^S_1(\omega_S)=\beta^S_0(\omega_S)(r^2-1)/r 
\qquad\text{and}\qquad
r=\frac{\beta^S_1(\omega_S)+\sqrt{\beta^S_1(\omega_S)^2+4\beta^S_0(\omega_S)^2}}
{2\beta^S_0(\omega_S)}.$$
\end{enumerate}
\end{thm}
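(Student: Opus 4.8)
The plan is to prove the three parts in the order \eqref{prop13a}, \eqref{prop13d}, \eqref{prop13b}, first recording preliminaries common to all of them. Write $b_i=\beta^S_i(N)$ and $P=N\otimes_S M$. Since $\pd_S(N)$ is infinite, Proposition~\ref{prop11}\eqref{prop11a} gives $\beta^S_0(M)=m$, so \eqref{prop13e} is the natural presentation $0\to\n M\to M\to l^m\to0$ with $\n M\cong l^n$; hence $\n^2M=0$, and since $\n P$ is a homomorphic image of $N\otimes_S\n M\cong l^{nb_0}$ we also get $\n^2P=0$, so $P$ has finite length with $\beta^S_0(P)=b_0m$. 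Tensoring \eqref{prop13e} with $N$ gives, as in the proof of Proposition~\ref{prop12}, an exact sequence $0\to\tor^S_1(N,l)^m\to l^{nb_0}\to P\to l^{mb_0}\to0$, whence $\len_S(P)=m[b_0(r+1)-b_1]$ and $\len_S(\n P)=m(b_0r-b_1)$. Put $a=b_0m$ and $c=m(b_0r-b_1)$, so $P/\n P\cong l^a$ and $\n P\cong l^c$. Finally, from a minimal free resolution of $N$ one has $\Hom_S(N,l^j)\cong l^{jb_0}$ and $\ext^i_S(N,l^j)\cong l^{jb_i}$ for all $i,j$, and by Corollary~\ref{cor11}\eqref{cor11d} the ratio $r=n/m$ is a positive integer.

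For \eqref{prop13a}: Proposition~\ref{prop12}\eqref{prop12a} gives $b_1\leq b_0r$, so it suffices to exclude equality. If $b_1=b_0r$, then Proposition~\ref{prop12}\eqref{prop12b} forces $\n P=0$, so $P\cong l^a$ with $a=b_0m\geq1$, and then $M\cong\Hom_S(N,P)\cong l^{ab_0}$ has $\n M=0$; thus $M\cong l^m$ and $\tor^S_i(N,M)\cong\tor^S_i(N,l)^m=0$ for $i\geq1$, forcing $\tor^S_i(N,l)=0$ for all $i\geq1$ and contradicting $\pd_S(N)=\infty$. Hence $b_1<b_0r$, and in particular $c>0$.

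For \eqref{prop13d}: apply $\Hom_S(N,-)$ to $0\to\n P\to P\to l^a\to0$. Since $\ext^1_S(N,P)=0$, the long exact sequence truncates to $0\to l^{cb_0}\to\Hom_S(N,P)\to l^{ab_0}\to l^{cb_1}\to0$, so $\len_S(\Hom_S(N,P))=(a+c)b_0-cb_1=\len_S(P)\,b_0-cb_1$. Equating with the hypothesis $\len_S(\Hom_S(N,P))=\len_S(M)=m(r+1)$, substituting the values of $a$, $c$, $\len_S(P)$ above, and dividing by $m$, one obtains after a routine simplification the quadratic equation $b_1^2-b_0(r+1)b_1+(r+1)(b_0^2-1)=0$. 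The quadratic formula gives the two displayed expressions for $\beta^S_1(N)$; since $b_0$, $b_1$, and $r$ are integers, the discriminant equals $(2b_1-b_0(r+1))^2$ and is therefore a perfect square.

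For \eqref{prop13b}: here $N=\omega_S\ncong S$, so $S$ is Cohen-Macaulay and not Gorenstein, whence $\pd_S(\omega_S)=\infty$ and $b_i>0$ for every $i$; and $M\in\cata(S)$ supplies both $M\cong\Hom_S(\omega_S,P)$ (so part~\eqref{prop13a} applies and $c>0$) and $\ext^i_S(\omega_S,P)=0=\tor^S_i(\omega_S,M)$ for all $i\geq1$. Applying $\Hom_S(\omega_S,-)$ to $0\to\n P\to P\to l^a\to0$ and using $\ext^1_S(\omega_S,P)=0=\ext^2_S(\omega_S,P)$ produces an isomorphism $\ext^1_S(\omega_S,l^a)\cong\ext^2_S(\omega_S,\n P)$, i.e. $ab_1=cb_2$; and Theorem~\ref{thm11}\eqref{thm11a} (with $t=1$, using $\tor^S_1(\omega_S,M)=0=\tor^S_2(\omega_S,M)$) gives $b_2=rb_1$. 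Cancelling $b_1>0$ yields $a=cr$, that is $b_0m=rm(b_0r-b_1)$, hence $\beta^S_1(\omega_S)=\beta^S_0(\omega_S)(r^2-1)/r$; rewriting this as $b_0r^2-b_1r-b_0=0$ and taking the unique positive root in $r$ gives the second displayed formula. The part I expect to be the real obstacle is \eqref{prop13b}: the quadratic of \eqref{prop13d} admits two roots and the inequality of \eqref{prop13a} need not separate them, so one genuinely needs the \emph{higher} Ext-vanishing available in the Auslander class (both $\ext^1$ and $\ext^2$, not just $\ext^1$) to manufacture the supplementary relation $ab_1=cb_2$ and then feed in the recursion $b_2=rb_1$ from Theorem~\ref{thm11}; the remainder is bookkeeping with $a$, $c$, $\len_S(P)$ and the identifications $\Hom_S(N,l^j)\cong l^{jb_0}$, $\ext^i_S(N,l^j)\cong l^{jb_i}$.
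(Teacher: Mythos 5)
Your proof is correct and follows essentially the same approach as the paper's. The only cosmetic difference is in part~\eqref{prop13b}: you extract the single relation $ab_1=cb_2$ from the truncated long exact sequence (using $\Ext^1$ and $\Ext^2$ vanishing), whereas the paper packages the same computation as an application of Theorem~\ref{thm11'}\eqref{thm11'b} to derive the full recursion $b_i=b_1\bigl(b_0/(b_0r-b_1)\bigr)^{i-1}$ and then compares with $b_i=r^{i-1}b_1$; both routes reduce to the same cancellation at $i=2$.
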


\begin{proof}
For each index $i$, set $b_i=\beta^S_i(N)$.
Note that Corollary~\ref{cor11}\eqref{cor11c} implies
that $r\geq 1$ and $\n M\neq 0$.

\eqref{prop13a}
Since we have $\n M\neq 0$, the isomorphism
$M\cong\Hom_S(N,N\otimes_SM)$
implies that $\n(N\otimes_SM)\neq 0$. The conclusion 
$b_1<b_0r$ follows from parts~\eqref{prop12a} and~\eqref{prop12b}
of Proposition~\ref{prop12}.

\eqref{prop13d}
By definition, we have $N\otimes_Sl^m\cong l^{b_0m}$.
We have seen that $\ker(N\otimes_S\tau)\cong l^{m(b_0r-b_1)}$,
so the sequence~\eqref{eq2} yields the next exact sequence:
$$0\to l^{m(b_0r-b_1)}\to N\otimes_SM\xra{N\otimes_S\tau}l^{b_0m}\to 0.$$
Our Ext-vanishing assumption implies that the associated long exact sequence
in $\ext_S(N,-)$ begins as follows
\begin{align*}
0
\to\Hom_S(N,l)^{m(b_0r-b_1)}
&\to\Hom_S(N,N\otimes_S M)\\
&\to\Hom_S(N, l)^{b_0m}
\to\ext^1_S(N,l)^{m(b_0r-b_1)}
\to 0.
\end{align*}
Using the standard isomorphism
$\ext^i_S(N,l)\cong l^{b_i}$, we conclude that
this sequence has the following form:
$$
0\to l^{b_0m(b_0r-b_1)}
\to\Hom_S(N,N\otimes_S M)
\to l^{b_0^2m}
\to l^{b_1m(b_0r-b_1)}
\to 0.$$
Thus, our length assumption explains the second equality in the next sequence
\begin{align*}
b_1m(b_0r-b_1)
&=b_0^2m-\len_S(\Hom_S(N,N\otimes_S M))+b_0m(b_0r-b_1)\\
&=b_0^2m-\len_S(M)+b_0m(b_0r-b_1)\\
&=b_0^2m-m(r+1)+b_0m(b_0r-b_1).
\end{align*}
Dividing by $m$ and simplifying, we find that
$$b_1^2-b_0(r+1)b_1+(b_0^2-1)(r+1)=0.$$
The desired conclusions now follow from the quadratic formula.

\eqref{prop13b}
Employ the notation of the exact sequence~\eqref{eq2}.
We have shown that
$$\ker(\omega_S\otimes_S\tau)\cong\coker(\gamma)\cong l^{m(b_0r-b_1)}$$
so the exact sequence~\eqref{eq2} provides the next exact sequence
$$0\to l^{m(b_0r-b_1)}\to \omega_S\otimes_SM\to l^{mb_0}\to 0.$$
Furthermore, the condition $M\in \mathcal{A}(S)$ implies that
$\ext^i_S(\omega_S,\omega_S\otimes_SM)=0$ for all $i\geq 1$.
We conclude from Theorem~\ref{thm11'}\eqref{thm11'b} that
$$b_i=b_1\left(\frac{mb_0}{m(b_0r-b_1)}\right)^{i-1}=b_1\left(\frac{b_0}{b_0r-b_1}\right)^{i-1}$$
for all $i\geq 1$. On the other hand, we know that
$b_i=r^{i-1}b_1$ for all $i\geq 1$. Since we are assuming that $b_i\neq 0$ for all $i$,
we conclude that $r=b_0/(b_0r-b_1)$.
Solve this equation for $b_1$ to derive the first desired equality.
For the second equality, 
substitute $b_1=b_0(r^2-1)/r$ into the expression
$\frac{b_1+\sqrt{b_1^2+4b_0^2}}{2b_0}$ and simplify.
\end{proof}

\begin{cor} \label{prop13z}
Assume that $R$ is strongly homologically of minimal multiplicity of type 
$(m,n)$ and with canonical module $\om\ncong R$.
If $r=n/m$, then
$$\beta^R_1(\omega_R)=\beta^R_0(\omega_R)(r^2-1)/r 
\qquad\text{and}\qquad
r=\frac{\beta^R_1(\omega_R)+\sqrt{\beta^R_1(\omega_R)^2+4\beta^R_0(\omega_R)^2}}
{2\beta^R_0(\omega_R)}.$$
\end{cor}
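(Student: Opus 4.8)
The plan is to deduce this directly from Theorem~\ref{prop13}\eqref{prop13b} by passing along a flat base change, exactly as in the proof of Theorem~\ref{para11}. First, by Lemma~\ref{lem41}\eqref{lem41b} we may replace $R$ with its $\m$-adic completion and thereby assume $R$ is complete; this alters neither the Betti numbers $\beta^R_i(\omega_R)$, nor the type $(m,n)$, nor the hypothesis $\omega_R\ncong R$. Fix a homomorphism $\vf\colon(R,\m,k)\to(S,\n,l)$ and an $S$-module $M$ as in Definition~\ref{defn01}, so that $M\in\cata(S)$, $\n^2M=0$, $m=\beta^S_0(M)$, and $n=\beta^S_0(\n M)$. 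Since $\vf$ is flat and local with Gorenstein closed fibre $S/\m S$, one has $\omega_S\cong S\otimes_R\omega_R$ and $\tor^R_i(S,\omega_R)=0$ for all $i\geq1$, whence $\beta^R_i(\omega_R)=\beta^S_i(\omega_S)$ for every $i$; moreover $R$ is Gorenstein if and only if $S$ is, so the hypothesis $\omega_R\ncong R$ forces $\omega_S\ncong S$ and hence $\pd_S(\omega_S)=\infty$.

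Next set $N=\omega_S$. The exact sequence $0\to\n M\to M\to M/\n M\to0$ takes the form $0\to l^n\to M\to l^m\to0$ because $\n^2M=0$ makes $\n M$ and $M/\n M$ into $l$-vector spaces of the stated dimensions, and the condition $M\in\cata(S)$ gives $\tor^S_i(\omega_S,M)=0$ for all $i\geq1$. Thus every hypothesis of Theorem~\ref{prop13}\eqref{prop13b} holds for this $M$, $N$, $m$, $n$, and $r=n/m$, and that result yields
$$\beta^S_1(\omega_S)=\beta^S_0(\omega_S)(r^2-1)/r\qquad\text{and}\qquad r=\frac{\beta^S_1(\omega_S)+\sqrt{\beta^S_1(\omega_S)^2+4\beta^S_0(\omega_S)^2}}{2\beta^S_0(\omega_S)}.$$
Replacing $\beta^S_i(\omega_S)$ by $\beta^R_i(\omega_R)$ throughout gives the claimed formulas.

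There is no essential difficulty here: the argument is simply the specialization of Theorem~\ref{prop13}\eqref{prop13b} to $N=\omega_S$. The only points that require a moment's care are the base-change identities $\omega_S\cong S\otimes_R\omega_R$ and $\tor^R_i(S,\omega_R)=0$ for a flat local map with Gorenstein closed fibre---the same facts already invoked in proving Theorem~\ref{para11}---together with the verification that $\pd_S(\omega_S)$ is infinite and $M\in\cata(S)$, so that Theorem~\ref{prop13}\eqref{prop13b} genuinely applies. The Gorenstein case needs no separate treatment, since it is excluded by the assumption $\omega_R\ncong R$.
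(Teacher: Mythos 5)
Your proposal is correct and follows essentially the same route as the paper: reduce to the setup of Theorem~\ref{prop13}\eqref{prop13b} via the base change $\vf\colon R\to S$, using $\omega_S\cong S\otimes_R\omega_R$ and the resulting identity $\beta^R_i(\omega_R)=\beta^S_i(\omega_S)$. The paper's proof is a one-line pointer to exactly this argument; you have simply spelled out the intermediate verifications (the passage to the completion, incidentally, is harmless but not strictly needed since $R$ already has a canonical module by hypothesis).
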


\begin{proof}
This follows directly from Theorem~\ref{prop13}\eqref{prop13b}
because $\beta^R_i(\omega_R)=\beta^S_i(\omega_S)$.
\end{proof}

The following example is from~\cite[(3.4)]{gasharov:bpclr}.
It demonstrates how our results can yield exact values for the Betti numbers of
canonical modules. 
It also  shows that,
if $R$  is 
strongly homologically of minimal multiplicity
with $S$ and $M$ as in Definition~\ref{defn01},
then $M$ may not be a direct sum of cyclic $S$-modules.
Similar arguments yield the 
Betti numbers of the canonical modules for the rings constructed 
in~\cite{avramov:pmivpd}.

\begin{ex} \label{blah}
Let $k$ be a field and let $\alpha\in k$ such that $\alpha\neq 0,1,-1$.
Consider the polynomial ring $A=k[X_1,X_2,X_3,X_4]$ and the ideal 
$I\subseteq A$ generated by the following polynomials:
$$\alpha X_1X_3+X_2X_3, \quad
X_1X_4+X_2X_4, \quad
X_3^2, \quad
X_4^2, \quad
X_1^2, \quad
X_2^2, \quad
X_3X_4.
$$
The ring $R=A/I$ is artinian and local with maximal ideal
$\m=(x_1,x_2,x_3,x_4)R$,
and $\m^3=0$. (Here $x_i$ denotes the image of $X_i$ in $R$.)
For each integer $n$, set 
$$d_n=\begin{pmatrix}x_1 & \alpha^nx_3+x_4 \\ 0 & x_2\end{pmatrix}.$$
Consider the following chain complex of $R$-modules
$$G=\cdots\xra{d_{n+1}} R^2\xra{d_{n}} R^2\xra{d_{n-1}}\cdots$$
and the $R$-module $M=\im(d_0)$. Let $\om$ denote a canonical module for $R$.

Arguing as in~\cite[(3.1)]{gasharov:bpclr}, one has the following facts.
The complexes $G$ and $\hom_R(G,R)$ are exact.
In the language of~\cite[(4.1.2)]{christensen:gd},
this means that $G$ is a ``complete resolution'' of $M$ by finite free modules.
Using~\cite[(4.1.3),(4.2.6),(4.4.13)]{christensen:gd}, we conclude that
$M\in\cata(R)$. Also, one has
$\m^2M=0$ and $\beta^R_0(M)=2$ and $\beta^R_0(\m M)=6$,
so the ring $R$ is strongly homologically of minimal multiplicity
of type $(6,2)$. In particular $\len_R(M)=8$, and the complex
$$G'=\cdots\xra{d_{2}} R^2\xra{d_{1}} R^2\to 0$$
is a minimal free resolution of $M$.
The gist of~\cite[(3.4)]{gasharov:bpclr} is that
\begin{equation} \label{blah1}
\ker(d_{n+2})\not\cong \ker(d_n)
\end{equation}
for all $n\geq 1$.

The socle of $R$ is $\m^2$, which has basis $x_1x_2, x_1x_3,x_1x_4$.
Hence, we have $\beta^R_0(\om)=3$. 
Corollary~\ref{prop13z} implies that
$\beta^R_1(\om)=8$, and Theorem~\ref{para11}\eqref{para11a} yields the formula
$\beta^R_n(\om)=8\cdot 3^{n-1}$ for all $n\geq 1$.
\footnote{Preliminary computations were performed using Macaulay 2~\cite{M2}.}

We claim that $M$ is indecomposable. 
By way of contradiction,
suppose that $M\cong M_1\oplus M_2$ where $M_1$ and $M_2$ are both nonzero.
The equality $\beta^R_0(M)=2$ implies that each $M_i$ is cyclic.
It follows that $\pd_R(M_i)=\infty$ for $i=1,2$. Indeed, if $\pd_R(M_i)$ is finite, then
the fact that $R$ is artinian implies that $M_i$ is free. Since $M_i$ is cyclic, we have
$M_i\cong R$, and so 
$$8=\len_R(M)=\len_R(M_1)+\len_R(M_2)>\len_R(M_i)=8$$
which is impossible. 

The resolution $G'$ shows that $\beta_n^R(M)=2$ for all $n\geq 0$.
It follows that $\beta_n^R(M_i)=1$ for all $n\geq 0$ and for $i=1,2$. 
Let $F_i$ be the minimal free resolution of $M_i$ with $n$th differential $d_{i,n}$.
From~\cite[(3.8)]{gasharov:bpclr}, it follows that there is an integer $n\geq 1$ such that
$\ker(d_{i,n+2})\cong\ker(d_{i,n})$ for $i=1,2$. 
The uniqueness of minimal free resolutions implies that
$G'\cong F_1\oplus F_2$, and hence
$$\ker(d_{n+2})\cong \ker(d_{1,n+2})\oplus\ker(d_{2,n+2})\cong \ker(d_{1,n})\oplus\ker(d_{2,n})
\cong \ker(d_n).$$
This contradicts~\eqref{blah1}. Thus $M$ is indecomposable, as claimed.
\end{ex}

The next result contains 
Theorem~\ref{thm01}\eqref{thm01f} from the introduction.

\begin{thm}\label{para11s} 
Assume that $R$ is strongly homologically of minimal multiplicity of type 
$(m,n)$.
If $n = m$, then $R$ is Gorenstein.
\end{thm}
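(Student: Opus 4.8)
The plan is to reduce to the case that $R$ is complete and then read the conclusion straight off Corollary~\ref{prop13z}, the point being that the hypothesis $n=m$ forces $r:=n/m=1$ and hence kills $\beta^{R}_{1}(\om)$.

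First I would invoke Lemma~\ref{lem41}\eqref{lem41b}, with $\fa=\m$, to assume without loss of generality that $R$ is complete; this is harmless, since $R$ is Gorenstein precisely when its $\m$-adic completion is. Since $R$ is strongly homologically of minimal multiplicity, it is homologically of minimal multiplicity by Remark~\ref{disc11}, hence Cohen-Macaulay by Proposition~\ref{disc11b}; being complete, it therefore admits a canonical module $\om$.

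Now suppose, toward a contradiction, that $R$ is not Gorenstein, so that $\om\ncong R$. Because $n=m$ we have $r=n/m=1$, and hence $r^{2}-1=0$. Corollary~\ref{prop13z} then gives
$$\beta^{R}_{1}(\om)=\beta^{R}_{0}(\om)\cdot\frac{r^{2}-1}{r}=0.$$
But $R$ is Gorenstein if and only if $\beta^{R}_{i}(\om)=0$ for some $i\geq 1$ (as recorded in the proof of Theorem~\ref{para11}); taking $i=1$, we conclude that $R$ is Gorenstein, contradicting our assumption, which completes the argument.

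There is essentially no obstacle here: all of the real content is carried by Theorem~\ref{prop13}\eqref{prop13b}, hence by Corollary~\ref{prop13z}, whose hypotheses — that $\omega_{S}\ncong S$ has infinite projective dimension, that $M$ belongs to $\cata(S)$, and that $\n^{2}M=0$ produces a short exact sequence $0\to l^{n}\to M\to l^{m}\to 0$ — are exactly what the definition of ``strongly homologically of minimal multiplicity'' supplies. One could also bypass the completion step entirely and argue on $S$: since $\vf$ is flat local with Gorenstein closed fibre, $R$ is Gorenstein if and only if $S$ is, and were $S$ not Gorenstein then $\pd_{S}(\omega_{S})=\infty$, so Theorem~\ref{prop13}\eqref{prop13b} applied with $N=\omega_{S}$ and the sequence $0\to\n M\to M\to M/\n M\to 0$ would force $\beta^{S}_{1}(\omega_{S})=0$, i.e.\ $\omega_{S}$ free and $S$ Gorenstein — the same contradiction.
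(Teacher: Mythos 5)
Your proof is correct and follows essentially the same route as the paper's: reduce to the complete case via Lemma~\ref{lem41}\eqref{lem41b}, note that $n=m$ gives $r=1$, and apply Corollary~\ref{prop13z} to conclude $\beta^R_1(\om)=0$, hence $R$ is Gorenstein. Your explicit handling of the case $\om\cong R$ (via contradiction) is slightly more careful than the paper's terse invocation of the corollary, but it is the same argument.
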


\begin{proof}
Using Lemma~\ref{lem41}\eqref{lem41b}, we assume that $R$ is complete.
Hence $R$ has a canonical module $\om$.
The assumption $m=n$ translates as $r=1$, so
Corollary~\ref{prop13z} implies that $\beta^R_1(\om)=0$.
It follows that $R$ is Gorenstein.
\end{proof}

The following question asks if the conclusion of Theorem~\ref{para11s}
holds when $R$ is only assumed to be 
homologically of minimal multiplicity.

\begin{question}
Assume that $R$ is homologically of minimal multiplicity of type 
$(m,n,t)$.
If $n = m$, must $R$ be Gorenstein?
\end{question}

\section{Alternate Characterizations} \label{sec3}

In this section, we provide alternate characterizations of the 
rings that are (strongly) homomologically of minimal multiplicity.
The first of these results is Theorem~\ref{disc11d} which says that
in the definition of ``homologically of minimal multiplicity'' one can assume
that the ring $S$ is complete with algebraically closed residue field
and that the closed fibre $S/\m S$ is regular.
In preparation, we recall some background information on
local ring homomorphisms.

\begin{defn} \label{defn41}
Let $\vf\colon R\to S$ be a local ring homomorphism.
A \emph{Cohen factorization} of $\vf$ is a diagram of local ring homomorphisms
$R\xra{\dot\vf}R'\xra{\vf'}S$
satisfying the following conditions:
\begin{enumerate}[\quad(1)]
\item
one has $\vf=\vf'\dot\vf$,
\item 
the map $\dot\vf$ is flat with regular closed fibre $R'/\m R'$,
\item 
the local ring $R'$ is complete, and
\item
the map $\vf'$ is surjective.
\end{enumerate}
\end{defn}

\begin{disc} \label{disc41}
Let $\vf\colon R\to S$ be a local ring homomorphism.
If $\vf$ admits a Cohen factorization, then the ring $S$ is a homomorphic image
of a complete local ring, so it is complete. Conversely, if $S$ is complete,
then $\vf$ admits a Cohen factorization by~\cite[(1.1)]{avramov:solh}.
\end{disc}

\begin{defn} \label{defn41'}
Let $(R,\m,k)$ be a local ring. The
\emph{$i$th Bass number} of $R$ is the integer
$\mu^i_R(R)=\rank_k(\ext^i_R(k,R))$.

Let 
$\vf\colon (R,\m)\to (S,\n)$ be a local ring homomorphism,
and assume that $R$ is Cohen-Macaulay. 
The homomorphism $\vf$ has 
\emph{finite  flat dimension} 
if $S$ has finite flat dimension as an $R$-module, that is,
if $S$ admits a bounded resolution by flat $R$-modules.
The homomorphism $\vf$ is \emph{Gorenstein}
if it has finite flat dimension and 
$\mu^{i+\depth(S)}_S(S)=\mu^{i+\depth(R)}_R(R)$ for all $i$.
An ideal $I\subset R$ is \emph{Gorenstein} if the natural
surjection $R\to R/I$ is Gorenstein.
\end{defn}

\begin{disc} \label{disc41'}
Let $\vf\colon (R,\m,k)\to(S,\n,l)$ be a local ring homomorphism.
If $\vf$ is flat, then it has finite flat dimension.
Also, when $\vf$ is flat, it is Gorenstein if and only if the closed fibre $S/\m S$ is Gorenstein;
see~\cite[(4.2)]{avramov:lgh}.
An ideal $I\subset R$ generated by an $R$-regular sequence is Gorenstein.
An ideal $I\subset R$ is Gorenstein if and only if
the $R$-module $R/I$ is perfect and $\beta^{R}_g(R/I)=1$ where $g=\grade_{R}(R/I)=\pd_{R}(R/I)$;
see~\cite[(4.3)]{avramov:lgh}.
If $\vf$ has finite flat dimension and $\psi\colon S\to T$ is another local
homomorphism of finite flat dimension, then~\cite[(4.6)]{avramov:lgh}
implies that the composition $\psi\vf$ is Gorenstein if and only if
$\psi$ and $\vf$ are both Gorenstein.

Assume that $\vf$ admits a Cohen factorization
$R\xra{\dot\vf}R'\xra{\vf'}S$.
The map $\vf$ has finite flat dimension if and only if
$\pd_{R'}(S)$ is finite; see~\cite[(3.2)]{avramov:solh}.
The map $\vf$ is Gorenstein if and only if
$\ker(\vf')$ is a Gorenstein ideal of $R'$;
see~\cite[(3.11)]{avramov:solh}.

Assume that $\vf$ is Gorenstein and
that $S$ is Cohen-Macaulay.
Since $\pd_{R'}(S)$ is finite, it follows that $R'$ is Cohen-Macaulay.
Since $R'$ and $S$ are both complete, they each admit a canonical module,
and~\cite[(5.7)]{christensen:scatac} implies that
$\omega_S\cong S\otimes_{R'}\omega_{R'}$
and $\tor^{R'}_i(S,\omega_{R'})=0$ for all $i\geq 1$.
\end{disc}

\begin{thm}\label{disc11d}
A local  ring  $R$ is homologically of minimal multiplicity
of type $(m,n,t)$
if and only if 
there exists a 
local ring homomorphism 
$\vf\colon(R,\fm,k)\to (S,\fn,l)$ and a finitely generated $S$-module $M\neq 0$
such that 
\begin{enumerate}[\quad\rm(1)]
\item\label{disc11d1}
the ring $S$ is complete and Cohen-Macaulay
with canonical module $\omega_S$, and $l$ is algebraically closed,
\item\label{disc11d2}
the map $\vf$ is flat with regular closed fibre $S/\m S$,
\item\label{disc11d3}
one has 
$\tor^S_i(\omega_S,M)=0$ for $i\geq t$, and
\item \label{disc11d4}
one has $\n^2M=0$ and $m=\beta^S_0(M)$ and $n=\beta^S_0(\n M)$.
\end{enumerate}
\end{thm}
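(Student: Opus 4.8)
The plan is to establish the ``only if'' direction, as the converse is immediate: a flat local homomorphism with regular closed fibre in particular has Gorenstein closed fibre, so conditions~\eqref{disc11d1}--\eqref{disc11d4} already subsume the axioms of Definition~\ref{defn01}, whence $R$ is homologically of minimal multiplicity of type $(m,n,t)$. So assume $R$ is homologically of minimal multiplicity of type $(m,n,t)$, witnessed by a local homomorphism $\vf\colon(R,\fm,k)\to(S,\fn,l)$ and a finitely generated $S$-module $M\neq 0$ as in Definition~\ref{defn01}. The argument improves this data in two steps.

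\emph{Step 1: completion and enlargement of the residue field.} Applying Remark~\ref{disc15} to $S$ with the inclusion $l\hookrightarrow\ol l$ of $l$ into an algebraic closure produces a flat local homomorphism $S\to S'$ with $S'$ complete, residue field $\ol l$, and $\fm_{S'}=\fn S'$. Set $M'=S'\otimes_S M$; it is a finitely generated $S'$-module, nonzero because $S\to S'$ is faithfully flat. Since $S\to S'$ is flat with closed fibre the field $\ol l$, the ring $S'$ is Cohen-Macaulay with canonical module $\omega_{S'}\cong S'\otimes_S\omega_S$ (as in the proof of Proposition~\ref{disc11a}); the composite $R\to S\to S'$ is flat, and its closed fibre $S'/\fm S'\cong S'\otimes_S(S/\fm S)$ is a flat local extension of the Gorenstein ring $S/\fm S$ with regular closed fibre $\ol l$, hence is Gorenstein. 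Flat base change along $S\to S'$ gives $\tor^{S'}_i(\omega_{S'},M')\cong S'\otimes_S\tor^S_i(\omega_S,M)=0$ for $i\geq t$, while $\fm_{S'}^2M'=0$, $\beta^{S'}_0(M')=\beta^S_0(M)=m$, and $\beta^{S'}_0(\fm_{S'}M')=\beta^S_0(\fn M)=n$. Thus the pair $(R\to S',M')$ still satisfies the axioms of Definition~\ref{defn01}, and now $S'$ is complete with algebraically closed residue field.

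\emph{Step 2: passing to a Cohen factorization.} Write $\psi\colon R\to S'$ for the composite above. Being flat, $\psi$ has finite flat dimension, and having Gorenstein closed fibre it is a Gorenstein homomorphism (Remark~\ref{disc41'}); since $S'$ is complete, $\psi$ admits a Cohen factorization $R\xra{\dot\psi}R'\xra{\psi'}S'$ (Remark~\ref{disc41}), in which $\dot\psi$ is flat with regular closed fibre, $R'$ is complete, and $\psi'$ is surjective. Because $\psi$ is Gorenstein, $\ker(\psi')$ is a Gorenstein ideal of $R'$, so $R'$ is Cohen-Macaulay---hence admits a canonical module $\omega_{R'}$---and one has $\omega_{S'}\cong S'\otimes_{R'}\omega_{R'}$ with $\tor^{R'}_i(S',\omega_{R'})=0$ for all $i\geq 1$ (Remark~\ref{disc41'}). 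The surjective local map $\psi'$ induces an isomorphism on residue fields, so $R'$ has residue field $\ol l$; hence $\dot\psi\colon R\to R'$ satisfies conditions~\eqref{disc11d1} and~\eqref{disc11d2}. Now view $M'$ as a finitely generated $R'$-module via $\psi'$. Since $\fm_{R'}M'=\psi'(\fm_{R'})M'=\fm_{S'}M'$, we get $\fm_{R'}^2M'=\fm_{S'}^2M'=0$, $\beta^{R'}_0(M')=\beta^{S'}_0(M')=m$, and $\beta^{R'}_0(\fm_{R'}M')=\beta^{S'}_0(\fm_{S'}M')=n$, so condition~\eqref{disc11d4} holds for $(\dot\psi,M')$.

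The one remaining point, and the only genuine computation, is condition~\eqref{disc11d3} for $(\dot\psi,M')$, namely $\tor^{R'}_i(\omega_{R'},M')=0$ for $i\geq t$. From $\tor^{R'}_i(S',\omega_{R'})=0$ for $i\geq 1$ and $\omega_{S'}\cong S'\otimes_{R'}\omega_{R'}$ one gets $\omega_{R'}\lotimes_{R'}S'\simeq\omega_{S'}$ in the derived category, and since $M'$ carries an $S'$-module structure the associativity of derived tensor products yields
$$\omega_{R'}\lotimes_{R'}M'\simeq(\omega_{R'}\lotimes_{R'}S')\lotimes_{S'}M'\simeq\omega_{S'}\lotimes_{S'}M',$$
so $\tor^{R'}_i(\omega_{R'},M')\cong\tor^{S'}_i(\omega_{S'},M')$ for every $i$; equivalently, the change-of-rings spectral sequence $\tor^{S'}_p(\tor^{R'}_q(S',\omega_{R'}),M')\Rightarrow\tor^{R'}_{p+q}(\omega_{R'},M')$ collapses to this isomorphism. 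Since $\tor^{S'}_i(\omega_{S'},M')=0$ for $i\geq t$ by Step~1, condition~\eqref{disc11d3} follows, and $(\dot\psi,M')$ witnesses the characterization. I expect the main obstacle to be the bookkeeping through the Cohen factorization---in particular, checking that $M'$, once regarded over $R'$, retains the correct zeroth Betti numbers and annihilator behaviour---together with the derived change-of-rings identification above; the remainder is a routine combination of Remarks~\ref{disc15}, \ref{disc41}, and~\ref{disc41'}.
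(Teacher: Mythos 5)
Your proposal is correct and follows essentially the same route as the paper: first use Remark~\ref{disc15} to complete $S$ and enlarge its residue field to an algebraic closure, then pass to a Cohen factorization of the resulting composite and transfer the module along the surjective piece. The only stylistic difference is that the paper verifies $\tor^{R'}_i(\omega_{R'},M')\cong\tor^{S'}_i(\omega_{S'},M')$ by tensoring a free resolution of $\omega_{R'}$ up to $S'$ (using $\tor^{R'}_i(S',\omega_{R'})=0$ for $i\geq1$), whereas you express the same fact via associativity of derived tensor products, which is an equivalent computation.
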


\begin{proof}
One implication is routine. For the converse, assume that 
$R$ is homologically of minimal multiplicity of type $(m,n,t)$.
We complete the proof in three steps.

Step 1: By definition, there is a 
local ring homomorphism 
$\vf_1\colon(R,\fm,k)\to (S_1,\fn_1,l_1)$ and a finitely generated $S_1$-module $M_1\neq 0$
such that 
\begin{enumerate}[\quad($1'$)]
\item
the ring $S_1$ has a canonical module $\omega_{S_1}$,
\item
the map $\vf_1$ is flat with Gorenstein closed fibre $S_1/\m S_1$,
\item
one has 
$\tor^{S_1}_i(\omega_{S_1},M_1)=0$ for $i\geq t$, and
\item 
one has $\n_1^2M_1=0$ and $m=\beta^{S_1}_0(M_1)$ and $n=\beta^{S_1}_0(\n_1 M_1)$.
\end{enumerate}

Step 2: 
From Remark~\ref{disc15}, there is a flat local homomorphism 
$\psi\colon(S_1,\n_1,l_1) \to (S_2,\n_1 S_2,l)$
such that $S_2$ is complete and $l$ is the algebraic closure of $l_1$.
Since the map $\psi$ is flat and the maximal ideal of $S_2$ is $\n_2=\n_1 S_2$,
it is straightforward to show that the composition 
$\vf_2\colon R\xra{\vf_1} S_1\xra\psi S_2$ and the module $M=S_2\otimes_{S_1}M_1$
satisfy the following conditions:
\begin{enumerate}[\quad($1''$)]
\item
the ring $S_2$ is complete and Cohen-Macaulay with canonical module
$\omega_{S_2}$ and has an algebraically closed residue field,
\item
the map $\vf_2$ is flat with Gorenstein closed fibre $S_2/\m S_2$,
\item
one has 
$\tor^{S_2}_i(\omega_{S_2},M)=0$ for $i\geq t$, and
\item 
one has $\n_2^2M=0$ and $m=\beta^{S_2}_0(M)$ and $n=\beta^{S_2}_0(\n_2 M)$.
\end{enumerate}

Step 3: The ring $S_2$ is complete, so the local homomorphism
$\vf_2$ admits a Cohen factorization 
$(R,\m,k)\xra{\vf}(S,\n,l)\xra{\vf_2'}(S_2,\n_2,l)$.
Remark~\ref{disc41'} implies the following:
the ideal $\ker(\vf_2')\subset S$ is Gorenstein,
the ring $S$ is complete and Cohen-Macaulay,
there is an isomorphism
$\omega_{S_2}\cong S_2\otimes_{S}\omega_{S}$,
and  $\tor^{S}_i(S_2,\omega_{S})=0$ for all $i\geq 1$.

Let $F$ be a free resolution of $\omega_{S}$ over $S$.
Then $F\otimes_{S}S_2$ is a free resolution of 
$S_2\otimes_{S}\omega_{S}\cong\omega_{S_2}$.
Hence, for each index $i$ there are isomorphisms
\begin{align}
\label{disc11d3a}
\tor_i^{S}(\omega_{S},M)
&\cong\HH_i(F\otimes_{S}M)
\cong\HH_i((F\otimes_{S}S_2)\otimes_{S_2}M)
\cong\tor^{S_2}_i(\omega_{S_2},M).
\end{align}
It follows that the map 
$\vf$ and the module $M$
satisfy the conditions~\eqref{disc11d1}--\eqref{disc11d4}.
\end{proof}

The next result is a version of Theorem~\ref{disc11d} for rings
that are strongly homologically of minimal multiplicity;
it is proved similarly.

\begin{thm}\label{disc11dx}
A local  ring  $R$ is strongly homologically of minimal multiplicity
of type $(m,n)$
if and only if 
there exists a 
local ring homomorphism 
$\vf\colon(R,\fm,k)\to (S,\fn,l)$ and a finitely generated $S$-module $M\neq 0$
such that 
\begin{enumerate}[\quad\rm(1)]
\item\label{disc11dx1}
the ring $S$ is complete and Cohen-Macaulay, and $l$ is algebraically closed,
\item\label{disc11dx2}
the map $\vf$ is flat with regular closed fibre $S/\m S$,
\item\label{disc11dx3}
one has 
$M\in\cata(S)$, and
\item \label{disc11dx4}
one has $\n^2M=0$ and $m=\beta^S_0(M)$ and $n=\beta^S_0(\n M)$.
\qed
\end{enumerate}
\end{thm}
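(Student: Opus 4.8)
The plan is to imitate the three-step proof of Theorem~\ref{disc11d}, replacing the Tor-vanishing hypothesis ``$\tor^S_i(\omega_S,M)=0$ for $i\geq t$'' by the condition ``$M\in\cata(S)$'' at every stage, and checking that membership in the Auslander class transfers along the two ring maps occurring in that proof. The forward implication is routine: given $\vf$ and $M$ as in the statement, a regular closed fibre is in particular Gorenstein, so $\vf$ and $M$ exhibit $R$ as strongly homologically of minimal multiplicity of type $(m,n)$. For the converse, assume $R$ is strongly homologically of minimal multiplicity of type $(m,n)$, witnessed by $\vf_1\colon(R,\m,k)\to(S_1,\n_1,l_1)$ and $M_1\in\cata(S_1)$ with $\n_1^2M_1=0$, $m=\beta^{S_1}_0(M_1)$, and $n=\beta^{S_1}_0(\n_1M_1)$.

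First I would run Step~2 of the proof of Theorem~\ref{disc11d} verbatim: by Remark~\ref{disc15} choose a flat local homomorphism $\psi\colon(S_1,\n_1,l_1)\to(S_2,\n_1S_2,l)$ with $S_2$ complete and $l$ the algebraic closure of $l_1$, and set $\vf_2=\psi\vf_1$ and $M=S_2\otimes_{S_1}M_1$. As there, $S_2$ is complete and Cohen-Macaulay with algebraically closed residue field, $\vf_2$ is flat with Gorenstein closed fibre, and $\n_2^2M=0$, $m=\beta^{S_2}_0(M)$, $n=\beta^{S_2}_0(\n_2M)$. The extra point is that $M\in\cata(S_2)$. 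Since the closed fibre of $\psi$ is the field $l$, the map $\psi$ is flat with Gorenstein closed fibre, so (as in the proof of Theorem~\ref{para11}) $\omega_{S_2}\cong S_2\otimes_{S_1}\omega_{S_1}$ and $\tor^{S_1}_i(\omega_{S_1},S_2)=0$ for $i\geq1$; because $\psi$ is flat, $\omega_{S_1}$ is finitely generated over the noetherian ring $S_1$, and $S_2$ is faithfully flat over $S_1$, the functor $S_2\otimes_{S_1}-$ commutes with $\tor^{S_1}_i(\omega_{S_1},-)$, $\ext^i_{S_1}(\omega_{S_1},-)$, and $\Hom_{S_1}(\omega_{S_1},-)$, and carries $\xi_{M_1}$ to $\xi_M$. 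Hence both defining conditions of $\cata(S_1)$ ascend, giving $M\in\cata(S_2)$.

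Next I would run Step~3. Since $S_2$ is complete, $\vf_2$ admits a Cohen factorization $R\xra{\vf}S\xra{\vf_2'}S_2$, and Remark~\ref{disc41'} yields: $\ker(\vf_2')$ is a Gorenstein ideal, $S$ is complete and Cohen-Macaulay, $\omega_{S_2}\cong S_2\otimes_S\omega_S$, and $\tor^S_i(S_2,\omega_S)=0$ for $i\geq1$; moreover $\vf$ is flat with regular closed fibre and $l$ is algebraically closed. View $M$ as an $S$-module through $\vf_2'$; since $\vf_2'$ is a surjective local map one has $\vf_2'(\n)=\n_2$, so $\n^2M=0$, $\beta^S_0(M)=\beta^{S_2}_0(M)=m$, and $\beta^S_0(\n M)=\beta^{S_2}_0(\n_2M)=n$. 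It remains to see $M\in\cata(S)$. If $F$ is an $S$-free resolution of $\omega_S$, then $F\otimes_SS_2$ is an $S_2$-free resolution of $\omega_{S_2}$, so exactly as in~\eqref{disc11d3a} one gets $\tor^{S_2}_i(\omega_{S_2},M)\cong\tor^S_i(\omega_S,M)$ for all $i$; Hom-tensor adjunction along $\vf_2'$ identifies $\Hom_{S_2}(F\otimes_SS_2,N)$ with $\Hom_S(F,N)$ for every $S_2$-module $N$, hence $\ext^i_{S_2}(\omega_{S_2},N)\cong\ext^i_S(\omega_S,N)$, and for $N=\omega_{S_2}\otimes_{S_2}M\cong\omega_S\otimes_SM$ the same adjunction identifies $\Hom_{S_2}(\omega_{S_2},\omega_{S_2}\otimes_{S_2}M)$ with $\Hom_S(\omega_S,\omega_S\otimes_SM)$ compatibly with $\xi_M$. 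Therefore $M\in\cata(S_2)$ forces $M\in\cata(S)$, and the pair $(\vf,M)$ satisfies~\eqref{disc11dx1}--\eqref{disc11dx4}.

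I expect the one genuine point of care to be in Step~3: one must make sure that the $S_2$-module $M$, its restriction to an $S$-module, and all the $\Hom$, $\ext$, and $\tor$ groups computed over $S$ and over $S_2$ are identified compatibly, so that the statements ``$M\in\cata(S_2)$'' and ``$M\in\cata(S)$'' literally coincide rather than merely resemble one another. These identifications use only the two facts $\omega_{S_2}\cong S_2\otimes_S\omega_S$ and $\tor^S_i(S_2,\omega_S)=0$ $(i\geq1)$ from Remark~\ref{disc41'} together with Hom-tensor adjunction, exactly as in the proof of Theorem~\ref{disc11d}; once they are in place there is nothing further to do.
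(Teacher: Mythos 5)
Your proposal is correct and fills in exactly the proof the paper intends when it says Theorem~\ref{disc11dx} ``is proved similarly'' to Theorem~\ref{disc11d}: you run the same three steps, and the extra work of verifying that membership in the Auslander class ascends along the flat local map of Step~2 (via faithfully flat base change commuting with $\tor$, $\ext$, $\Hom$, and the biduality map) and along $\vf_2'$ in Step~3 (via the isomorphisms $\omega_{S_2}\cong S_2\otimes_S\omega_S$, $\tor^S_i(S_2,\omega_S)=0$, and Hom-tensor adjunction) is precisely what is needed and is carried out correctly.
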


Readers familiar with~\cite{avramov:rhafgd} will recognize that
the proof of Theorem~\ref{thm01} only requires the homomorphism
$\vf$ to be \emph{quasi}-Gorenstein. 
(See Definition~\ref{defn51}.)
One may ask why  we require the stronger hypotheses
in Definition~\ref{defn01}.
Theorem~\ref{thm51} shows that our definition is equivalent
to the weaker definition which only requires $\vf$ to be quasi-Gorenstein.
We have chosen this one since 
flat maps with Gorenstein closed fibres are more familiar.

\begin{defn} \label{defn51}
Let 
$\vf\colon (R,\m)\to (S,\n)$ be a local ring homomorphism,
and assume that $R$ is Cohen-Macaulay. 
The homomorphism $\vf$ has 
\emph{finite G-dimension} if the $\n$-adic completion $\comp S$
is in the Auslander class $\cata(\comp R)$ of the $\m$-adic completion $\comp R$.
The homomorphism $\vf$ is \emph{quasi-Gorenstein}
if it has finite G-dimension and 
$\mu^{i+\depth(S)}_S(S)=\mu^{i+\depth(R)}_R(R)$ for all $i$.
An ideal $I\subset R$ is \emph{quasi-Gorenstein} if the natural
surjection $R\to R/I$ is quasi-Gorenstein.
\end{defn}

\begin{disc} \label{disc51}
Let 
$\vf\colon (R,\m)\to (S,\n)$ be a local ring homomorphism,
and assume that $R$ is Cohen-Macaulay. 
Let $\grave\vf\colon R\to\comp S$ denote the composition of $\vf$
with the natural map $S\to \comp S$.
Fix a Cohen factorization
$R\xra{\dot\vf} R'\xra{\vf'} \comp S$ of  $\grave\vf$.
Since $R$ is Cohen-Macaulay and $\dot\vf$ is flat with
regular closed fibre, the ring $R'$ is Cohen-Macaulay.
As $R'$ is complete, it has a canonical module $\omega_{R'}$.

The homomorphism $\vf$ has finite G-dimension if and only if
$\comp S\in\cata(R')$; see~\cite[(4.1.7) and (4.3)]{avramov:rhafgd}.
In particular, if $\vf$ has finite G-dimension, then
$\tor^{R'}_i(\comp S,\omega_{R'})=0$ for $i\geq 1$. 
Moreover, if $\vf$ is flat (or more generally, if $\vf$ has finite flat dimension)
then $\vf$ has finite G-dimension.
If $\vf$ is Gorenstein (e.g., if it is flat with Gorenstein closed fibre)
then it is quasi-Gorenstein.
The composition of two quasi-Gorenstein homomorphisms is quasi-Gorenstein
by~\cite[(8.9)]{avramov:rhafgd}.

If $\vf$ is quasi-Gorenstein, then $S$ is Cohen-Macaulay, and
the canonical module of $\comp S$ is $\omega_{\comp S}
\cong\comp S\otimes_{R'}\omega_{R'}$. Indeed, 
from~\cite[(7.8)]{avramov:rhafgd} we conclude that the complex
$\comp S\lotimes_{R'}\omega_{R'}$ is 
a dualizing complex for $\comp S$.
(See~\cite{avramov:rhafgd} for an extensive discussion on
the topic of dualizing complexes.)
The vanishing $\tor^{R'}_i(\comp S,\omega_{R'})=0$ for $i\geq 1$
implies that $\comp S\lotimes_{R'}\omega_{R'}$
is isomorphic (in the derived category $\catd(\comp S)$)
to the module $\comp S\otimes_{R'}\omega_{R'}$.
It follows that this is a canonical module for $\comp S$,
and thus $S$ is
Cohen-Macaulay.
\end{disc}

\begin{thm}\label{thm51}
A local  ring  $R$ is homologically of minimal multiplicity
of type $(m,n,t)$
if and only if 
it is Cohen-Macaulay and
there exists a 
local ring homomorphism 
$\vf\colon(R,\fm)\to (S,\fn)$ and a finitely generated $S$-module $M\neq 0$
such that 
\begin{enumerate}[\quad\rm(1)]
\item\label{thm51a}
the ring $S$ has a canonical module $\omega_{S}$,
\item
the map $\vf$ is quasi-Gorenstein,
\item\label{thm51c}
one has 
$\tor^S_i(\omega_S,M)=0$ for $i\geq t$, and
\item \label{thm51d}
one has $\n^2M=0$ and $m=\beta^S_0(M)$ and $n=\beta^S_0(\n M)$.
\end{enumerate}
\end{thm}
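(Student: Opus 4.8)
The plan is to prove the two implications separately, reducing the ``quasi-Gorenstein'' statement to the ``flat with Gorenstein closed fibre'' statement of Definition~\ref{defn01} by passing through a Cohen factorization. The forward implication is the easy one: if $R$ is homologically of minimal multiplicity of type $(m,n,t)$, then Proposition~\ref{disc11b} shows $R$ is Cohen-Macaulay, and the homomorphism $\vf$ supplied by Definition~\ref{defn01} is flat with Gorenstein closed fibre, hence quasi-Gorenstein by the remarks in Remark~\ref{disc51}. So conditions~\eqref{thm51a}--\eqref{thm51d} hold verbatim, using the same $\vf$ and $M$.

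For the converse, suppose $R$ is Cohen-Macaulay and we are given a quasi-Gorenstein $\vf\colon R\to S$, a canonical module $\omega_S$, and a finitely generated $S$-module $M\neq 0$ satisfying~\eqref{thm51c} and~\eqref{thm51d}. First I would apply Lemma~\ref{lem41}\eqref{lem41a} to replace $R$ by its $\m$-adic completion, so that we may assume $R$ is complete; one must check that the hypotheses transfer, but completion is flat and preserves the Tor-vanishing and the length data on $M$ (viewed over $\comp S$, using $M\cong\comp M^{\n}$ since $M$ has finite length), and quasi-Gorenstein-ness passes to completions essentially by definition. Next, following the pattern of the proof of Theorem~\ref{disc11d}, I would also pass to $\comp S$: replacing $S$ by $\comp S$ and $M$ by $M$ itself (again finite length, so unchanged), and note $\omega_{\comp S}\cong\comp S\otimes_S\omega_S$ with the relevant Tor vanishing, so that~\eqref{thm51c} is preserved. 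Thus we may assume both $R$ and $S$ are complete.

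Now fix a Cohen factorization $R\xra{\dot\vf}R'\xra{\vf'}S$ of $\vf$. By Remark~\ref{disc51}, since $\vf$ is quasi-Gorenstein we have $S\in\cata(R')$, hence in particular $\pd$-free information: $R'$ is Cohen-Macaulay, $\omega_S\cong S\otimes_{R'}\omega_{R'}$, and $\tor^{R'}_i(S,\omega_{R'})=0$ for all $i\geq 1$. The key computation is then exactly as in~\eqref{disc11d3a}: taking a free resolution $F$ of $\omega_{R'}$ over $R'$, the complex $F\otimes_{R'}S$ is a free resolution of $\omega_S$ over $S$, whence for every $i$
\[
\tor^S_i(\omega_S,M)\cong\HH_i((F\otimes_{R'}S)\otimes_S M)\cong\HH_i(F\otimes_{R'}M)\cong\tor^{R'}_i(\omega_{R'},M),
\]
where $M$ is regarded as an $R'$-module via $\vf'$. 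So the Tor-vanishing in~\eqref{thm51c} passes from $S$ to $R'$, and the length data in~\eqref{thm51d} is unaffected since $M$ has the same minimal generators and the same $\n$-torsion whether viewed over $S$ or over $R'$ (note $\ker(\vf')$ annihilates $M$, and $\n_{R'}M = \ker(\vf')M + \text{(preimage of }\n M) $ has the same minimal number of generators as $\n M$ because $\vf'$ is surjective with $\ker(\vf')M\subseteq\n_{R'}^2 M$, which one checks from $\n^2 M=0$). This exhibits $R'$ as a complete Cohen-Macaulay ring receiving a flat local homomorphism $\dot\vf\colon R\to R'$ with \emph{regular} closed fibre (in particular Gorenstein), equipped with $\omega_{R'}$ and the $R'$-module $M$ verifying conditions~\eqref{defn01a}--\eqref{defn01d} of Definition~\ref{defn01}. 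Hence $R$ is homologically of minimal multiplicity of type $(m,n,t)$.

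The main obstacle I anticipate is the bookkeeping around transporting condition~\eqref{thm51d} along $\vf'$ — specifically verifying that $\beta^{R'}_0(\n_{R'}M)=\beta^S_0(\n M)=n$ and $\beta^{R'}_0(M)=\beta^S_0(M)=m$. This requires knowing that the surjection $\vf'$ does not create extra minimal generators of $M$ or of $\n_{R'}M$; the point is that $\ker(\vf')\cdot M = 0$ (since $M$ is an $S$-module), so $M$ and $\n_{R'}M$ really are the same abelian groups as $M$ and $\n M$ with module structure restricted along a surjection, and minimal numbers of generators are insensitive to this restriction. A careful statement of this — perhaps isolated as a small lemma about restriction of scalars along surjections with the ideal acting as zero — is the one genuinely delicate verification; everything else is the now-standard Cohen-factorization machinery used in the proof of Theorem~\ref{disc11d}.
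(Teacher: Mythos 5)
Your proposal is correct and follows essentially the same route as the paper's proof: treat the forward implication as routine (flat with Gorenstein closed fibre is quasi-Gorenstein, and Proposition~\ref{disc11b} gives Cohen-Macaulayness), then for the converse complete $S$, take a Cohen factorization $R\xra{\dot\vf}R'\xra{\vf'}\comp S$, invoke Remark~\ref{disc51} for $\omega_{\comp S}\cong\comp S\otimes_{R'}\omega_{R'}$ and the associated Tor-vanishing, and transport conditions~\eqref{thm51c}--\eqref{thm51d} back to $R'$ by the computation~\eqref{disc11d3a}. One cosmetic remark: the initial pass to $\comp R$ is superfluous, since a Cohen factorization only requires the \emph{target} to be complete (Remark~\ref{disc41}), and the paper correctly skips it; your care about transferring $\beta_0^{R'}(M)=\beta_0^S(M)$ and $\beta_0^{R'}(\n_{R'}M)=\beta_0^S(\n M)$ across the surjection $\vf'$ is sound, though simpler than you make it — since $\ker(\vf')M=0$, the $R'$-action on $M$ literally factors through $S$, so $\m' M=\n M$ and all the relevant length/Betti data coincide without any appeal to $\ker(\vf')M\subseteq\m'^2M$.
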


\begin{proof}
One implication is routine,
using the fact that a local homomorphism that is flat with Gorenstein
closed fibre is quasi-Gorenstein. For the converse, assume that 
$R$ 
is Cohen-Macaulay and
there exists a 
local ring homomorphism 
$\vf\colon R\to S$ and a finitely generated $S$-module $M\neq 0$
satisfying conditions~\eqref{thm51a}--\eqref{thm51d}.
By passing to the completion $\comp S$, we assume that $S$ is complete.

Fix a Cohen factorization
$R\xra{\dot\vf} R'\xra{\vf'} S$ of $\vf$.
Remark~\ref{disc51} implies that $R'$ is Cohen-Macaulay
with canonical module $\omega_{R'}$,
that $\tor^{R'}_i(\comp S,\omega_{R'})=0$ for $i\geq 1$, and 
that the canonical module of $\comp S$ is $\omega_{\comp S}
\cong\comp S\otimes_{R'}\omega_{R'}$.
The argument of Theorem~\ref{disc11d} now shows that
the homomorphism $\dot\vf$ and the $R'$-module $M$
satisfy the hypotheses of Definition~\ref{defn01},
 so $R$ is homologically of minimal multiplicity
 of type $(m,n,t)$.
\end{proof}

The next result is a version of Theorem~\ref{thm51} for rings
that are strongly homologically of minimal multiplicity;
it is proved similarly.

\begin{thm}\label{thm51x}
A local  ring  $R$ is homologically of minimal multiplicity
of type $(m,n)$
if and only if 
it is Cohen-Macaulay and
there exists a 
local ring homomorphism 
$\vf\colon(R,\fm)\to (S,\fn)$ and a finitely generated $S$-module $M\neq 0$
such that 
\begin{enumerate}[\quad\rm(1)]
\item\label{thm51xa}
the ring $S$ has a canonical module $\omega_{S}$,
\item
the map $\vf$ is quasi-Gorenstein,
\item\label{thm51xc}
one has 
$M\in\cata(S)$, and
\item \label{thm51xd}
one has $\n^2M=0$ and $m=\beta^S_0(M)$ and $n=\beta^S_0(\n M)$.
\end{enumerate}
\end{thm}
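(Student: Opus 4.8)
The plan is to follow the proof of Theorem~\ref{thm51} essentially verbatim, carrying along the Auslander-class hypothesis in place of the bare Tor-vanishing hypothesis. One implication is routine: if $R$ is strongly homologically of minimal multiplicity of type $(m,n)$, then the witnessing homomorphism $\vf\colon R\to S$ and module $M$ from Definition~\ref{defn01} already satisfy \eqref{thm51xa}, \eqref{thm51xc} and \eqref{thm51xd}; the map $\vf$ is quasi-Gorenstein because it is flat with Gorenstein closed fibre (Remark~\ref{disc51}); and $R$ is Cohen-Macaulay by Proposition~\ref{disc11b}.

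For the converse, assume $R$ is Cohen-Macaulay and that there are $\vf\colon R\to S$ and $M\neq 0$ satisfying \eqref{thm51xa}--\eqref{thm51xd}. First I would observe that $M$ has finite length: the conditions $\n^2M=0$, $\beta^S_0(M)=m$ and $\beta^S_0(\n M)=n$ produce an exact sequence $0\to l^n\to M\to l^m\to 0$, so $\len_S(M)=m+n$. Hence $\comp S\otimes_SM\cong M$, and since $\omega_{\comp S}\cong\comp S\otimes_S\omega_S$, flat base change along $S\to\comp S$ shows that $M\in\cata(\comp S)$ while preserving the numerical data $\n^2M=0$, $\beta^{\comp S}_0(M)=m$ and $\beta^{\comp S}_0(\n\comp S\cdot M)=n$. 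So we may assume $S$ is complete. Choose a Cohen factorization $R\xra{\dot\vf}R'\xra{\vf'}S$ of $\vf$, and write $\fm'$ for the maximal ideal of $R'$ (so $\vf'(\fm')=\n$). Since $\vf$ is quasi-Gorenstein, Remark~\ref{disc51} gives that $R'$ is Cohen-Macaulay with canonical module $\omega_{R'}$, that $\tor^{R'}_i(S,\omega_{R'})=0$ for all $i\geq 1$, and that $\omega_S\cong S\otimes_{R'}\omega_{R'}$.

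The one genuinely new point is to push the Auslander-class membership of $M$ down along the surjection $\vf'$, i.e.\ to prove that $M$, viewed as an $R'$-module via $\vf'$, lies in $\cata(R')$. Let $P_\bullet\to\omega_{R'}$ be a free resolution over $R'$. The vanishing $\tor^{R'}_i(\omega_{R'},S)=0$ for $i\geq 1$ makes $P_\bullet\otimes_{R'}S\to S\otimes_{R'}\omega_{R'}\cong\omega_S$ a free resolution over $S$; and since $\ker(\vf')M=0$ we also have $\omega_{R'}\otimes_{R'}M\cong(\omega_{R'}\otimes_{R'}S)\otimes_SM\cong\omega_S\otimes_SM$. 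Applying $-\otimes_{R'}M$ and $\Hom_{R'}(-,\omega_S\otimes_SM)$ to $P_\bullet$ and using the standard adjointness isomorphisms, one gets
\begin{align*}
\tor^{R'}_i(\omega_{R'},M)&\cong\tor^S_i(\omega_S,M),\\
\ext^i_{R'}(\omega_{R'},\omega_{R'}\otimes_{R'}M)&\cong\ext^i_S(\omega_S,\omega_S\otimes_SM)
\end{align*}
for all $i$, together with an identification of the map $\xi^{R'}_M$ with $\xi^S_M$. As $M\in\cata(S)$, this shows $M\in\cata(R')$. Finally, because $\vf'$ is a surjective local homomorphism one has $\fm'M=\n M$ and $\fm'^2M=\n^2M=0$, so $\beta^{R'}_0(M)=m$ and $\beta^{R'}_0(\fm'M)=n$; and $\dot\vf$ is flat with regular (hence Gorenstein) closed fibre with $R'$ admitting a canonical module. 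Thus $\dot\vf$ together with $M$ witnesses that $R$ is strongly homologically of minimal multiplicity of type $(m,n)$, as desired. I expect the change-of-rings identifications for $\tor$, $\ext$ and $\xi_M$ in the last paragraph to be the only step requiring genuine care; everything else is a direct transcription of the proofs of Theorems~\ref{disc11d} and~\ref{thm51}.
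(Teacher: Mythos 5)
Your proposal is correct and follows the same route as the paper, which simply states that the proof is ``similar'' to that of Theorem~\ref{thm51} without giving details. The added content you supply --- the change-of-rings identifications for $\tor$, $\ext$, and $\xi_M$ that transport Auslander-class membership from $S$ to $R'$ across the surjection $\vf'$ in the Cohen factorization (and the preliminary flat base change that lets one assume $S$ complete while preserving $M\in\cata(S)$) --- is exactly the detail the paper leaves implicit, and your treatment of it is sound.
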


The final results of this section explain why we do not single out rings that satisfy the 
conditions that are dual to
``(strongly) homologically of minimal multiplicity''.

\begin{prop}\label{disc11c}
A local  ring  $R$ is homologically of minimal multiplicity
if and only if there exists a 
local ring homomorphism 
$\vf\colon(R,\fm,k)\to (S,\fn,l)$ and a finitely generated $S$-module $N\neq 0$
such that 
\begin{enumerate}[\rm\quad(1)]
\item
the ring $S$ has a canonical module $\omega_{S}$,
\item
the map $\vf$ is flat with Gorenstein closed fibre $S/\m S$,
\item 
one has $\ext_S^i(\omega_S,N)=0$ for $i\geq t$, and
\item
one has $\n^2N=0$.
\end{enumerate}
\end{prop}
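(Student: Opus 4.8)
The plan is to derive this from Definition~\ref{defn01} by Matlis duality, invoking exactly the facts assembled in Remark~\ref{disc13}: the homomorphism $\vf$, the ring $S$, and its canonical module $\omega_S$ will be kept fixed on both sides of the equivalence---so conditions~(1) and~(2) here are literally conditions~\eqref{defn01a} and~\eqref{defn01b} of Definition~\ref{defn01}---and only the module is dualized. First I would record the relevant standard facts: a finitely generated $S$-module killed by $\n^2$ has finite length; for a finite length module $Z$ one has biduality $Z^{\vee\vee}\cong Z$, the equivalence $Z=0\iff Z^{\vee}=0$, and $\n^2Z=0\iff\n^2Z^{\vee}=0$; and, for finite length $Z$ and arbitrary $Y$, one has $\Tor_i^S(Y,Z^{\vee})=0$ if and only if $\ext_S^i(Y,Z)=0$ for every $i$. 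None of this requires $S$ to be complete, so no completion step is needed (in contrast with the proof of Theorem~\ref{disc11d}).

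For the forward implication, I would assume $R$ is homologically of minimal multiplicity of type $(m,n,t)$, fix $\vf\colon(R,\fm,k)\to(S,\fn,l)$ and a nonzero finitely generated $S$-module $M$ as in Definition~\ref{defn01}, and set $N=M^{\vee}$. Since $\n^2M=0$, the module $M$ has finite length, hence so does $N$, and one gets $N\neq 0$ with $\n^2N=0$, which is condition~(4). Applying the Tor--Ext equivalence above with $Y=\omega_S$ and $Z=M^{\vee}$, and using $(M^{\vee})^{\vee}\cong M$, one sees that $\ext_S^i(\omega_S,N)=0$ exactly when $\Tor_i^S(\omega_S,M)=0$; the latter holds for $i\geq t$ by~\eqref{defn01c}, so condition~(3) holds with the same $t$, and $\vf$ together with $N$ witnesses the right-hand side.

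For the converse, I would start from $\vf\colon(R,\fm,k)\to(S,\fn,l)$ and a nonzero finitely generated $S$-module $N$ satisfying~(1)--(4) (so $N$ has finite length), and set $M=N^{\vee}$, a nonzero finite length $S$-module with $\n^2M=0$. Applying the Tor--Ext equivalence with $Y=\omega_S$ and $Z=N$ gives $\Tor_i^S(\omega_S,M)=0$ whenever $\ext_S^i(\omega_S,N)=0$, hence for $i\geq t$. Putting $m=\beta^S_0(M)\geq 1$ and $n=\beta^S_0(\n M)\geq 0$, all of conditions~\eqref{defn01a}--\eqref{defn01d} then hold for $\vf$ and $M$, so $R$ is homologically of minimal multiplicity of type $(m,n,t)$. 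There is no serious obstacle here; the one point to watch is placing the finite length module in the correct argument of the Tor--Ext equivalence so that the Ext- and Tor-vanishing ranges match exactly, and noting that the whole argument is valid over $S$ without any completeness hypothesis.
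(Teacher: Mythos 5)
Your proof is correct and follows essentially the same route as the paper: both reduce the statement to the Matlis-duality facts already recorded in Remark~\ref{disc13}, sending $M\leftrightarrow N=M^{\vee}$ and using that, for finite-length modules, duality preserves $\fn^2$-annihilation and exchanges $\Tor_i^S(\omega_S,-)$-vanishing with $\Ext_S^i(\omega_S,-)$-vanishing. The paper's proof is just more terse, citing Remark~\ref{disc13} and saying ``the result now follows''; your observation that no completion of $S$ is needed is accurate and a reasonable thing to make explicit.
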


\begin{proof}
By Remark~\ref{disc13} 
an $S$-module $N$ satisfies
$\n^2N=0$ if and only if $\n^2N^\vee=0$,
and $\ext_S^i(\omega_S,N)=0$ if and only if $\tor^S_i(\omega_S,N^\vee)=0$.
The result now follows.
\end{proof}

\begin{prop}\label{disc11cx}
A local  ring  $R$ is homologically of minimal multiplicity
if and only if there exists a 
local ring homomorphism 
$\vf\colon(R,\fm,k)\to (S,\fn,l)$ and a finitely generated $S$-module $N\neq 0$
such that 
\begin{enumerate}[\rm\quad(1)]
\item
the ring $S$ has a canonical module $\omega_{S}$,
\item
the map $\vf$ is flat with Gorenstein closed fibre $S/\m S$, and
\item
one has
$N\in\catb(S)$, and
\item
one has $\n^2N=0$.
\end{enumerate}
\end{prop}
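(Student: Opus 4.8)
The plan is to carry over the Matlis-duality argument used for Proposition~\ref{disc11c}, with the Bass/Auslander-class correspondence of Remark~\ref{disc13} in place of the Ext/Tor correspondence. The preliminary observation is that the hypothesis $\n^2N=0$ makes $N$ a finitely generated module over $S/\n^2$, which is artinian since $\n$ is its only prime; hence $N$ has finite length and Matlis duality $(-)^\vee=\Hom_S(-,E_S(l))$ behaves well on it: $N^\vee$ is finitely generated of finite length, biduality $N\to N^{\vee\vee}$ is an isomorphism, $N^\vee\neq 0$ iff $N\neq 0$, and $\n^2N=0$ iff $\n^2N^\vee=0$. Together with the last assertion of Remark~\ref{disc13} and biduality, this gives $N\in\catb(S)$ iff $N^\vee\in\cata(S)$.

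The ``if'' direction is then immediate. Given $\vf$ and a module $N$ satisfying~(1)--(4), put $M=N^\vee$: this is a nonzero finitely generated $S$-module with $\n^2M=0$ and $M\in\cata(S)$, while $S$ still has a canonical module and $\vf$ is still flat with Gorenstein closed fibre. Setting $m=\beta^S_0(M)\geq 1$ and $n=\beta^S_0(\n M)\geq 0$, the pair $(\vf,M)$ verifies conditions~\eqref{defn01a}, \eqref{defn01b}, \eqref{defn01'c} and~\eqref{defn01d} of Definition~\ref{defn01}, so $R$ is strongly homologically of minimal multiplicity of type $(m,n)$, and hence homologically of minimal multiplicity of type $(m,n,1)$ by Remark~\ref{disc11}.

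For the ``only if'' direction one runs the argument in reverse, starting from a pair witnessing that $R$ is homologically of minimal multiplicity and trying to produce $N$. When $R$ is Gorenstein this is trivial: take $S=R$ and $\vf=\mathrm{id}$, so $\omega_S\cong S$, $\catb(S)=\Mod S$, and $N=l$ works. The non-Gorenstein case is the main obstacle. There I would first apply Theorem~\ref{disc11d} to arrange that $S$ is complete and Cohen-Macaulay with canonical module, $l$ is algebraically closed, $\vf$ is flat with regular closed fibre, and there is a nonzero $M$ with $\n^2M=0$, $\tor^S_i(\omega_S,M)=0$ for $i\geq t$, and $m\mid n$ (Theorem~\ref{para11}\eqref{para11e}); the delicate point is to upgrade such an $M$ into a module lying in $\catb(S)$ and killed by $\n^2$, equivalently --- by the duality above --- into a module in $\cata(S)$ killed by $\n^2$. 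My approach would be to analyze the Foxby-type modules $\omega_S\otimes_S M$ and $\Hom_S(\omega_S,\omega_S\otimes_S M)$ together with the Betti-number recursion of Section~\ref{sec1}, parlaying the eventual vanishing of $\tor^S_i(\omega_S,M)$ into the full vanishing and isomorphism conditions that define $\catb(S)$; making that passage precise is the crux of the proof.
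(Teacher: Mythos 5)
Your Matlis-duality approach is exactly the paper's intended route: the paper's entire proof of Proposition~\ref{disc11cx} reads ``The proof is similar to that of Proposition~\ref{disc11c},'' and that earlier proof is precisely the $(-)^\vee$ translation you set up via Remark~\ref{disc13}. Your treatment of the ``if'' direction is correct and complete, and it agrees with what the paper does.

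The obstacle you hit in the ``only if'' direction is real, but it signals a typo in the statement rather than a missing step you should try to supply. Under Matlis duality for finite-length modules, conditions (1)--(4) of Proposition~\ref{disc11cx} (a nonzero finitely generated $N$ with $N\in\catb(S)$ and $\n^2N=0$) correspond bijectively to the conditions defining \emph{strongly} homologically of minimal multiplicity (Definition~\ref{defn01} with~\eqref{defn01'c}: a nonzero finitely generated $M\in\cata(S)$ with $\n^2M=0$), and not merely the Tor-vanishing conditions~\eqref{defn01c}. Indeed, if Proposition~\ref{disc11cx} were true as written, the forward implication --- combined with your ``if'' direction --- would resolve Question~\ref{qblah} (``must homologically of minimal multiplicity imply strongly?'') in the affirmative, yet the authors explicitly leave that question open in Remark~\ref{d67}. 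So the word ``strongly'' has simply been dropped from the statement; it should read ``strongly homologically of minimal multiplicity,'' matching the pattern of Theorem~\ref{disc11dx} versus Theorem~\ref{disc11d}. With that correction your ``only if'' direction is immediate by the same duality: take $\vf$ and $M\in\cata(S)$ with $\n^2M=0$ from Definition~\ref{defn01}, set $N=M^\vee$, and Remark~\ref{disc13} together with $\len_S(M)<\infty$ gives $N$ nonzero, finitely generated, $\n^2N=0$, and $N\in\catb(S)$. The passage you flagged as the crux --- upgrading a module with eventual Tor-vanishing into the Bass class --- is not part of the argument and cannot be made with the paper's tools.
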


\begin{proof}
The proof is similar to that of Proposition~\ref{disc11c}.
\end{proof}

\section{Ascent and Descent Behavior} \label{sec9}

This section culminates in
Corollaries~\ref{prop41}
and~\ref{prop41x} which describe ascent and descent behavior for our classes of rings
along local quasi-Gorenstein ring homomorphisms. We divide the proofs into several pieces.

\begin{lem}\label{lem42}
Let $I\subset R$ be a quasi-Gorenstein ideal. 
If the quotient $R/I$
is homologically of minimal multiplicity of type $(m,n,t)$,
then $R$ is homologically of minimal multiplicity of type $(m,n,t)$.
The converse holds when $I\subseteq \m^2$.
\end{lem}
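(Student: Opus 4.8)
The strategy is to take the data witnessing homological minimal multiplicity on one side and transport it across the quasi-Gorenstein surjection $\pi\colon R \to R/I$, using composition of quasi-Gorenstein maps together with the characterization in Theorem~\ref{thm51}. For the forward direction: suppose $R/I$ is homologically of minimal multiplicity of type $(m,n,t)$, so by Theorem~\ref{thm51} there is a quasi-Gorenstein homomorphism $\vf\colon R/I \to S$, a canonical module $\omega_S$, and an $S$-module $M\neq 0$ with $\n^2 M = 0$, $m = \beta^S_0(M)$, $n = \beta^S_0(\n M)$, and $\tor^S_i(\omega_S, M) = 0$ for $i \geq t$. Since $I$ is a quasi-Gorenstein ideal, $\pi$ is quasi-Gorenstein, and by the composition property of quasi-Gorenstein homomorphisms quoted in Remark~\ref{disc51} (from~\cite[(8.9)]{avramov:rhafgd}), the composite $\vf \circ \pi\colon R \to S$ is quasi-Gorenstein. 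One also needs $R$ to be Cohen-Macaulay: this follows because $I$ quasi-Gorenstein forces $R/I$ Cohen-Macaulay to lift to $R$ Cohen-Macaulay (again via Remark~\ref{disc51}, since a quasi-Gorenstein map has finite G-dimension and the relevant $\tor$-vanishing over the Cohen factorization forces the source to be Cohen-Macaulay). Then $\vf\circ\pi$, $S$, $\omega_S$, and $M$ satisfy conditions~(1)--(4) of Theorem~\ref{thm51} verbatim — the canonical module, the Tor-vanishing, and the length/Betti conditions on $M$ are all intrinsic to $S$ and $M$ and are unchanged — so $R$ is homologically of minimal multiplicity of type $(m,n,t)$.

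For the converse, assume $R$ is homologically of minimal multiplicity of type $(m,n,t)$ and $I \subseteq \m^2$. By Theorem~\ref{thm51} fix a quasi-Gorenstein $\vf\colon R \to S$, a canonical module $\omega_S$, and $M \neq 0$ over $S$ with the usual vanishing and numerical conditions. The idea is to push out along $\pi$: set $\bar S = S/IS$ and $\bar M = M/IM = (R/I)\otimes_R M$. The hypothesis $I \subseteq \m^2$ is exactly what is needed so that the maximal ideal and its square behave correctly after killing $IS$ — concretely, one must check $IS \subseteq \mathfrak{n}^2$ (which follows since $\vf$ local and $I \subseteq \m^2$ gives $IS \subseteq \m^2 S \subseteq \mathfrak{n}^2$), so that $\bar{\mathfrak{n}}^2 \bar M = 0$ still holds, and that the minimal generator counts $\beta_0(\bar M)$ and $\beta_0(\bar{\mathfrak{n}}\bar M)$ are preserved (the relevant exact sequence $0 \to l^n \to M \to l^m \to 0$ of Theorem~\ref{thm11}, with $l$ the residue field, is already killed by $\mathfrak{n}$, hence by $IS$, so it survives base change to $\bar S$ unchanged). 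One then needs the induced map $\bar\vf\colon R/I \to \bar S$ to be quasi-Gorenstein and $\bar S$ to have a canonical module: this is the standard base-change behavior of quasi-Gorenstein homomorphisms along the surjection $R \to R/I$, for which one invokes the composition/cancellation results of~\cite{avramov:rhafgd} as summarized in Remark~\ref{disc51}, together with the fact that $\omega_{\bar S} \cong (R/I)\otimes_R \omega_S$ and $\tor^{\text{(intermediate)}}_i$ vanishes appropriately. Finally one must verify $\tor^{\bar S}_i(\omega_{\bar S}, \bar M) = 0$ for $i \geq t$; the natural approach is a spectral sequence or change-of-rings argument relating $\tor^{\bar S}(\omega_{\bar S}, \bar M)$ to $\tor^S(\omega_S, M)$ via the (finite) resolution of $R/I$ over $R$, pushed through $\vf$.

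The main obstacle is the converse direction, specifically the verification that $\tor^{\bar S}_i(\omega_{\bar S}, \bar M) = 0$ for $i \geq t$ and that $\bar\vf$ genuinely inherits quasi-Gorensteinness with $\omega_{\bar S}$ as claimed — this is where the hypothesis $I \subseteq \m^2$ and the finite-G-dimension machinery of~\cite{avramov:rhafgd} must be used carefully, rather than formally. The forward direction is essentially bookkeeping with the composition of quasi-Gorenstein maps, so I would write that first and quickly, then spend the bulk of the argument on the base-change step, very likely reducing to the complete case first (as in the proof of Theorem~\ref{disc11d}) so that Cohen factorizations and canonical modules are available throughout.
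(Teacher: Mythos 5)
Your forward direction is correct. You compose the quasi-Gorenstein surjection $R\to R/I$ with the quasi-Gorenstein $\vf\colon R/I\to S$, cite \cite[(8.9)]{avramov:rhafgd} for the composite being quasi-Gorenstein, and invoke Theorem~\ref{thm51}; the paper instead takes a Cohen factorization of $\vf\circ\tau$ so as to land back in the flat-with-regular-fibre setting of Definition~\ref{defn01}. The two routes are morally the same and yours is slightly shorter, since Theorem~\ref{thm51} lets you stop at quasi-Gorenstein.

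Your converse, however, has a genuine gap, and it is exactly where you flag your own uncertainty. By starting from Theorem~\ref{thm51}'s merely quasi-Gorenstein $\vf\colon R\to S$, you cannot readily show that $\overline{\vf}\colon R/I\to S/IS$ is quasi-Gorenstein: you would first need $IS$ to be a quasi-Gorenstein ideal of $S$, and there is no base-change result that gives this along a non-flat quasi-Gorenstein map. The paper avoids the problem by choosing $\vf$ directly from Definition~\ref{defn01}, i.e.\ \emph{flat} with Gorenstein closed fibre. Then $IS$ is the extension of the quasi-Gorenstein ideal $I$ along the flat $\vf$, hence quasi-Gorenstein by \cite[(8.6)]{avramov:rhafgd}; the induced $\overline{\vf}\colon R/I\to S/IS$ is visibly flat with closed fibre $S/\m S$, which is Gorenstein, so Definition~\ref{defn01}(2) is immediate and no finite-G-dimension machinery is required; the canonical module is $\omega_{S/IS}\cong (S/IS)\otimes_S\omega_S$ with $\tor^S_i(S/IS,\omega_S)=0$ for $i\geq 1$ (note this is \emph{not} $(R/I)\otimes_R\omega_S$ as you wrote -- the two coincide only because $\vf$ is flat); and the final Tor-vanishing is a one-line free-resolution argument rather than a spectral sequence: if $F$ is an $S$-free resolution of $\omega_S$, the vanishing just noted makes $F\otimes_S(S/IS)$ an $(S/IS)$-free resolution of $\omega_{S/IS}$, and since your observation $IS\subseteq\n^2$ makes $M$ an $S/IS$-module already, one gets $\tor^{S/IS}_i(\omega_{S/IS},M)\cong\HH_i(F\otimes_S M)\cong\tor^S_i(\omega_S,M)=0$ for $i\geq t$, exactly as in the isomorphism~\eqref{disc11d3a} from the proof of Theorem~\ref{disc11d}.
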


\begin{proof}
Let $\tau\colon R\to R/I$ 
denote the canonical surjection.

Assume first that 
the quotient $R/I$
is homologically of minimal multiplicity of type $(m,n,t)$.
Let $\vf_1\colon (R/I,\m/I)\to (S_1,\n_1)$ be a  ring homomorphism and $M_1$ an $S_1$-module
as in Theorem~\ref{disc11d}. 
Since $S_1$ is complete, Remark~\ref{disc41} implies that the composition
$\vf_1\tau\colon R\to S_1$ has a Cohen factorization
$R\xra{\dot\vf_1}R'\xra{\vf_1'}S_1$.
Since $\tau$ and $\vf_1$ are quasi-Gorenstein,
Remark~\ref{disc51} implies that the composition $\vf_1'\dot\vf_1=\vf_1\tau$
is quasi-Gorenstein. 
Hence, we have
$\omega_{S_1}\cong S_1\otimes_{R'}\omega_{R'}$
and $\tor^{R'}_i(S_1,\omega_{R'})=0$ for all $i\geq 1$.
The isomorphisms~\eqref{disc11d3a} from the proof of Theorem~\ref{disc11d} shows that 
\begin{align*}
\tor_i^{R'}(\omega_{R'},M_1)
\cong\tor^{S_1}_i(\omega_{S_1},M_1)=0
\end{align*}
for all $i\geq t$.
Let $\m'$ be the maximal ideal of $R'$.
Since $(\m')^2M_1=\n_1^2M_1=0$, the homomorphism
$\dot\vf_1\colon R\to R'$ and the $R'$-module $M_1$
combine to show that $R$ is homologically of minimal multiplicity
of type $(m,n,t)$.

For the converse, assume that $I\subseteq\m^2$ and that
$R$ is homologically of minimal multiplicity of type $(m,n,t)$.
Let $\vf_2\colon (R,\m)\to (S_2,\n_2)$ be a ring homomorphism and $M_2$ an $S_2$-module
as in Definition~\ref{defn01}. Since $\vf_2$ is flat,
it is straightforward to show that the ideal
$IS_2\subseteq S_2$ is quasi-Gorenstein 
(see, e.g., \cite[(8.6)]{avramov:rhafgd})
and the induced homomorphism
$\ol{\vf_2}\colon R/I\to S_2/IS_2$ is flat. The closed fibre
the composition
$\ol{\vf_2}\tau=\pi_2\vf_2\colon R\to S_2/IS_2$ is 
the ring $(S_2/IS_2)\otimes_{R}(R/\m)\cong S_2/\m S_2$ 
which is Gorenstein.
The assumptions $I\subseteq\m^2$ and $\n_2^2M_2=0$ imply that
$IS_2M_2=0$, so $M_2$ is naturally an $S_2/IS_2$-module. 
As in the previous paragraph, we have
$\omega_{S_2/IS_2}\cong S_2/IS_2\otimes_{S_2}\omega_{S_2}$
and $\tor^{S_2}_i(S_2/IS_2,\omega_{S_2})=0$ for all $i\geq 1$,
and 
$$\tor^{S_1}_i(\omega_{S_1},M_2)\cong\tor_i^{S_2/IS_2}(\omega_{S_2/IS_2},M_2)=0$$
for all $i\geq t$.
Since $(\n_2/IS_2)^2M_2=\n_2^2M_2=0$, the homomorphism
$\ol{\vf_2}\colon R/I\to S_2/IS_2$ and the $S_2/IS_2$-module $M_2$
combine to show that $R/I$ is homologically of minimal multiplicity.
\end{proof}

\begin{lem}\label{lem42x}
Let $I\subset R$ be a quasi-Gorenstein ideal. 
If the quotient $R/I$
is strongly homologically of minimal multiplicity of type $(m,n)$,
then $R$ is strongly homologically of minimal multiplicity of type $(m,n)$.
The converse holds when $I\subseteq \m^2$.
\end{lem}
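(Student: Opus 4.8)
The plan is to mimic the proof of Lemma~\ref{lem42} almost verbatim, replacing every Tor-vanishing statement about $\tor^S_i(\omega_S,M)$ with the membership $M\in\cata(S)$, and replacing appeals to Theorem~\ref{disc11d} with appeals to Theorem~\ref{disc11dx}. The key point that makes this work is that the Auslander class behaves well under the base changes that appear in the argument, just as Tor-vanishing does.

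For the forward direction, suppose $R/I$ is strongly homologically of minimal multiplicity of type $(m,n)$. Apply Theorem~\ref{disc11dx} to get a flat local homomorphism $\vf_1\colon R/I\to S_1$ with regular closed fibre, $S_1$ complete, and an $S_1$-module $M_1\in\cata(S_1)$ with $\n_1^2M_1=0$, $m=\beta^{S_1}_0(M_1)$, $n=\beta^{S_1}_0(\n_1M_1)$. First I would form the Cohen factorization $R\xra{\dot\vf_1}R'\xra{\vf_1'}S_1$ of the (complete-target) composition $\vf_1\tau$, and observe via Remark~\ref{disc51} that $\vf_1\tau$ is quasi-Gorenstein, so that $\omega_{S_1}\cong S_1\otimes_{R'}\omega_{R'}$ with $\tor^{R'}_i(S_1,\omega_{R'})=0$ for $i\geq 1$. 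The crucial step is then to upgrade the isomorphism~\eqref{disc11d3a} to the statement $M_1\in\cata(R')$: since $\dot\vf_1$ is flat and $S_1$ is flat over $R'$ through the finite-projective-dimension-type data recorded in Remark~\ref{disc51}, the functors $\tor^{R'}_i(\omega_{R'},-)$ and $\ext^i_{R'}(\omega_{R'},\omega_{R'}\otimes_{R'}-)$ computed on $M_1$ agree with the corresponding $S_1$-functors, and the natural map $\xi_{M_1}$ over $R'$ is identified with $\xi_{M_1}$ over $S_1$ after base change; one concludes $M_1\in\cata(R')$ from $M_1\in\cata(S_1)$. With $\m'$ the maximal ideal of $R'$ we have $(\m')^2M_1=\n_1^2M_1=0$, so $\dot\vf_1$ and $M_1$ witness that $R$ is strongly homologically of minimal multiplicity of type $(m,n)$ via Theorem~\ref{disc11dx} (or Definition~\ref{defn01}).

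For the converse, assume $I\subseteq\m^2$ and $R$ is strongly homologically of minimal multiplicity of type $(m,n)$; take $\vf_2\colon R\to S_2$ flat with Gorenstein closed fibre and $M_2\in\cata(S_2)$ as in Definition~\ref{defn01}. As in Lemma~\ref{lem42}, the ideal $IS_2$ is quasi-Gorenstein (by \cite[(8.6)]{avramov:rhafgd}) and $\ol{\vf_2}\colon R/I\to S_2/IS_2$ is flat with Gorenstein closed fibre $S_2/\m S_2$; since $I\subseteq\m^2$ and $\n_2^2M_2=0$ we get $IS_2M_2=0$, so $M_2$ is an $S_2/IS_2$-module with $(\n_2/IS_2)^2M_2=0$, $\beta^{S_2/IS_2}_0(M_2)=\beta^{S_2}_0(M_2)=m$, and likewise $\beta^{S_2/IS_2}_0((\n_2/IS_2)M_2)=n$. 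Using $\omega_{S_2/IS_2}\cong S_2/IS_2\otimes_{S_2}\omega_{S_2}$ with $\tor^{S_2}_i(S_2/IS_2,\omega_{S_2})=0$ for $i\geq 1$, the same base-change identification as above shows $M_2\in\cata(S_2/IS_2)$. Thus $\ol{\vf_2}$ and $M_2$ exhibit $R/I$ as strongly homologically of minimal multiplicity of type $(m,n)$.

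The main obstacle is verifying the base-change stability of the Auslander class in both directions — that $M\in\cata(S)\Leftrightarrow M\in\cata(R')$ when $S$ has finite projective (or flat) dimension over $R'$ with $\omega_S\cong S\otimes_{R'}\omega_{R'}$, and the analogous statement along the flat surjection $S_2\to S_2/IS_2$. This is where one must check that the isomorphism $\xi_M$ and the vanishing of $\tor_i(\omega,M)$ and $\ext^i(\omega,\omega\otimes M)$ are all preserved; the Tor part is exactly~\eqref{disc11d3a}, and the $\xi_M$ and Ext parts follow from the same flat/perfect base-change bookkeeping, but they deserve to be spelled out (or cited from the standard theory of Auslander classes under Gorenstein/quasi-Gorenstein maps, e.g.\ \cite{avramov:rhafgd}) rather than asserted.
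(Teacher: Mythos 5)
Your proposal is correct and takes essentially the same approach as the paper, whose proof of this lemma is the single line ``This is proved like Lemma~\ref{lem42}.'' You have accurately unpacked what that means — replace Tor-vanishing by Auslander-class membership throughout and invoke Theorem~\ref{disc11dx} in place of Theorem~\ref{disc11d} — and you correctly identify the one nontrivial extra ingredient, namely that membership in $\cata(-)$ transfers along a quasi-Gorenstein surjection; this is a standard fact about Auslander classes of finite G-dimension/quasi-Gorenstein homomorphisms (see \cite{avramov:rhafgd} and \cite{christensen:scatac}) and is indeed worth citing explicitly rather than reproving.
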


\begin{proof}
This is proved like Lemma~\ref{lem42}.
\end{proof}

The next question asks if the converses in Lemmas~\ref{lem42}
and~\ref{lem42x} hold without the assumption $I\subseteq\m^2$.

\begin{question}
Let $I\subset R$ be a quasi-Gorenstein ideal. 
If  $R$
is (strongly) homologically of minimal multiplicity,
must $R/I$ be (strongly) homologically of minimal multiplicity?
\end{question}

Before continuing toward our general results on ascent and descent,
we note a few special cases of Lemmas~\ref{lem42} and~\ref{lem42x}.

\begin{ex} \label{ex41}
If $(R,\m)$ is a local Cohen-Macaulay ring of 
minimal multiplicity and $I\subseteq \m^2$ is a quasi-Gorenstein ideal,
then $R/I$ is strongly homologically of minimal multiplicity;
see Proposition~\ref{disc11a}.

If $(S,\n)$ is a Cohen-Macaulay local ring and $\x\in\n^2$ is an $S$-regular sequence,
then $S$ is (strongly) homologically of minimal multiplicity if and only if
$S/(\x)S$ is (strongly) homologically of minimal multiplicity.
If $S$ has minimal multiplicity and $(\x)S\neq 0$, then
$S/(\x)S$ is strongly homologically of minimal multiplicity,
but is not of minimal multiplicity.
\end{ex}

Example~\ref{ex41} gives a method for constructing rings that
are strongly homologically of minimal multiplicity. The next question asks whether this is 
essentially the only way. In other words, it asks whether there is a structure
theorem for rings that are strongly homologically of minimal multiplicity
akin to Cohen's structure theorem, where regular rings are replaced by rings 
of minimal multiplicity.

\begin{question}
If $R$ is strongly homologically of minimal multiplicity,
must there be an isomorphism $\comp R\cong Q/I$
where $Q$ is a local Cohen-Macaulay ring of minimal multiplicity and
$I\subset Q$ is a quasi-Gorenstein ideal?
\end{question}

\begin{disc} 
The ring $R$ from Example~\ref{blah} is strongly homologically of minimal multiplicity,
but is not of minimal multiplicity.
Furthermore, there does not exist a local ring $(Q,\fr)$ with a $Q$-regular
sequence $\x\in\fr^2$ such that $R\cong Q/(\x)Q$;
see~\cite[(3.10)]{gasharov:bpclr}. It follows that there does not
exist a local Cohen-Macaulay ring of minimal multiplicity $(Q,\fr)$ with a $Q$-regular
sequence $\x\in\fr$ such that $R\cong Q/(\x)Q$. However, at this time, we do not
know if there exist a local Cohen-Macaulay ring of minimal multiplicity $(Q,\fr)$ with a 
quasi-Gorenstein ideal $I\subset Q$
such that $R\cong Q/I$. 
\end{disc}

The next two results contain Theorem~\ref{thm02} from the introduction.

\begin{thm}\label{lem43}
Assume that $\psi\colon (R,\m,k)\to (R',\m',k')$ is a 
flat local ring
homomorphism with Gorenstein closed fibre $R'/\m R'$.
If  $R'$
is homologically of minimal multiplicity of type $(m,n,t)$,
then $R$ is homologically of minimal multiplicity of type $(m,n,t)$.
The converse holds when $k$ is perfect and  $R'/\m R'$
is regular.
\end{thm}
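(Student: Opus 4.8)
The plan is to prove the two implications of Theorem~\ref{lem43} separately, leaning heavily on the completion-invariance from Lemma~\ref{lem41}\eqref{lem41a} and the transitivity of the relevant homological conditions established in Remark~\ref{disc41'} and Remark~\ref{disc51}.

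\textbf{Ascent (if $R'$ is homologically of minimal multiplicity, then so is $R$).} First I would invoke Lemma~\ref{lem41}\eqref{lem41a} to replace $R$ by its $\m$-adic completion $\comp R$; since $R'$ is flat over $R$ with $R'/\m R'$ Gorenstein, one checks that $\comp R\to\comp{R'}$ is again flat with Gorenstein closed fibre, and by Lemma~\ref{lem41}\eqref{lem41a} applied to $R'$ we may likewise assume $R'$ is complete. Now apply the definition of ``homologically of minimal multiplicity of type $(m,n,t)$'' to $R'$: there is a local homomorphism $\vf'\colon R'\to S$ and an $S$-module $M\neq 0$ satisfying Definition~\ref{defn01}(1)--(4). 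I claim that the composition $\vf'\psi\colon R\to S$ together with the same module $M$ witnesses that $R$ is homologically of minimal multiplicity of type $(m,n,t)$. The only nontrivial point is that $\vf'\psi$ is flat with Gorenstein closed fibre: it is flat as a composition of flat maps, and its closed fibre $S/\m S$ sits in a flat local extension $R'/\m R'\to S/\m S$ whose fibre is the Gorenstein ring $S/\m'S$; since $R'/\m R'$ is Gorenstein, the extension theorem for Gorenstein-ness under flat maps (Remark~\ref{disc41'}, citing~\cite[(4.2)]{avramov:lgh}) gives that $S/\m S$ is Gorenstein. Conditions (1), (3), (4) of Definition~\ref{defn01} are about $S$ and $M$ alone and are unchanged. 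This is essentially the argument already used in the proof of Lemma~\ref{lem41}\eqref{lem41a}, Step 1.

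\textbf{Descent (if $R$ is homologically of minimal multiplicity and $k$ is perfect, $R'/\m R'$ regular, then so is $R'$).} By Lemma~\ref{lem41}\eqref{lem41a} assume $R$ is complete, hence so is $R'$ (it is finite-type over a complete ring modulo the $\m'$-adic topology... more precisely, use again Lemma~\ref{lem41}\eqref{lem41a} on $R'$). Apply Theorem~\ref{disc11d} to $R$: there is $\vf\colon R\to S$ flat with \emph{regular} closed fibre, $S$ complete Cohen-Macaulay with canonical module and algebraically closed residue field, and $M\neq 0$ with $\tor^S_i(\omega_S,M)=0$ for $i\geq t$ and $\n^2M=0$, $m=\beta^S_0(M)$, $n=\beta^S_0(\n M)$. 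The strategy is to push this data up along $\psi$ by forming the ring $S' = $ a suitable completion of $S\otimes_R R'$ and the module $M' = S'\otimes_S M$. The key homological facts to verify are: (i) the natural map $R'\to S'$ is flat with Gorenstein (in fact one wants to recover a \emph{regular} closed fibre for Theorem~\ref{disc11d}, or just Gorenstein for Definition~\ref{defn01}) closed fibre — here is where perfection of $k$ and regularity of $R'/\m R'$ enter, since one needs $S'/\m' S'$, which is a base change of the regular ring $S/\m S$ along the regular (because $k$ perfect) extension $k\to k'$, to remain regular; (ii) $\omega_{S'}\cong S'\otimes_S\omega_S$ and $\tor^S_i(S',\omega_S)=0$ for $i\geq 1$, giving $\tor^{S'}_i(\omega_{S'},M')\cong\tor^S_i(\omega_S,M)=0$ for $i\geq t$ by the base-change isomorphism as in~\eqref{disc11d3a}; and (iii) $(\n')^2M'=0$ with $m=\beta^{S'}_0(M')$ and $n=\beta^{S'}_0(\n' M')$, which follows since $S\to S'$ is flat local. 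Then $R'\to S'$ and $M'$ witness that $R'$ is homologically of minimal multiplicity of type $(m,n,t)$.

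\textbf{The main obstacle} is making the ``push up'' in the descent direction rigorous: $S\otimes_R R'$ need not be Noetherian or local, so one must pass to an appropriate localization-completion to get a complete local ring $S'$ with residue field $k'\otimes_k l$ (or its algebraic closure) while preserving flatness of $R'\to S'$, the canonical-module base-change, and the regularity of the closed fibre. The cleanest route is probably to first reduce (via Remark~\ref{disc15} and Lemma~\ref{lem41}) to the case where $k'$ is algebraically closed and $R'/\m R'$ is a power series ring over $k'$, so that $R'\cong R\,\widehat\otimes_k\,(k' [\![x_1,\dots,x_d]\!])$ up to completion and the desired $S'$ is simply the analogous completed tensor/power-series construction over $S$; then perfection of $k$ guarantees $k\to k'$ is separable, so all the fibres stay regular, and the needed $\tor$-vanishing is inherited because power-series and residue-field extensions are flat. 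I would also double-check that Remark~\ref{disc51}'s transitivity of quasi-Gorenstein maps (or directly~\cite[(4.6)]{avramov:lgh}) can be used to streamline the verification that $R'\to S'$ is quasi-Gorenstein, and then appeal to Theorem~\ref{thm51} rather than Definition~\ref{defn01} to conclude — this avoids having to arrange a literal flat-with-Gorenstein-fibre factorization and is probably the shortest path. The ``strongly'' version (Theorem~\ref{lem43x}) is then handled identically, replacing the $\tor$-vanishing hypotheses on $(\omega_S,M)$ by the condition $M\in\cata(S)$, which is also preserved under the flat base change $S\to S'$ because Auslander-class membership is detected by the same $\tor$ and $\ext$ vanishing together with an isomorphism that base-changes faithfully flatly.
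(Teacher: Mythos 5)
Your ascent argument is essentially the paper's: compose $\vf_1\psi\colon R\to S_1$, note it is flat, and invoke \cite[(4.6)]{avramov:lgh} (Remark~\ref{disc41'}) to see it is Gorenstein since $\vf_1\psi$ is the composition of two Gorenstein maps of finite flat dimension. The preliminary completion of $R$ and $R'$ that you insert is unnecessary and introduces its own subtlety (one must verify that the induced map on completions is again flat with Gorenstein fibre), but the core idea is correct and matches the paper.

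Your descent argument, however, takes a genuinely different route and has a real gap. You propose to build $S'$ as a localized completion of $S\otimes_R R'$ and then base-change $M$ along $S\to S'$. The paper instead works with two Cohen factorizations of the composite map to a large residue field $k''$ and invokes the comparison/lifting lemma for Cohen factorizations, \cite[(1.7)]{avramov:solh}, to produce the needed map $\phi\colon R''\to S_3$; this is precisely where perfectness of $k$ (hence separability of $k\to k''$) is used, and the positive-dimensional fibre case is then handled separately by modding out a regular system of parameters and applying Lemma~\ref{lem42}. Your sketch sidesteps the lifting lemma, which is the paper's key ingredient, and the points you flag as "the main obstacle" are not minor: (i) the claimed isomorphism $R'\cong R\,\widehat\otimes_k\,(k'[\![x_1,\dots,x_d]\!])$ is not established and is false as written in mixed characteristic (where $R$ is not a $k$-algebra); (ii) $S\otimes_R R'$ is in general neither Noetherian nor local — $l\otimes_k k'$ already has nontrivial idempotents once $k$ has a proper finite extension inside both $l$ and $k'$ — so choosing a maximal ideal, localizing, and completing must be carried out and shown compatible with the canonical-module and $\tor$-vanishing base-change identities; and (iii) verifying that the resulting $R'\to S'$ is flat with Gorenstein closed fibre, without which Theorem~\ref{thm51} cannot be applied, depends on (i) and (ii). Until these are resolved the descent direction is incomplete; the cleanest fix is the paper's own: reduce to the zero-dimensional-fibre case via Lemma~\ref{lem42}, then compare Cohen factorizations using \cite[(1.7)]{avramov:solh}.
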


\begin{proof}
Assume first that $R'$ is homologically of minimal multiplicity of type $(m,n,t)$
and let $\vf_1\colon R'\to S_1$ be a ring homomorphism and $M_1$ an $S_1$-module
as in Definition~\ref{defn01}. 
The composition
$\vf_1\psi\colon R\to S_1$ 
is flat, and 
Remark~\ref{disc41'} implies  that 
it is Gorenstein. 
It follows readily that this map, with the $S_1$-module $M_1$, satisfies the axioms to show that 
$R$ is homologically of minimal multiplicity of type $(m,n,t)$.

Assume next that $R$ is homologically of minimal multiplicity of type $(m,n,t)$.
Assume further that $k$ is perfect and  $R'/\m R'$
is regular. 
We prove that $R'$ is homologically of minimal multiplicity of type $(m,n,t)$
in two cases.

Case 1: The closed fibre $R'/\m R'$ is a field.
Let $\vf_2\colon (R,\m,k)\to (S_2,\n_2,l_2)$ 
be a ring homomorphism and $M_2$ an $S_2$-module
as in Theorem~\ref{disc11d}.
Since $k'$ and $l_2$ are extension fields of $k$, their join $k''$ 
fits in a commutative diagram of field extensions
$$\xymatrix{k\ar[r]^-{\ol\psi}\ar[d]_{\ol{\vf_2}} & k' \ar[d]^{\alpha_0^{}} \\ l_2 \ar[r]^-{\beta_0^{}} & k''}$$
where $\ol\psi$ and $\ol{\vf_2}$ are induced by $\psi$ and $\vf_2$.
Remark~\ref{disc15} provides flat local ring homomorphisms
$\alpha\colon(R',\m',k')\to(R'',\m'R'',k'')$
and $\beta\colon (S_2,\n_2,l_2)\to (S_3,\n_2S_3,k'')$ 
such that $R''$ and $S_3$ are complete, the map
$k'\to k''$ induced by $\alpha$ is precisely $\alpha_0$, and the map
$l_2\to k''$ induced by $\beta$ is precisely $\beta_0$.

Let $\tau\colon R''\to k''$ and $\pi\colon S_3\to k''$ denote the 
natural surjections. It follows that the small quadrilaterals in the following diagram commute:
$$\xymatrix{
R \ar[rrr]^-{\psi} \ar[ddd]_-{\vf_2}\ar[rd]
&&& R' \ar[ld] \ar[dd]^-{\alpha} \\
& k \ar[r]^-{\ol\psi} \ar[d]_{\ol{\vf_2}} 
& k' \ar[d]^{\alpha_0} \\
& l_2\ar[r]^-{\beta_0} & k'' & R'' \ar[l]_{\tau} \\
S_2 \ar[ru]\ar[rr]^-{\beta} && S_3\ar[u]_-{\pi}
}$$
(The unspecified maps are the canonical surjections.)
It follows that $\tau\alpha\psi=\pi\beta\vf_2$.

Note that the composition $\beta\vf_2\colon R\to S_3$ is flat
because $\beta $ and $\vf_2$ are both flat.
Furthermore, the  closed fibre $S_3/\m S_3$ is regular.
(Indeed, the map $\ol \beta\colon S_2/\m S_2\to S_3/\m S_3$ induced by $\beta$
is flat because $\beta$ is flat. 
The closed fiber of $\ol \beta$ is $S_3/\n_2S_3=k''$, which is regular.
By assumption, the ring $S_2/\m S_2$ is also regular, and thus $S_3/\m S_3$ is regular.)
It follows that the diagram $R\xra{\beta\vf_2}S_3\xra\pi k''$ is a Cohen factorization
of the map $\pi\beta\vf_2$.
Also, since $\n_2S_3$ is the maximal ideal of $S_3$,
its square annihilates the module $M_3=S_3\otimes_{S_2}M_2$, because
$\n_2^2M_2=0$. The canonical module of $S_3$ is 
$\omega_{S_3}\cong S_3\otimes_{S_2}\omega_{S_2}$,
since $\beta$ is flat with Gorenstein closed fibre, and it follows that
$$\tor^{S_3}_i(\omega_{S_3},M_3)\cong S_3\otimes_{S_2}\tor^{S_2}_i(\omega_{S_2},M_2)=0$$
for all $i\geq t$. In particular, the map  $\beta\vf_2\colon R\to S_3$ and $S_3$-module $M_3$ satisfy the
conditions of Definition~\ref{defn01}.

Similarly, the composition $\alpha\psi\colon R\to R''$ is flat
with regular closed fibre, and
the diagram $R\xra{\alpha\psi}R''\xra\tau k''$ is a Cohen factorization
of the map $\tau\alpha\psi$. 
The diagram $R\xra{\beta\vf_2}S_3\xra\pi k''$ is also a Cohen factorization
$\tau\alpha\psi$. 
Since the field $k$ is perfect, the extension $k\to k''$ is separable,
and it follows from~\cite[(1.7)]{avramov:solh} that there is a local ring homomorphism
$\phi\colon R''\to S_3$ making the following diagram commute:
$$\xymatrix{
R\ar[r]^-{\alpha\psi} \ar[d]_-{\beta\vf_2} & R'' \ar[d]^-{\tau}\ar[ld]_-{\phi} \\
S_3 \ar[r]^-{\pi} & k''
}$$
We claim that $\phi$ is flat. To show this, we show that
$\tor^{R''}_i(S_3,k'')=0$ for all $i\geq 1$.
Let $F$ be a free resolution of $k$ over $R$. Since $R''$ is flat over $R$,
the complex $R''\otimes_RP$ is a free resolution of 
$R''\otimes_Rk\cong k''$ over $R''$. It follows that
$$\tor^{R''}_i(S_3,k'')
\cong\HH_i(S_3\otimes_{R''}(R''\otimes_RP))
\cong\HH_i(S_3\otimes_RP)
\cong\tor^R_i(S_3,k)=0$$
for $i\geq 1$; the vanishing comes from the fact that $S_3$ is flat over $R$.

Our assumption that $R'/\m R'$ is a field implies that
the maximal ideal of $R''$ is $\m'R''=\m R''$. Thus, the closed fibre of $\phi$ is
$S_3/\m' S_3=S_3/\m S_3$, which is regular. Hence, the map $\phi\colon R''\to S_3$
with the $S_3$-module $M_3$ satisfies the conditions of Definition~\ref{defn01},
showing that $R''$ is homologically of minimal multiplicity of type $(m,n,t)$.
The local homomorphism $\alpha\colon R'\to R''$ is flat, so the descent result
(established in the first paragraph of this proof) shows that $R'$
is homologically of minimal multiplicity of type $(m,n,t)$.
This completes the proof in this case.

Case 2: the general case.
Let $\x=x_1,\ldots,x_n\in\m'$ be a sequence of elements whose residues modulo
$\m R'$ form a regular system of parameters for the regular ring $R'/\m R'$. 
According to~\cite[Cor.\ of (22.5)]{matsumura:crt}, the sequence
$\x$ is $R'$-regular, and the quotient $R'/\x R'$ is flat as an $R$-module.
Furthermore, the closed fibre of the induced map $R\to R'/\x R'$ is
$R'/(\x R'+\m R')\cong k'$. Since
$R$ is homologically of minimal multiplicity of type $(m,n,t)$,
Case 1 of our proof shows that $R'/\x R'$
is homologically of minimal multiplicity of type $(m,n,t)$.
Since the sequence $\x$ is $R'$-regular,
the descent result in Lemma~\ref{lem42} implies that $R'$
is homologically of minimal multiplicity of type $(m,n,t)$.
\end{proof}

\begin{thm}\label{lem43x}
Assume that $\psi\colon (R,\m,k)\to (R',\m',k')$ is a 
flat local ring
homomorphism with  $R'/\m R'$ Gorenstein.
If  $R'$
is strongly homologically of minimal multiplicity of type $(m,n)$,
then $R$ is strongly homologically of minimal multiplicity of type $(m,n)$.
The converse holds when $k$ is perfect and  $R'/\m R'$
is regular.
\end{thm}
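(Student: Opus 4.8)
The plan is to run the proof of Theorem~\ref{lem43} essentially verbatim, making only the structural substitution of the Auslander-class condition $M\in\cata(S)$ for the Tor-vanishing condition $\tor^S_i(\omega_S,M)=0$ for $i\geq t$ throughout, and invoking Theorem~\ref{disc11dx} and Lemma~\ref{lem42x} in place of Theorem~\ref{disc11d} and Lemma~\ref{lem42}. The one genuinely new point is that the Auslander class transports correctly along the two kinds of maps that occur in that argument: along a flat local homomorphism $\beta\colon (S,\n)\to(S',\n')$ with $\omega_{S'}\cong S'\otimes_S\omega_S$ one has $M\in\cata(S)\Rightarrow S'\otimes_SM\in\cata(S')$, and along a surjective local homomorphism $\rho\colon(S,\n)\to(S_2,\n_2)$ with Gorenstein kernel (so that $\omega_{S_2}\cong S_2\otimes_S\omega_S$ and $\tor^S_i(S_2,\omega_S)=0$ for $i\geq 1$, cf.\ Remark~\ref{disc41'}) one has $M\in\cata(S_2)\Rightarrow M\in\cata(S)$ for any $S_2$-module $M$. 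I would record these as a preliminary lemma; each follows from the cited compatibility of canonical modules together with flat, resp.\ adjoint, base change for $\tor$, $\ext$ and $\Hom$, and can also be extracted from~\cite{christensen:scatac}. They are the $\cata$-analogues of the Tor-computations already carried out in the proofs of Theorem~\ref{disc11d} and Lemma~\ref{lem42}.

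For the descent direction, suppose $R'$ is strongly homologically of minimal multiplicity of type $(m,n)$, witnessed by $\vf_1\colon R'\to S_1$ and $M_1\in\cata(S_1)$ as in Definition~\ref{defn01}. The composite $\vf_1\psi\colon R\to S_1$ is flat, being a composite of flat maps, and it is Gorenstein by Remark~\ref{disc41'} since $\psi$ and $\vf_1$ are flat with Gorenstein closed fibre; hence $\vf_1\psi$ is flat with Gorenstein closed fibre $S_1/\m S_1$. Since the Auslander class $\cata(S_1)$ depends only on $S_1$, the module $M_1$ still lies in it, and the conditions $\n_1^2M_1=0$, $m=\beta^{S_1}_0(M_1)$, $n=\beta^{S_1}_0(\n_1M_1)$ are unchanged, so $\vf_1\psi$ together with $M_1$ witnesses that $R$ is strongly homologically of minimal multiplicity of type $(m,n)$.

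For the converse, assume $k$ is perfect and $R'/\m R'$ is regular. In Case~1, where $R'/\m R'$ is a field, I would start from $\vf_2\colon R\to S_2$ and $M_2\in\cata(S_2)$ supplied by Theorem~\ref{disc11dx} (so $S_2$ is complete and Cohen-Macaulay with algebraically closed residue field) and build, exactly as in the proof of Theorem~\ref{lem43}, the commutative square of field extensions, the flat homomorphisms $\alpha\colon R'\to R''$ and $\beta\colon S_2\to S_3$ to complete rings furnished by Remark~\ref{disc15}, the two Cohen factorizations of a common map $R\to k''$, and---using that $k$ is perfect so that $k\to k''$ is separable, via~\cite[(1.7)]{avramov:solh}---the comparison map $\phi\colon R''\to S_3$, which is flat with regular closed fibre. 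The only change from the Tor-version is that one now observes $M_3:=S_3\otimes_{S_2}M_2\in\cata(S_3)$ by the preliminary lemma (using $\omega_{S_3}\cong S_3\otimes_{S_2}\omega_{S_2}$, valid since $\beta$ is flat with Gorenstein closed fibre), so that $\phi$ and $M_3$ exhibit $R''$ as strongly homologically of minimal multiplicity of type $(m,n)$; applying the already-proved descent direction to the flat homomorphism $\alpha\colon R'\to R''$ then gives the conclusion for $R'$. Case~2 reduces to Case~1: choose $\x=x_1,\dots,x_n\in\m'$ whose residues modulo $\m R'$ form a regular system of parameters for $R'/\m R'$; then $\x$ is $R'$-regular, $R'/\x R'$ is flat over $R$ with closed fibre $k'$, so Case~1 shows $R'/\x R'$ is strongly homologically of minimal multiplicity of type $(m,n)$, and since $\x R'$ is a Gorenstein, hence quasi-Gorenstein, ideal of $R'$, the descent part of Lemma~\ref{lem42x} lifts this to $R'$.

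The main obstacle is the preliminary lemma on base change for the Auslander class: one must verify that the natural map $\xi_M$ of Definition~\ref{defn11} and the vanishing of $\tor^S_i(\omega_S,M)$ and $\ext^i_S(\omega_S,\omega_S\otimes_SM)$ are preserved under the two base changes above (and reflected when the flat base change is faithfully flat, which is what legitimizes descending the property from $R''$ back to $R'$). Once that is in hand, every remaining step is a transcription of the corresponding step in the proof of Theorem~\ref{lem43}.
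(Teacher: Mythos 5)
Your proposal is correct and follows exactly the route the paper intends: the paper's entire proof reads ``The proof is similar to that for Theorem~\ref{lem43}'', and you have simply fleshed that out, correctly isolating the one genuinely new ingredient — namely that membership in the Auslander class transports along a flat local homomorphism $\beta$ with $\omega_{S'}\cong S'\otimes_S\omega_S$, and along the surjective piece $\vf'$ of a Cohen factorization of a quasi-Gorenstein map (so that $M_1\in\cata(S_1)\Rightarrow M_1\in\cata(R')$, the $\cata$-analogue of the change-of-rings computation~\eqref{disc11d3a}). One small remark: your closing parenthetical about needing the Auslander class to be \emph{reflected} under faithfully flat base change in order to descend from $R''$ to $R'$ is unnecessary — that step uses the already-established easy descent direction along the flat homomorphism $\alpha\colon R'\to R''$, which leaves the witness $(S_3,M_3)$ untouched, so no reflection of $\cata$-membership is invoked there.
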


\begin{proof}
The proof is similar to that for Theorem~\ref{lem43}.
\end{proof}

The next question asks if the converses in Theorems~\ref{lem43}
and~\ref{lem43x} hold in general.

\begin{question}
Assume that $\psi\colon (R,\m,k)\to (R',\m',k')$ is a 
flat local ring
homomorphism with Gorenstein closed fibre $R'/\m R'$.
If  $R$
is (strongly) homologically of minimal multiplicity,
must $R'$ be (strongly) homologically of minimal multiplicity?
\end{question}

The next results contain criteria guaranteeing that a 
localized tensor product is (strongly) homologically of minimal multiplicity.

\begin{cor}\label{prop41'}
Let  $(R,\m,k)$ and $(R_1,\m_1,k_1)$ be local $k$-algebras
such that $R_1$ is Gorenstein and $R\otimes_k R_1$ is noetherian. 
Set $P=R\otimes_k\m_1+\m\otimes_k R_1$ and
set $R'=(R\otimes_k R_1)_P$ with maximal ideal $\m'=PR'$.
If $R'$ is homologically of minimal multiplicity of type $(m,n,t)$,
then $R$ is homologically of minimal multiplicity of type $(m,n,t)$. 
The converse holds when $k$ is perfect and $R_1$ is regular.
\end{cor}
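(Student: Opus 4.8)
The plan is to recognize the natural map $\psi\colon R\to R'$ as a flat local homomorphism with closed fibre $R_1$, and then to quote Theorem~\ref{lem43}. First I would note that the surjection $R\otimes_k R_1\to (R/\m)\otimes_k(R_1/\m_1)\cong k_1$ has kernel exactly $P$, so $P$ is a maximal ideal of the noetherian ring $R\otimes_k R_1$; hence $R'=(R\otimes_k R_1)_P$ is a noetherian local ring with maximal ideal $\m'=PR'$ and residue field $k_1$. The composite $R\to R\otimes_k R_1\to R'$ sending $r\mapsto r\otimes 1$ is a local homomorphism $\psi$, since $\psi^{-1}(\m')$ is the kernel of $R\to R\otimes_k R_1\to k_1$, which equals the kernel of $R\to R/\m=k\hookrightarrow k_1$, namely $\m$.

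Next I would verify the two hypotheses of Theorem~\ref{lem43}. For flatness, $R_1$ is free as a $k$-module, so $R\otimes_k R_1$ is flat over $R$ by base change, and localization preserves flatness, so $\psi$ is flat. For the closed fibre, localization commutes with the base change $-\otimes_R R/\m$, giving
\[
R'/\m R'\cong\big((R/\m)\otimes_k R_1\big)_{\overline P}\cong (R_1)_{\m_1}=R_1,
\]
where $\overline P$ denotes the image of $P$ in $(R/\m)\otimes_k R_1\cong R_1$, which is $\m_1$, and the last equality holds because $R_1$ is already local. Thus $R'/\m R'\cong R_1$ is Gorenstein by hypothesis, and it is regular exactly when $R_1$ is regular.

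Granting these identifications, the corollary follows at once from Theorem~\ref{lem43}: the descent half shows that $R$ is homologically of minimal multiplicity of type $(m,n,t)$ as soon as $R'$ is; and when $k$ is perfect and $R_1$ --- equivalently the closed fibre $R'/\m R'$ --- is regular, the ascent half of Theorem~\ref{lem43} supplies the converse. The only content is the bookkeeping above, and the single point deserving care is the closed-fibre computation, specifically that localizing $(R/\m)\otimes_k R_1\cong R_1$ at the image of $P$ returns $R_1$ unchanged because that image is the maximal ideal $\m_1$; beyond this I expect no real obstacle.
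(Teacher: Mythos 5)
Your proof is correct and follows exactly the paper's route: identify the natural map $R\to R'$ as a flat local homomorphism with closed fibre $R'/\m R'\cong R_1$ and then invoke Theorem~\ref{lem43} for both the descent and the (perfect-field, regular-fibre) ascent. The paper's proof is terser but identical in substance; your extra bookkeeping (that $P$ is maximal, that $\psi$ is local, and that localizing $k\otimes_k R_1\cong R_1$ at $\overline P=\m_1$ returns $R_1$) is just what the paper leaves to the reader.
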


\begin{proof}
The natural map $R\to R'$ is flat and local with closed fibre
$$R'/\m R'\cong R/\m\otimes_kR_1\cong k\otimes_kR_1\cong R_1.
$$
The desired conclusions now follow from Theorem~\ref{lem43}.
\end{proof}

\begin{cor}\label{prop41'x}
Let  $(R,\m,k)$ and $(R_1,\m_1,k_1)$ be local $k$-algebras
such that $R_1$ is Gorenstein and $R\otimes_k R_1$ is noetherian. 
Set $P=R\otimes_k\m_1+\m\otimes_k R_1$ and 
set $R'=(R\otimes_k R_1)_P$ with maximal ideal $\m'=PR'$.
If $R'$ is strongly homologically of minimal multiplicity of type $(m,n)$,
then $R$ is strongly homologically of minimal multiplicity of type $(m,n)$. 
The converse holds when $k$ is perfect and $R_1$ is regular.
\end{cor}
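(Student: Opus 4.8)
The plan is to mimic exactly the proof of Corollary~\ref{prop41'}, replacing the appeal to Theorem~\ref{lem43} with the corresponding appeal to Theorem~\ref{lem43x}. The point is that the only input to Corollary~\ref{prop41'} is the identification of the natural map $R\to R'$ as a flat local homomorphism whose closed fibre is (regular or) Gorenstein, together with the ascent/descent statement for such maps; both ingredients are available here because Theorem~\ref{lem43x} is the ``strong'' analogue of Theorem~\ref{lem43}.

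First I would record that the structural map $R\to R\otimes_kR_1$ localizes to a local ring homomorphism $\rho\colon R\to R'=(R\otimes_kR_1)_P$, since $P=R\otimes_k\m_1+\m\otimes_kR_1$ is the relevant maximal ideal and $\m'=PR'$. Next I would check that $\rho$ is flat: the base change $R\to R\otimes_kR_1$ is flat because $R_1$ is flat (indeed free) over the field $k$, and localization is flat, so the composite is flat. Then I would compute the closed fibre: there are isomorphisms
$$R'/\m R'\cong (R/\m)\otimes_kR_1\cong k\otimes_kR_1\cong R_1,$$
using that inverting the image of $P$ becomes trivial after killing $\m$ since $R_1$ is already local with maximal ideal $\m_1$. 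Thus the closed fibre of $\rho$ is $R_1$, which is Gorenstein by hypothesis, and which is moreover regular in the converse situation.

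With $\rho\colon R\to R'$ now exhibited as a flat local homomorphism with Gorenstein closed fibre $R'/\m R'\cong R_1$, Theorem~\ref{lem43x} applies directly: if $R'$ is strongly homologically of minimal multiplicity of type $(m,n)$, then so is $R$. For the converse, assuming in addition that $k$ is perfect and $R_1$ is regular, the closed fibre $R'/\m R'\cong R_1$ is regular, so the converse half of Theorem~\ref{lem43x} gives that $R$ strongly homologically of minimal multiplicity of type $(m,n)$ forces the same for $R'$. I do not anticipate any real obstacle here; the only points that warrant a sentence of care are the noetherian hypothesis on $R\otimes_kR_1$ (needed so that $R'$ is a local noetherian ring to which the earlier theory applies) and the bookkeeping in the closed-fibre computation, both of which are routine. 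In a written version this corollary would simply be proved in two or three lines by citing Theorem~\ref{lem43x} exactly as Corollary~\ref{prop41'} cites Theorem~\ref{lem43}.
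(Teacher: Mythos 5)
Your proposal is correct and matches the paper's own argument exactly: the paper proves Corollary~\ref{prop41'x} ``similarly to Corollary~\ref{prop41'},'' i.e., by exhibiting $R\to R'$ as a flat local map with closed fibre $R'/\m R'\cong R_1$ and citing Theorem~\ref{lem43x} in place of Theorem~\ref{lem43}. Nothing to add.
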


\begin{proof}
This is proved similarly to Corollary~\ref{prop41'}.
\end{proof}

The next questions ask if the converses in Corollaries~\ref{prop41'}
and~\ref{prop41'x} hold when $k$ is not perfect or $R_1$ is not regular.

\begin{question}
Let  $(R,\m,k)$ and $(R_1,\m_1,k_1)$ be local $k$-algebras
such that the tensor product $R\otimes_k R_1$ is noetherian. 
Assume that $R_1$ is Gorenstein.
Set $P=R\otimes_k\m_1+\m\otimes_k R_1$, and 
set $R'=(R\otimes_k R_1)_P$ with maximal ideal $\m'=PR'$.
If $R$ is (strongly) homologically of minimal multiplicity,
must $R'$ be (strongly) homologically of minimal multiplicity?
\end{question}

Before continuing, we recall the following handy bookkeeping tool.

\begin{defn}
Given a finitely generated $R$-module $M$,
the \emph{Poincar\'e series} of $M$ is the formal power series
$P_M^R(t)=\sum_{i=0}^\infty\beta_i^R(M)t^i$.
\end{defn}

The following example shows that the local tensor product of two rings that
are strongly homologically of minimal multiplicity need not be homologically of
minimal multiplicity. It also shows that, given a flat local homomorphism
$R\to R'$, if $R$ and $R'/\m R'$ are 
strongly homologically of minimal multiplicity,
then $R'$ need not be homologically of
minimal multiplicity. 

\begin{ex} \label{ex9454}
Assume that $k$ is perfect. 
Set $R=k[X,Y]/(X,Y)^2$ and $R_1=k[Z,W]/(Z,W)^2$.
These are local artinian rings of minimal multiplicity, type 2 and length 3;
see Example~\ref{ex11}.
Hence they are strongly homologically of minimal multiplicity
of type $(1,2)$ by Proposition~\ref{disc11a}. 
Let $\omega$ and $\omega_1$ be canonical modules for $R$ and $R_1$;
their Poincar\'e series are given by the following formula:
\begin{equation} \label{ex9454a}
\textstyle
P^{R}_{\omega}(t)=2+3t\sum_{i=0}^\infty 2^it^i=P^{R_1}_{\omega_1}(t).
\end{equation}
The tensor product $R'=R\otimes_kR_1$ is local because
it is isomorphic to the local ring
$k[X,Y,Z,W]/(X,Y)^2+(Z,W)^2$. From~\cite[(2.5.1)]{jorgensen:jec}, we know that
the canonical module of $R'$ is $\omega'=\omega\otimes_k\omega_1$. Thus,
the K\"unneth formula explains the first equality in the next sequence
\begin{align} 
P^{R'}_{\omega'}(t) \notag
&=P^{R}_{\omega}(t)P^{R_1}_{\omega_1}(t) 
\textstyle
=\left(2+3t\sum_{i=0}^\infty 2^it^i\right)^2 \\
\label{ex9454b}
P^{R'}_{\omega'}(t)
&=\textstyle
4+12t+\sum_{i=2}^\infty (9i+15)2^{i-2}t^i.
\end{align}
The second equality is from equation~\eqref{ex9454a},
and the third one is straightforward.

We show that $R'$ is not homologically of minimal multiplicity.
It suffices to show that there are no integers $r$ and $t$ such that
$\beta^{R'}_{t+s}(\omega')=r^s\beta^{R'}_{t}(\omega')$ for all $s\geq 0$;
see Theorem~\ref{thm01}.
By way of contradiction, suppose that such integers $r$ and $t$ do exist.
Assume without loss of generality that $t\geq 2$.
The first two equalities in the next sequence follow directly,
and the third one is from equation~\eqref{ex9454b}:
\begin{align*}
\frac{\beta^{R'}_{t+1}(\omega')}{\beta^{R'}_{t}(\omega')}
&=r=\frac{\beta^{R'}_{t+2}(\omega')}{\beta^{R'}_{t+1}(\omega')} \\
\frac{[9(t+1)+15]2^{t-1}}{(9t+15)2^{t-2}}
&=\frac{[9(t+2)+15]2^{t}}{[9(t+1)+15]2^{t-1}}\\
[9(t+1)+15]^2&=(9t+15)[9(t+2)+15] \\
(9t+24)^2&=(9t+24)^2-81. 
\end{align*}
The remaining equalities are straightforward consequences;
the final one implies that $0=-81$, a contradiction.

Next, consider the natural map
$R\to R'$, which is flat and local with closed fibre
$R'/\m R\cong R_1$. In particular, the source $R$ and closed fibre
$R'/\m R'$ are strongly homologically of minimal multiplicity,
but the target $R'$ is not.
\end{ex}

The next results describe our most general ascent and descent properties.

\begin{cor}\label{prop41}
Assume that $\psi\colon R\to R'$ is a local, quasi-Gorenstein ring
homomorphism. 
If  $R'$
is homologically of minimal multiplicity of type $(m,n,t)$,
then $R$ is homologically of minimal multiplicity of type $(m,n,t)$.
The converse holds when the residue field $k$ is perfect and when the induced map
$\grave\psi\colon R\to \comp{R'}$ admits a Cohen factorization
$R\xra{\dot\psi}R''\xra{\psi'}\comp{R'}$ such that $\ker(\psi')$
is contained in the square of the maximal ideal of $R''$.
\end{cor}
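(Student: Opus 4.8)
The plan is to obtain both implications by splicing together the ascent/descent statements already proved for flat homomorphisms with Gorenstein closed fibre (Theorem~\ref{lem43}) and for quasi-Gorenstein ideals (Lemma~\ref{lem42}), using a Cohen factorization of the completed map $\grave\psi\colon R\to\comp{R'}$ as the connecting device; Lemma~\ref{lem41}\eqref{lem41a} will let us move freely between $R'$ and $\comp{R'}$. Two standing observations: $R$ is Cohen-Macaulay because $\psi$ is quasi-Gorenstein (Definition~\ref{defn51}); and $\grave\psi$ is quasi-Gorenstein exactly when $\psi$ is, since the completions of $\psi$ and of $\grave\psi$ coincide ($\comp{R'}$ being complete) and $\depth$ and Bass numbers are invariant under completion, so the two conditions of Definition~\ref{defn51} match up.

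For the descent statement, assume $R'$ is homologically of minimal multiplicity of type $(m,n,t)$, and let $\vf_1\colon R'\to S_1$ together with an $S_1$-module $M_1$ witness this as in Definition~\ref{defn01}. The ring $R'$ is Cohen-Macaulay (e.g.\ by Proposition~\ref{disc11b}), and $\vf_1$ is flat with Gorenstein closed fibre, hence Gorenstein and in particular quasi-Gorenstein by Remark~\ref{disc51}; therefore the composite $\vf_1\psi\colon R\to S_1$ is quasi-Gorenstein by the composition property in Remark~\ref{disc51}. Since $R$ is Cohen-Macaulay, $S_1$ has a canonical module, $\tor^{S_1}_i(\omega_{S_1},M_1)=0$ for $i\geq t$, and $\n_1^2M_1=0$ with $m=\beta^{S_1}_0(M_1)$ and $n=\beta^{S_1}_0(\n_1 M_1)$, the pair $(\vf_1\psi,M_1)$ satisfies the hypotheses of Theorem~\ref{thm51}, which gives at once that $R$ is homologically of minimal multiplicity of type $(m,n,t)$.

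For the converse, assume that $R$ is homologically of minimal multiplicity of type $(m,n,t)$, that $k$ is perfect, and fix a Cohen factorization $R\xra{\dot\psi}R''\xra{\psi'}\comp{R'}$ with $\ker(\psi')\subseteq(\m'')^2$, where $\m''$ denotes the maximal ideal of $R''$. The map $\dot\psi$ is flat with regular closed fibre, so, $k$ being perfect, the ascent part of Theorem~\ref{lem43} shows that $R''$ is homologically of minimal multiplicity of type $(m,n,t)$. On the other hand, $\grave\psi=\psi'\dot\psi$ is quasi-Gorenstein (by the standing observation) and $\dot\psi$ has finite flat dimension, so the cancellation property for quasi-Gorenstein homomorphisms from~\cite{avramov:rhafgd} shows that $\psi'$ is quasi-Gorenstein, i.e.\ $\ker(\psi')$ is a quasi-Gorenstein ideal of $R''$. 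Since moreover $\ker(\psi')\subseteq(\m'')^2$, the converse part of Lemma~\ref{lem42} applied to this ideal shows that $R''/\ker(\psi')\cong\comp{R'}$ is homologically of minimal multiplicity of type $(m,n,t)$, and Lemma~\ref{lem41}\eqref{lem41a} transfers this back to $R'$.

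Beyond the quoted results, no new homological computation enters; the proof is assembly along the Cohen factorization. The one point I expect to require care --- and hence the main obstacle --- is the verification that $\ker(\psi')$ is a quasi-Gorenstein ideal, that is, the implication ``$\psi'\dot\psi$ and $\dot\psi$ quasi-Gorenstein imply $\psi'$ quasi-Gorenstein''. This is the quasi-Gorenstein analogue of the Gorenstein cancellation recalled in Remark~\ref{disc41'}, and getting it cleanly forces one to keep track of the passage to completions and of which half of Lemma~\ref{lem42} actually uses the hypothesis $\ker(\psi')\subseteq(\m'')^2$.
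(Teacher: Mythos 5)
Your proof is correct, and the ascent (converse) direction follows essentially the same route the paper uses (and which the paper merely says is ``proved similarly''): ascend along the flat regular-closed-fibre piece via Theorem~\ref{lem43}, then descend along the surjection $\psi'$ via the converse part of Lemma~\ref{lem42}, then transfer across completion via Lemma~\ref{lem41}. The descent direction, however, is genuinely different from the paper's. The paper also proves descent by passing to $\comp{R'}$, invoking the Cohen factorization, and using the easy directions of Lemma~\ref{lem42} and Theorem~\ref{lem43} in sequence; you instead bypass the Cohen factorization entirely, noting that $\vf_1\psi$ is quasi-Gorenstein (composition of quasi-Gorenstein maps, Remark~\ref{disc51}) and then applying the characterization Theorem~\ref{thm51} directly. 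Your route is shorter and avoids one appeal to the Cohen factorization machinery; the paper's route is more symmetric with its converse direction.

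On your stated ``main obstacle'': the cancellation ``$\grave\psi=\psi'\dot\psi$ quasi-Gorenstein and $\dot\psi$ of finite flat dimension $\Rightarrow \psi'$ quasi-Gorenstein'' is indeed the precise point that makes $\ker(\psi')$ a quasi-Gorenstein ideal, and it is needed in the paper's proof of de\-scent as well (the paper's invocation of Lemma~\ref{lem42} silently requires it). It is a standard consequence of the (de)composition theorems in~\cite{avramov:rhafgd}, parallel to the Gorenstein version quoted in Remark~\ref{disc41'}. So your worry is well-placed as a point deserving a citation, but it does not expose a gap: it is exactly the same input the paper relies on.
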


\begin{proof}
Assume that $R'$
is homologically of minimal multiplicity of type $(m,n,t)$.
Lemma~\ref{lem41}\eqref{lem41a} implies that
$\comp{R'}$
is homologically of minimal multiplicity of type $(m,n,t)$.
Let $R\to R''\to\comp{R'}$ be a Cohen factorization of the induced map
$\grave\psi\colon R\to \comp{R'}$.
Lemma~\ref{lem42} implies that 
$R''$ is homologically of minimal multiplicity of type $(m,n,t)$,
and Theorem~\ref{lem43} yields the same conclusion for $R$.

The converse statement is proved similarly.
\end{proof}

\begin{cor}\label{prop41x}
Assume that $\psi\colon R\to R'$ is a local, quasi-Gorenstein ring
homomorphism. 
If  $R'$
is strongly homologically of minimal multiplicity of type $(m,n)$,
then $R$ is strongly homologically of minimal multiplicity of type $(m,n)$.
The converse holds when  the residue field $k$ is perfect and when the induced map
$\grave\psi\colon R\to \comp{R'}$ admits a Cohen factorization
$R\xra{\dot\psi}R''\xra{\psi'}\comp{R'}$ such that $\ker(\psi')$
is contained in the square of the maximal ideal of $R''$.
\end{cor}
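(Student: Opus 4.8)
The plan is to run the argument for Corollary~\ref{prop41} essentially verbatim, replacing each tool by its ``strongly'' counterpart: Lemma~\ref{lem41}\eqref{lem41b} for completions, Lemma~\ref{lem42x} for descent along a quasi-Gorenstein ideal, and Theorem~\ref{lem43x} for descent along a flat local map with Gorenstein closed fibre.

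For the descent direction, I would first assume $R'$ is strongly homologically of minimal multiplicity of type $(m,n)$ and invoke Lemma~\ref{lem41}\eqref{lem41b} (applied to the $\m'$-adic completion) to reduce to the statement for $\comp{R'}$. Writing $\grave\psi\colon R\to\comp{R'}$ for the composition of $\psi$ with the natural map $R'\to\comp{R'}$, I would use Remark~\ref{disc41} to fix a Cohen factorization $R\xra{\dot\psi}R''\xra{\psi'}\comp{R'}$. Since $\psi$, and hence $\grave\psi$, is quasi-Gorenstein, Remark~\ref{disc51} shows that $\ker(\psi')$ is a quasi-Gorenstein ideal of the (Cohen-Macaulay) ring $R''$; then Lemma~\ref{lem42x} gives that $R''$ is strongly homologically of minimal multiplicity of type $(m,n)$. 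Finally $\dot\psi$ is flat with regular, in particular Gorenstein, closed fibre, so the descent half of Theorem~\ref{lem43x} delivers the conclusion for $R$.

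For the converse, under the hypotheses that $k$ is perfect and that $\grave\psi$ admits a Cohen factorization $R\xra{\dot\psi}R''\xra{\psi'}\comp{R'}$ with $\ker(\psi')$ contained in the square of the maximal ideal of $R''$, I would run these three steps in reverse: the converse half of Theorem~\ref{lem43x} (whose hypotheses hold because $\dot\psi$ is flat with regular closed fibre and $k$ is perfect) gives that $R''$ is strongly homologically of minimal multiplicity of type $(m,n)$; the converse half of Lemma~\ref{lem42x} (applicable precisely because the quasi-Gorenstein ideal $\ker(\psi')$ lies in the square of the maximal ideal of $R''$) gives the same for $\comp{R'}\cong R''/\ker(\psi')$; and Lemma~\ref{lem41}\eqref{lem41b} transfers this to $R'$.

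I do not expect a genuine obstacle: all the content lives in Lemma~\ref{lem42x}, Theorem~\ref{lem43x}, and Remark~\ref{disc51}, which are already available, and the only point requiring care is that the two extra hypotheses in the converse statement are exactly what is needed to invoke the converse halves of those two results. Everything else is the routine ``strongly'' analogue of the proof of Corollary~\ref{prop41}, so the proof can reasonably be left to the reader with these pointers, as is done for the companion statements.
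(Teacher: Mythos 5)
Your proposal is correct and takes exactly the paper's approach: the paper proves this corollary by saying ``This is proved as in Corollary~\ref{prop41},'' and the proof of Corollary~\ref{prop41} is precisely the three-step reduction you describe (complete, Cohen-factor, descend along the surjection via the lemma on quasi-Gorenstein ideals, then descend along the flat map), with the non-``strongly'' versions of the same lemmas. Your substitutions of Lemma~\ref{lem41}\eqref{lem41b}, Lemma~\ref{lem42x}, and Theorem~\ref{lem43x} for their counterparts, and your observation that the two extra hypotheses in the converse match the converse halves of those results, are exactly what the paper intends.
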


\begin{proof}
This is proved as in Corollary~\ref{prop41}.
\end{proof}

We conclude with some natural questions.

\begin{question}
Assume that $\psi\colon R\to R'$ is a local, quasi-Gorenstein ring
homomorphism. 
If  $R$
is (strongly) homologically of minimal multiplicity,
must $R'$ be (strongly) homologically of minimal multiplicity?
\end{question}

\begin{question}
If  $R$
is (strongly) homologically of minimal multiplicity
and $\p$ is a prime ideal of $R$,
must the localization  $R_{\p}$ be (strongly) homologically of minimal multiplicity?
\end{question}

\section*{Acknowledgments}

We are grateful to 
Srikanth Iyengar, Graham Leuschke, Amelia Taylor, and Yuji Yoshino
for helpful discussions
about this work. We are grateful to the referee for thoughtful comments
about the manuscript.

\providecommand{\bysame}{\leavevmode\hbox to3em{\hrulefill}\thinspace}
\providecommand{\MR}{\relax\ifhmode\unskip\space\fi MR }
\providecommand{\MRhref}[2]{%
  \href{http://www.ams.org/mathscinet-getitem?mr=#1}{#2}
}
\providecommand{\href}[2]{#2}

\end{document}